%% file: main_arxiv.tex
\documentclass{article}

\usepackage[top=1in, bottom=1in, left=1in, right=1in]{geometry}

\usepackage[T1]{fontenc}
\usepackage{xcolor} 
\usepackage{mathtools} 
\usepackage{amsthm}
\usepackage{orcidlink}

\input{_macros}

\usepackage{authblk}
\usepackage{etoc}
\usepackage{wrapfig}
\usepackage{bm}
\allowdisplaybreaks
\newtheorem{informalresult}{Informal Result}

\newcommand\iidsim{\stackrel{\mathclap{i.i.d.}}{\sim}}

\makeatletter
\renewcommand{\paragraph}{%
  \@startsection{paragraph}{4}%
  {\z@}{1.5ex \@plus 1ex \@minus .2ex}{-1em}%
  {\normalfont\normalsize\bfseries}%
}
\makeatother

\title{BBP Phase Transition for a Doubly Sparse Deformed Model}

\author[1]{Ioana Dumitriu\thanks{Email: \texttt{idumitriu@ucsd.edu}}}
\author[1]{JD Flynn\thanks{Email: \texttt{jdflynn@ucsd.edu}}\orcidlink{0009-0007-5345-0135}}
\author[2]{Zhichao Wang\thanks{Email: \texttt{zhichao.wang@berkeley.edu}}}

\affil[1]{University of California, San Diego}
\affil[2]{University of California, Berkeley}

\date{} 

\begin{document}
\maketitle

\begin{abstract}
\input{sections/abstract}
\end{abstract}

\paragraph{Keywords: sparse PCA, sparse Wigner, deformed random matrix models, signal recovery}

\tableofcontents

\input{sections/introduction}

\input{sections/model}

\input{sections/main_results}

\input{sections/experiments}

\input{sections/future_directions}

\input{sections/preliminary}

\input{sections/distinguish}

\input{sections/recover}

\paragraph{Acknowledgments} This project was supported by the following National Science Foundation (NSF) grant: DMS-2154099. Special thanks to Yizhe Zhu and Yiyun He for their valuable suggestions and comments on Lemma \ref{lemma_2}.


\bibliographystyle{alpha} 
\bibliography{ref.bib}       


\appendix
\section{Appendix}
\input{sections/appendix}
\end{document}

%% file: _macros.tex
\usepackage{comment,url,algorithm,algorithmic,graphicx,subcaption,relsize}
\usepackage{amssymb,amsfonts,amsmath,amsthm,amscd,dsfont,mathrsfs,mathtools,nicefrac}
\usepackage{float,psfrag,epsfig,color,xcolor,url,hyperref}
\usepackage{epstopdf,bbm,enumitem}
\usepackage[toc,page]{appendix}
\usepackage[mathscr]{euscript}

\def\ddefloop#1{\ifx\ddefloop#1\else\ddef{#1}\expandafter\ddefloop\fi}

\def\ddef#1{\expandafter\def\csname bb#1\endcsname{\ensuremath{\mathbb{#1}}}}
\ddefloop ABCDEFGHIJKLMNOPQRSTUVWXYZ\ddefloop

\def\ddef#1{\expandafter\def\csname c#1\endcsname{\ensuremath{\mathcal{#1}}}}
\ddefloop ABCDEFGHIJKLMNOPQRSTUVWXYZ\ddefloop

\def\ddef#1{\expandafter\def\csname v#1\endcsname{\ensuremath{\boldsymbol{#1}}}}
\ddefloop ABCDEFGHIJKLMNOPQRSTUVWXYZabcdefghijklmnopqrstuvwxyz\ddefloop

\def\ddef#1{\expandafter\def\csname v#1\endcsname{\ensuremath{\boldsymbol{\csname #1\endcsname}}}}
\ddefloop {alpha}{beta}{gamma}{delta}{epsilon}{varepsilon}{zeta}{eta}{theta}{vartheta}{iota}{kappa}{lambda}{mu}{nu}{xi}{pi}{varpi}{rho}{varrho}{sigma}{varsigma}{tau}{upsilon}{phi}{varphi}{chi}{psi}{omega}{Gamma}{Delta}{Theta}{Lambda}{Xi}{Pi}{Sigma}{varSigma}{Upsilon}{Phi}{Psi}{Omega}{ell}\ddefloop

\def\balign#1\ealign{\begin{align}#1\end{align}}
\def\baligns#1\ealigns{\begin{align*}#1\end{align*}}
\def\balignat#1\ealign{\begin{alignat}#1\end{alignat}}
\def\balignats#1\ealigns{\begin{alignat*}#1\end{alignat*}}
\def\bitemize#1\eitemize{\begin{itemize}#1\end{itemize}}
\def\benumerate#1\eenumerate{\begin{enumerate}#1\end{enumerate}}

\newenvironment{talign*}
{\csname align*\endcsname}
{\endalign}
\newenvironment{talign}
{\csname align\endcsname}
{\endalign}

\def\balignst#1\ealignst{\begin{talign*}#1\end{talign*}}
\def\balignt#1\ealignt{\begin{talign}#1\end{talign}}

\let\originalleft\left
\let\originalright\right
\renewcommand{\left}{\mathopen{}\mathclose\bgroup\originalleft}
\renewcommand{\right}{\aftergroup\egroup\originalright}

\def\tinycitep*#1{{\tiny\citep*{#1}}}
\def\tinycitealt*#1{{\tiny\citealt*{#1}}}
\def\tinycite*#1{{\tiny\cite*{#1}}}
\def\smallcitep*#1{{\scriptsize\citep*{#1}}}
\def\smallcitealt*#1{{\scriptsize\citealt*{#1}}}
\def\smallcite*#1{{\scriptsize\cite*{#1}}}

\def\mbb#1{\mathbb{#1}}

\theoremstyle{plain}

\def\<{\left\langle} 
\def\>{\right\rangle}

\def\iff{\Leftrightarrow}

\def\Re{\operatorname{Re}} 

\def\P{\mbb{P}} 

\DeclareSymbolFont{rsfs}{U}{rsfs}{m}{n}
\DeclareSymbolFontAlphabet{\mathscrsfs}{rsfs}
\ifdefined\nonewproofenvironments\else
\ifdefined\ispres\else
\newtheorem{theorem}{Theorem}
\newtheorem{lemma}[theorem]{Lemma}
\newtheorem{corollary}[theorem]{Corollary}

\theoremstyle{definition}
\newtheorem{definition}[theorem]{Definition}

\renewenvironment{proof}{\noindent\textbf{Proof.}\hspace*{.3em}}{\qed\\}
\newenvironment{proof-sketch}{\noindent\textbf{Proof Sketch}
	\hspace*{0.3em}}{\qed\\}
\newenvironment{proof-idea}{\noindent\textbf{Proof Idea}
	\hspace*{0.3em}}{\qed\\}
\newenvironment{proof-of-lemma}[1][{}]{\noindent\textbf{Proof of Lemma {#1}.}
	\hspace*{0.3em}}{\qed\\}
\newenvironment{proof-of-theorem}[1][{}]{\noindent\textbf{Proof of Theorem {#1}.}
	\hspace*{0.3em}}{\qed\\}
\newenvironment{proof-of-coro}[1][{}]{\noindent\textbf{Proof of Corollary {#1}.}
	\hspace*{0.3em}}{\qed\\}
\fi

\newtheorem{proposition}[theorem]{Proposition}
\newtheorem{claim}[theorem]{Claim}
\newtheorem{assumption}{Assumption}

\fi
\makeatletter
\@addtoreset{equation}{section}
\makeatother

\hypersetup{
colorlinks,
linkcolor={red!50!black},
citecolor={blue!50!black},
urlcolor={blue!80!black}
}



%% file: sections/abstract.tex

We prove the equivalent of the Baik, Ben Arous, P\'ech\'e \cite{baik2004phasetransitionlargesteigenvalue} phenomenon for a novel, doubly sparse model where both the Wigner noise matrix and signal vector(s) are sparse. Specifically, we consider a deformed sub-Gaussian sparse Wigner ensemble with a fixed number of sub-Gaussian spike vectors of the same-order sparsity added. We show that spike vectors with signals greater than one are correlated with the top eigenvectors of the deformed ensemble and that each spike vector of signal greater than one induces an outlier eigenvalue. Notably, our results hold in the supercritical sparsity regime for the Wigner matrix ($q \gg \frac{\log n}{n}$) and for any sparse spike vector with an unbounded number of entries ($np\to \infty$). No further relationship between the sparsities of the noise matrix ($q$) and spike vectors ($p$) is necessary. This generalizes the work of Benaych-Georges and Nadakuditi \cite{benaychgeorges2010eigenvalues} and P\'ech\'e \cite{péché2005largesteigenvaluesmallrank}. 

%% file: sections/introduction.tex
\section{Introduction}

\subsection{Overview}

\paragraph{Spiked Random Matrix Models.} 

In the modern age of gigantic data sets, \textit{principal component analysis} (PCA) has proven itself to be the method of choice for data scientists and statisticians to extract the largest eigenvector from covariance matrices to enact dimensional reduction and improve interpretability, all while maintaining integral structural information.  While deformed matrix models had been explored previously, Johnstone \cite{johnstone2001} formalized the problem in the context of statistics and introduced the ``spike + noise'' structure: a low-rank spike, represented by a vector $v$ with signal strength $\theta$, is added to a random noise matrix $W$. The model's application and practical value has since permeated throughout a variety of data-driven fields. Biologists apply the model on matrices containing genetic information to extract relevant columns through a process called biclustering, thus learning shared phylogenetic traits between species by identifying and comparing similar components in their gene sequences \cite{NIPS2011_c6e19e83}. In an attempt to solve the angular synchronization problem and thus obtain an accurate estimation for a set of unknown angles from a collection of noisy measurements, one can employ PCA through the construction and analysis of a specially designed Hermitian matrix \cite{singer2009}. A graph theory problem of great interest sees the spike model as an adjacency matrix of a random weighted graph with a planted clique and aims to extract the latter \cite{montanari2014limitationspectralmethodsgaussian}. These applications are but a few of the many instances where the model provides fruitful results.

Prior to Johnstone's formalization of the model, a series of works in random matrix theory laid the foundation for the spectral properties of $W$ sampled from various distributions (we refer the reader to \cite{anderson2010introduction} for a comprehensive treatment of the literature). Armed with rich results covering the pure noise case ($\theta =0$), henceforth referred to as the ``null model,'' Johnstone and further works considered the natural approach of balancing the random spectral properties of the noise matrix against the deterministic eigenvalue of the low rank signal to solve two distinct problems: 



\begin{itemize}
    \item \textbf{Distinguishability: } Can we \textit{distinguish} between the planted and null models with high probability?
    \item \textbf{Recovery: } Can we  \textit{recover} a vector that correlates with the true  planted signal $v$?
\end{itemize}

For both problems, we seek a phase transition, or \textit{information-theoretical threshold}, where a specific balance of parameter values controls the solvability of the problem. The thresholds can differ based on whether we aim to distinguish or recover. Moreover, the task of recovering the planted signal splits into varying levels of recovery depending on the strength of correlation we can achieve or fraction of entries we can recover. For instance, can we fully recover the entries of the spike matrix down to the true value, or simply produce a vector with a non-trivial correlation with the true signal?\footnote{In the present paper, we consider the latter, which we define as weak recovery in Definition \ref{weaklyrecoverdef}.} 

\paragraph{BBP Phase Transition.} The prominent spectral method for solving these problems arises from the power method: we use the leading eigenvalue $\lambda_1(X)$ to reliably distinguish between the planted and null models, and we use the corresponding leading eigenvector $u_1(X)$ as a recovery candidate for the signal. The celebrated ``BBP transition'' of Baik, Ben Arous, and P\'ech\'e \cite{baik2004phasetransitionlargesteigenvalue} establishes a sharp phase transition for the \textbf{Spiked (Gaussian\footnote{\cite{baiksilverstein} drops the Gaussianity condition, requiring instead only a  broad finite fourth moment assumption.}) Wishart Model}: given $\gamma>0$ and $N\in\mathbb N$, we observe $X=\frac{1}{N}YY^T$ where $Y$ is an $n\times N$ matrix with i.i.d. columns drawn from $\mathcal{N}(0,I_n+\gamma v v^T)$; in their work, a threshold arises for the emergence of the spike from the Marchenko-Pastur bulk. 
Shortly after, P\'ech\'e \cite{péché2005largesteigenvaluesmallrank, F_ral_2007} and F\'eral proved an analogous result for the \textbf{Spiked Wigner Model}: we observe \begin{equation}\label{eq:spiked_wigner}
    X=\frac{\theta }{n} v v^T+\frac{1}{\sqrt{n}}W
\end{equation} 
where the symmetric matrix $W$ has independent entries of $\mathbb E[W_{ii}]=\mathbb E[W_{ij}]=0,$ $ \mathbb E[W_{ij}^2]=\sigma^2$, and $\mathbb E[W_{ii}^2]$ bounded. They showed that, with some additional conditions on the moments of the matrix, a threshold arises for the emergence of the spike from the semicircular bulk:
\begin{informalresult}{(\cite{péché2005largesteigenvaluesmallrank}, \cite{F_ral_2007}, \cite{benaychgeorges2010eigenvalues})} \label{general_bbp}
    Given unit vector $v$ and $\theta >0$, we find, as $n\to\infty$, 
    \begin{itemize}
        \item if $\theta \le 1$, then $\lambda_1(X)\xrightarrow[]{a.s.}2$, and $\langle u_1(X), v \rangle^2 \xrightarrow[]{a.s.} 0$; 
        \item if $\theta >1$, then $\lambda_1(X)\xrightarrow[]{a.s.}\theta +\frac{1}{\theta }>2$, and $\langle u_1(X), v \rangle^2 \xrightarrow[]{a.s.} 1-\frac{1}{\theta ^2}>0$~.
    \end{itemize}
\end{informalresult}

In order to paint a general picture of the phenomenon, the above informal result omits specific conditions on the distribution of the matrices involved. Furthermore, the result can extend to the case of adding multiple spikes. For instance, \cite{péché2005largesteigenvaluesmallrank} considered $r\ge1$ spikes, but did so by representing the spikes as one rank-$r$ diagonal matrix due to the rotational invariance of their Gaussian Unitary Ensemble noise matrix. \cite{benaychgeorges2010eigenvalues} explored a wider class of perturbations by requiring only the spike or noise matrix to be orthogonally invariant, meaning the distribution of the matrix is invariant under the action of the orthogonal group under conjugation. In our model, we do not require rotational invariance of the spike matrix or the noise. 

Throughout the study of spiked models, many results hold in similar fashion for both the Wishart and Wigner models. In fact, the computational equivalence of the two models is widely conjectured, with \cite{bresler2025computationalequivalencespikedcovariance} providing the most recent significant progress in grouping the two statistical models into an equivalence class. In this present work, we consider only the spiked Wigner model.

\paragraph{BBP Phenomenon with Altered Parameters.} Naturally, one may consider altering the model parameters to explore if the BBP spectral method still remains a viable solution for distinguishing and recovering. Initially, researchers sought to relax the conditions on the assumptions of the noise matrix $W$. \cite{F_ral_2007} first proved the BBP transition for Wigner matrices with symmetric laws ($\mathbb E[W_{ij}^{2k+1}]=0$ and even sub-Gaussian moments). \cite{Capitaine_2009} extended the result to models satisfying a Poincair\`e inequality, and \cite{pizzo2011finiterankdeformationswigner} followed after with an extension to Wigner models with finite fifth moments. More recent works have also made progress on relaxing the assumptions on the model parameters, but many have also worked toward altering the structure of the model completely. First introduced by \cite{Lesieur2015} and rigorously analyzed by \cite{perry2016optimalitysuboptimalitypcaspiked}, one can consider an entrywise transformation of the spiked matrix model. This transformed model draws inspiration from applications of Improved PCA and Deep Neural Networks with the intention of amplifying the Signal-to-Noise Ratio (SNR) $\theta$, thus making the signal more detectable. Further efforts by \cite{feldman2025spectral, guionnet2023spectral} established a BBP phase transition for the transformed model, with \cite{lee2025fluctuationslargesteigenvaluestransformed} providing the best known result in terms of parameter assumptions. A series of works \cite{Guionnet_2025, pak2023optimalalgorithmsinhomogeneousspiked, NEURIPS2024_a96368eb, de2025computationalstatisticallowerbounds, mergny2024spectralphasetransitionoptimal, behne2022fundamentallimitsrankonematrix, chen2022statisticalinferencefiniteranktensors} also consider the spiked problem with inhomogeneous noise, where the signal is observed through different noisy channels across the matrix, thus accounting for a wider regime of models such as the degree-corrected Stochastic Block Model. A collection of those previous works overlaps with a separate extension of the problem where the spike and noise are considered to be tensors rather than matrices.

\paragraph{Sparse PCA.} A common setting alteration, and the model deviation most pertinent to this present work, considers sparse PCA where sparsity parameters, such as independent Bernoulli random variables, are applied entrywise to the spike vectors. In an application sense, this sparsity provides practical benefits in a high-dimensional setting, often improving interpretability, increasing efficiency, and reducing model complexity \cite{Zou2006SparsePCA,johnstone2009consistency}. Under certain model assumptions, the general BBP threshold described in Informal Result \ref{general_bbp} captures sparse PCA as the theorem assumes only a unit norm condition for the spike $v$; however, when considering alterations to the model's parameters, Informal Result \ref{general_bbp} does begin to fail. For instance, \cite{benaychgeorges2010eigenvalues} covers both the eigenvector and eigenvalue result for multiple sparse spikes, but their analysis requires either the spike or noise to be orthogonally invariant. Seeing as orthogonal invariance is broken with the introduction of sparsity in the spike, this requires the noise matrix $W$ to be orthogonally invariant. 
Methods for sparse PCA below the spectral threshold $\theta_i<1$ have been studied extensively as well (see Section \ref{beyondbbp}). 
With the landscape of sparse PCA (under certain prior assumptions) being covered by BBP, we naturally consider the possibility of introducing sparsity across all aspects of the model, which brings us to the central question of this work: 

\vspace{-1.mm}
\begin{center}
    \textit{
    Can we generalize this BBP theorem in the setting where both spikes and noise are sparse?}
\end{center}

\paragraph{Doubly Sparse PCA.}\label{doub_sparse_intro} Here, we answer the question in the affirmative. We importantly note how the BBP result of \cite{benaychgeorges2010eigenvalues} no longer holds even in the case where the nonzero entries of the noise are Gaussian, since invariance is required by their procedure and sparsity disrupts this property. When considering a candidate for recovery, many of the aforementioned BBP results either exclude the problem of recovery entirely and focus on distinguishability via eigenvalues, rely specifically on the orthogonal invariance property (similar to the results of \cite{benaychgeorges2010eigenvalues}), or apply a different technique based on stricter assumptions for the spike prior. Here, we consider an approach that does not require invariance\footnote{Circumventing orthogonal invariance to prove recovery is not uncharted terrain; for instance,  \cite{barbier_2025} explores a statistical physics approach of recovery below the spectral threshold in the regime where the noise is rotationally invariant, but the spike is not. Our model, however, allows both the noise and spike to not be rotationally invariant.}, and, to the best of our knowledge, we provide the first rigorous argument for recovering multiple overlapping spiked vectors via PCA in the setting where orthogonal invariance is not present in the noise matrix. 

\subsection{Doubly Sparse Literature Review} \label{doub_sparse_lit_review}
\paragraph{Random Matrix Theory Tools. } The results of this present work are timely and could not have been produced if not for a series of recent contributions to the field of random matrix theory. First, a major ingredient involves the control of the operator norm for the sparse Wigner matrix. The study of extremal eigenvalues of sparse matrices is a field that has recently grown significantly; however, these works either consider cases with optimal sparsity ($q\gtrsim \frac{\log n}{n}$) but bounded random variable entries \cite{tikhomirov2019outliersspectrumsparsewigner, alt2021completelydelocalizedregionerdhosrenyi} or sub-Gaussian entries but with sub-optimal sparsity ($q\gtrsim\frac{(\log n)^4}{n}$) \cite{Bandeira_2023}. \cite{augeri2023largedeviationslargesteigenvalue} provides the essential ingredient needed to show the top eigenvalue of our sub-Gaussian, sparse ($q\gg \frac{\log n}{n}$) Wigner matrix is bounded with high probability. Further, we require a local law result for our sparse Wigner matrix, which has been thoroughly explored over recent years \cite{erdHos2012spectral,tran2013sparse,erdHos2013spectral,dumitriu2019sparse,he2021fluctuations}; we adapt the local law of \cite{sparsehanson} for own purposes as their result covers, to the best of our knowledge, the most optimal $\frac{\text{polylog}(n)}{n}$ regime. 

\paragraph{Similar Sparse Models.} We remark that, while our contributions and consideration of spectrally distinguishing and recovering a sparse spike from sparse noise are novel, the notion of a sparse noisy observation has been around for a while. Indeed, Robust PCA explores the problem of separating a low-rank structure from sparse noise. \cite{candes2009robustprincipalcomponentanalysis} provides a convex optimization procedure for separating these two entities, though the only randomness considered is the location of the sparsity in the mask. Their work gives an extensive background on the many applications of Robust PCA, including video surveillance, facial recognition, ranking and collaborative filtering, and more, showing how sparse noise grows increasingly prevalent in our large-dimensional modern age with instances of occlusion, tampering, and sensor failure. These aforementioned applications naturally extend to the setting of our model, where the spike also has a sparse structure. Moreover, doubly sparse applications have also been studied across different fields. \cite{ZARMEHI2018145, sadrizadeh2019fast} provide an electrical engineering approach where they view the problem as a non-convex, NP-hard optimization problem on rectangular matrices with undefined sparsity to solve applications involving impulsive noise (image, audio, video denoising).  More recently, \cite{Han_2025} looks at a comparable doubly sparse structure, but for non-Hermitian matrices, thus changing the landscape of the problem and relying on a completely different set of random matrix theory tools. Similarly, \cite{hachem2026extreme} studies the extreme eigenvalues of an $n\times n$ non‑Hermitian sparse i.i.d. matrix with sparsity $q$ and a deterministic finite‑rank additive deformation; they show that eigenvalue outliers outside the unit disk asymptotically match the eigenvalues of finite‑rank additive deformation if $qn\to\infty$. Moreover, 
in the rank‑one case \cite{hachem2026extreme} provides control over the right‑eigenvector alignment similar to our Theorem~\ref{eigenvector_theorem} but under an additional sparsity condition $qn \gg \log^9 n$ while we only require $qn \gg \log(n)$. 
\cite{adomaityte2025pcarecoverythresholdslowrank} examines a broad class of sparse graph-based models where the noise is drawn from the configuration model with a general degree distribution with finite mean and bounded maximal degree. Via the replica method, the authors find a BBP-like phase transition for the expected location of the largest eigenvalue and expected correlation of the largest eigenvector. Our model does not overlap with any of the models considered there. 

\paragraph{Application: Planted Clique Model.}
Beyond the above applications of a doubly sparse matrix model, a major motivating example for this work is the
\textbf{Planted Clique Problem (PCP)}: given a graph $\mathcal G$, distinguish whether $\mathcal G\sim \mathcal G(n,1/2)$ or
$\mathcal G$ is drawn from Erd\'os-R\'enyi model $\mathcal G(n,1/2)$ with an additional planted clique of size $k$.
Under the null, the maximum clique size of $\mathcal G(n,1/2)$ is $(2+o(1))\log_2 n$ w.h.p.~\cite{bollobas1976cliques}, yielding an information-theoretic
detection threshold around $k\approx 2\log_2 n$ via exhaustive search.
In contrast, the best known polynomial-time algorithms (e.g., spectral/SDP methods) succeed only for $k=\Omega(\sqrt n)$ \cite{alon1998finding,feige2000finding}, and classical negative results together with modern low-degree/Sum-of-Squares analyses provide evidence that the regime $k=o(\sqrt n)$ is computationally hard \cite{jerrum1992large,chen2025almost,barak2019nearly}. Under the same regime, \cite{gamarnik2019landscape} also establishes that the computational hardness arises from the Overlap Gap Property, showing that the landscape of PCP is fractured into disconnected clusters separated by an empty region of overlap, creating a topological barrier for efficient algorithms. From the high-dimensional statistics viewpoint, PCP is also a canonical source of conjectured computational hardness
for sparse PCA: \cite{berthet2013complexity} gives polynomial-time reductions from planted clique to sparse PCA detection, and \cite{ma2015sum} shows Sum-of-Squares lower bounds for sparse PCA and further connects these problems to moment-based relaxations. While PCP is typically formulated for constant edge density and unweighted adjacency matrices, our model permits
$n$-dependent sparsity in the noise mask and real-valued edge weights, linking the planted-clique viewpoint to
sparse-PCA and spiked random matrix models. This is similar with the setting of \cite{chatterjee2025detectingweightedhiddencliques}, which adapts a model where the weights of the planted clique are drawn from a separate distribution, thus replacing the associated edges of the Erd\'os-R\'enyi graph rather than additively perturbing the model.

\subsection{Notation}

$\|\cdot\|_2$ and $\| \cdot\|$ are both used to denote the operator norm for the matrix. $\|\cdot\|_F$ is used as the Frobenius norm.   $\|X\|_{\psi_l}$ represents the sub-Gaussian ($l=2$) or sub-exponential ($l=1$) norm $$\|X\|_{\psi_l}=\sup_{t\ge1}t^{-l}(\mathbb E |X|^t)^{l/t}=\inf\{K>0 : \mathbb E \exp(|X|^l/K^l)\le 2 \}~.$$
We use $(\lambda_i(X), u_i(X))$ to be the top $i$-th normalized leading eigenpair of a matrix $X$. Further, we adopt the descending eigenvalue ordering $\lambda_1\ge\lambda_2\ge\cdots\ge\lambda_n$.

We use standard asymptotic notations $O(\cdot), \Theta(\cdot),$ and $ \Omega(\cdot)$. Further, we say $A\lesssim B$ if $A\le CB$ for some absolute constant $C$ and $A \ll B$ if $A/B=o(1)$.

For a set $A$, let dist$(A,b)=\min_{a\in A}|a-b|$. We use spec$(X)$ to represent the set of eigenvalues for a matrix $X.$ We use $\mathcal N(X)$ to represent the null space of a matrix $X.$

%% file: sections/model.tex
\section{Model Setup}

In this section, we formally introduce our model in \eqref{model} by first defining its components. We use $\odot$ as the Hadamard product for vectors and matrices: $(Y\odot Z)_{ij}=Y_{ij}Z_{ij}$, for $i,j\in [n]$.

\subsection{The Spikes}

We begin by considering the underlying sparse signals. We fix $r\in\mathbb{N}$ as the number of distinct signals. For $i\in [r],$ we denote the signal-to-noise ratios $\theta _i>0$ paired with the spike vectors $v_i=\tilde{v_i} \odot b_i\in\mathbb{R}^n$ where $\tilde{v_i}~\iidsim~ \tilde{\mathcal V}$ for some known sub-Gaussian prior distribution $\tilde{\mathcal V}$ in $\mathbb{R}^n$ and $b_i ~\iidsim~\mathcal B$ for all $i\in[r]$. Here, $\tilde{\mathcal V}$ represents the dense, sub-Gaussian prior, and $\mathcal B$ represents our method of sparsity sampling: 
\begin{assumption}[Centered, Unit Variance Sub-Gaussian Vectors]
    For $i\in [r]$, $\tilde v_i ~\iidsim ~ \tilde{\mathcal V}$ if $\mathbb E|(\tilde v_i)_j|^t=O(t)^{t/2}$ as $t\to\infty$ where $\|(\tilde v_i)_j\|_{\psi_2}\le K$ for all $j\in [n]$. Furthermore, $\mathbb E[(\tilde v_i)_j]=0$ and $\mathbb E [(\tilde v_i)_j^2]=1$ for all $i\in [r]$ and $j\in[n]$.
\end{assumption}

\begin{assumption}[Independent Sparsity]\label{def:sparse_1}
    For $i\in [r],$  when $b_i~\iidsim~ \mathcal B,$ we consider the natural Bernoulli sparsity applied to the vector, i.e., $(b_i)_j$ are i.i.d. $  \text{Ber}(p)$ for all $j\in [n]$.
\end{assumption}
Observe how the support size of the spike vectors is $np$ only in expectation; however, if $np\to \infty$ (as is the case under Assumption \ref{p_assumption}), we can condition on the size of the support, then use Lemma \ref{support_size_independent} to control its fluctuation around its mean. Notice this sparsity prior does allow for potential overlap in support across the spike vectors $v_i$ when $np$ is large enough, which is allowed by Assumption \ref{p_assumption} as there is no dependence on orthogonality between signal vectors in our main proofs.  

We can then consider the $r$ spike additions as one matrix perturbation 
$$\theta_1v_1v_1^T+\theta_2v_2v_2^T+\cdots+\theta_rv_rv_r^T=V\Theta V^T~,$$ where we have matrices 
\begin{equation}\label{eq:def_spikes}
    V=\begin{bmatrix}
        v_1 & v_2 & \cdots & v_r
    \end{bmatrix}\qquad \text{and }\qquad \Theta =\text{diag}(\theta _1,~ \theta _2,~ \dots,~ \theta _r)~,
\end{equation}
with $\theta_1>\theta_2>\cdots>\theta_r>0$. Our analysis extends trivially to both negative and repeated signal-to-noise ratios. For instance, applying the procedure to $-X$ would produce the same results, but would create significant eigenvalues on the left side of the bulk. The case of repeated $\theta_i$ values introduces complications as specific signal vector recovery becomes impossible; \cite{benaychgeorges2010eigenvalues} provides a template on how to handle this case, and we provide similar results corresponding to our model in Appendix \ref{equal_signals}.

\subsection{The Noise}

Next, we consider the noise. We start out with a dense, real Wigner matrix $W \in \mathbb R^{n\times n}$ satisfying the following assumption: 
\begin{assumption}[Sub-Gaussian Wigner Matrix]\label{wigner_subg_assumption}
    For all $\{i,j \}\in [n]^2,$ $W$ is symmetric with centered ($\mathbb E[W_{ii}]=\mathbb E [W_{ij}]=0$), finite variance ($\mathbb E[W_{ii}^2]=2$ and $\mathbb E[W_{ij}^2]=1$), sub-Gaussian entries. Specifically, consider the log-Laplace transform of the entries of $W$: $\Lambda_{i,j}(t):=\log \mathbb E\exp (tW_{i,j})$. Then, for all $\{i,j\}\in [n]^2$, $$\sup_{t\neq 0}\left\{\frac{\max\{\Lambda_{i,i},\Lambda_{i,j},  \}}{t^2} \right\}<\infty~.$$ Moreover, for any $r\ge1$, we assume $\max_{i,j}(\mathbb E(W_{ij}^2) )^{1/2}\le K_2r^{1/2}$ where $K_2\ge1$.
\end{assumption} Next, we introduce the noise mask matrix $A \in \mathbb R^{n\times n}$ satisfying the following assumption: 
\begin{assumption}[Bernoulli Noise Mask]\label{noise_mask_assumption}
    $A$ is a symmetric matrix $A_{ij}=A_{ji} \sim$ Ber$(q)$ for all $\{i,j\}\in[n]^2$.
\end{assumption}
Our noise matrix is thus the Hadamard product matrix $W\odot A$.

\subsection{The Model}

Therefore, we have the following real $n\times n$ matrix representing our \textit{doubly sparse spiked Wigner model}: 
\begin{align}\label{model}
     X=\frac{1}{np}V\Theta V^T+\frac{1}{\sqrt{nq}}W\odot A~.
\end{align}
The above normalizations give us $\|\frac{1}{np}v_iv_i^T\|_2\xrightarrow[]{\P}\Theta(1)$ and $\|\frac{1}{\sqrt{nq}}W\odot A\|_2\xrightarrow[]{\P}\Theta(1)$, thus ensuring the spike and noise have comparable spectral norms, which allows us to keep $\theta_i=\Theta(1)$.

\subsection{The Tasks}
We are interested in two tasks in the context of this paper. $\mathbb Q$ represents the null distribution, meaning $\Theta = 0$ when $X\sim \mathbb Q$, and $\mathbb P$ represents the planted distribution where $\Theta \not =0$ for $X\sim \mathbb P$.

\begin{definition}\label{distinguishdef}
    Given the planted $\mathbb P$ and null models $\mathbb Q$, we can \textit{distinguish}\footnote{Also referred to as Strong Detection in the literature.} if there exists a test statistic $\mathcal T$ such that $\mathbb P(\mathcal T(X)<\epsilon_0)+\mathbb Q(\mathcal T(X)\ge \epsilon_0)\to 0$ as $n\to \infty$ where $\epsilon_0$ is some established threshold parameter.
\end{definition}

\begin{definition}\label{weaklyrecoverdef}
    Given $X\sim \mathbb P$, we can \textit{weakly recover}\footnote{Other levels of recovery exist, such as Partial, Almost Exact, and Exact, but they depend on an atomic prior and are thus not general.} if there exist estimators $\mathcal M_i$ such that $$|\langle\mathcal M_i(X), v_i\rangle|/\| v_i\|_2>0$$ with probability tending to $1$ as $n\to \infty$.
\end{definition}

\paragraph{Remark. } In this work, we provide positive results for these tasks, i.e. specific threshold values for the BBP phenomenon above which the problem can be solved. For more information on obtaining negative results (conditions under which the problem cannot be solved regardless of the method employed), see Section \ref{beyondbbp}.


%% file: sections/main_results.tex
\section{Main Results}\label{sec:main}
\begin{assumption}[Sparsity Assumptions]\label{p_assumption}
    For the noise sparsity, assume $q\gg \frac{\log n}{n}$; more specifically, let $q:=\tau\frac{\log n}{n}$ such that $\tau:=\tau(n)\to\infty$ as $n\to\infty$. For the spike sparsity, assume $p\gg\frac{1}{n}$. 
    Thus, we have $np\to\infty$ and $nq\to\infty$. 
\end{assumption}

\begin{definition}[Probability Bound Function]
    For any $\gamma>0,$ let 
    \begin{equation}\label{eq:rate_func}
        \bm f_r(n):=\max\left\{r(r-1)\exp\left(-\frac{c_1}{4C_3C_2K^4}\frac{np}{(\log(np))^{2\gamma}}\right), ~ rn^{\max\{-2,-D\}}, ~\exp(-C_8nq)\right\}~,
    \end{equation}
    where $p$ and $q$ are functions of $n,$ $r$ represents the number of spikes, and $c_1, C_2, C_3, C_8, K$, and $D$ are positive constants defined in Theorems \ref{eigenvalue_theorem} and \ref{eigenvector_theorem}. Note $\bm f_r(n)\to0$ as $n\to \infty$ under Assumption \ref{p_assumption}.
\end{definition}


\begin{theorem}[Doubly Sparse BBP: Eigenvalues] \label{eigenvalue_theorem}
    Given Assumption \ref{p_assumption}, we have the following behavior for the extremal eigenvalues of $X$ in \eqref{model} for all fixed $1\le i\le r$: 
\begin{align*}
    \lambda_i(X) \xrightarrow{} 
    \begin{cases}
        \theta _i + \frac{1}{\theta _i} & \theta _i > 1 \\
        2 & \theta _i \le 1 ~\text{ or } i>r
    \end{cases}~,
\end{align*}
in probability. More specifically, for any $\theta_i>1$ and $\gamma>0$, there exist some constants $c_1, C_2, C_3, C_5, C_7, C_8$, and $D>0$ such that 
\begin{align*}
    \mathbb P \left( \left | \lambda_i(X)-\left(\theta_i+\frac{1}{\theta_i}\right) \right| \le C_7\theta_i\Lambda \right) \ge 1-\bm f_r(n)~,
\end{align*}
holds, where $\bm f_r(n)$ is defined by \eqref{eq:rate_func} and
$$\Lambda = \|\Theta\|_F\sqrt{C_5r^2\max\{(\log(np))^{-2\gamma}, ~ \tau^{-1/2}\}}~.$$
\end{theorem}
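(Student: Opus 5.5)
We follow the determinant-identity approach of Benaych-Georges and Nadakuditi, replacing orthogonal invariance by an isotropic local law for the sparse Wigner noise. Write $H=\frac{1}{\sqrt{nq}}W\odot A$ and $U=\frac{1}{\sqrt{np}}V\in\mathbb R^{n\times r}$, so that $X=H+U\Theta U^T$.

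\textbf{Step 1: the noise has no outliers.} By the operator-norm control for sparse sub-Gaussian Wigner matrices with $q\gg\frac{\log n}{n}$ (the Augeri-type bound recalled in the preliminaries), we have $\|H\|\le 2+\varepsilon_n$ with probability at least $1-\exp(-C_8nq)$. The semicircle law gives $\lambda_1(H)\ge 2-o(1)$. Hence any $z>2+\varepsilon_n$ lies outside $\mathrm{spec}(H)$. For such $z$, the identity
\[
\det(zI-X)=\det(zI-H)\det\bigl(I_r-\Theta^{1/2}U^TG(z)U\Theta^{1/2}\bigr),\qquad G(z)=(zI-H)^{-1},
\]
shows that the eigenvalues of $X$ above the bulk are exactly the zeros of the $r\times r$ determinant $M_n(z)=\det(I_r-\Theta^{1/2}U^TG(z)U\Theta^{1/2})$.

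\textbf{Step 2: concentration of the $r\times r$ matrix.} The key estimate is
\[
\bigl\|U^TG(z)U-m(z)I_r\bigr\|_F\lesssim r\sqrt{C_5\max\{(\log(np))^{-2\gamma},\tau^{-1/2}\}}
\]
uniformly over $z$ in a compact set to the right of $2+\delta$. Here $m(z)$ is the semicircle Stieltjes transform, normalized so that $1/m(z)=\theta$ at $z=\theta+1/\theta$.

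We condition on $H$, which is independent of $V$, and treat the diagonal and off-diagonal entries separately.

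\emph{Diagonal entries.} For the entry $\frac{1}{np}v_i^TGv_i$, a Hanson–Wright inequality for the sparse sub-Gaussian vector $v_i=\tilde v_i\odot b_i$ gives concentration around $\frac{1}{np}\,p\,\mathrm{tr}\,G$, which equals $\frac1n\mathrm{tr}\,G$. The sparsity enters only through $\|G\|_F^2\le n\|G\|^2$. The resulting tail $\exp(-c\,np/(K^4(\log np)^{2\gamma}))$ matches the first term of $\bm f_r(n)$; the $r(r-1)$ factor there comes from a union bound over all pairs $(i,j)$.

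\emph{Off-diagonal entries.} The cross terms $\frac{1}{np}v_i^TGv_j$ for $i\ne j$ are handled the same way, using independence of $v_i$ and $v_j$.

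\emph{Normalized trace.} Finally, $\frac1n\mathrm{tr}\,G(z)\to m(z)$ with error $O(\tau^{-1/2})$, by the averaged local law adapted from \cite{sparsehanson}. This holds with probability at least $1-n^{-D}$ and is valid outside the spectrum after extending to real $z$ via $\|H\|$-control. Uniformity in $z$ follows from a net argument together with the Lipschitz continuity of $G$ away from the spectrum.

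\textbf{Step 3: perturbing the determinant.} By Step 2, $M_n(z)$ is uniformly close to $\prod_i(1-\theta_i m(z))$. For $\theta_i>1$ this product has a simple zero at $z_i=\theta_i+1/\theta_i$, and $\theta_i\le 1$ produces no zero above $2$. Since $\theta_1>\cdots>\theta_r$, the limiting zeros are separated.

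A Rouché-type or Weyl-type argument then locates each zero. Concretely, the eigenvalues of the Hermitian matrix $\Theta^{1/2}U^TGU\Theta^{1/2}$ are within $\|\Theta\|_F\cdot\mathrm{err}$ of $\theta_i m(z)$, and each such eigenvalue is monotone decreasing in $z$. Combining this with the derivative bound $|m'(z_i)|\gtrsim \theta_i^{-2}$ moves the zero by at most $C_7\theta_i\Lambda$. This also yields exactly one outlier per supercritical spike, and hence the correct index $i$.

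\textbf{Step 4: the subcritical case.} For $\theta_i\le1$ or $i>r$, no zeros exceed $2+o(1)$. Interlacing with $\lambda_{i}(H)\ge 2-o(1)$, valid since $X-H$ has rank $r$, gives $\lambda_i(X)\to2$.

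The main obstacle is Step 2 uniformly near the edge. The local law of \cite{sparsehanson} controls only traces, so quadratic forms must come from the independence of $V$ and $H$. The step therefore reduces to a sparse Hanson–Wright bound with correct dependence on $np$ when $p$ may be as small as $\omega(1/n)$. We would first try to control the support size via Lemma \ref{support_size_independent}, and then apply standard Hanson–Wright on the support, with $\|G_{SS}\|_F^2\le |S|\,\|G\|^2$.
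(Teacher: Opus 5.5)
Your architecture matches the paper's: the determinant identity reduces outliers to zeros of an $r\times r$ determinant. Then Hanson--Wright on the supports controls the quadratic forms, a local law identifies the centering with $m(z)$, a Weyl-type perturbation locates each zero, and interlacing handles the subcritical indices. Two of your refinements improve on the written proof:
\begin{itemize}
\item \emph{Index matching.} You work with the symmetric matrix $\Theta^{1/2}U^TG(z)U\Theta^{1/2}$, whose eigenvalues $\mu_k(z)$ are non-increasing in $z$ because $\frac{d}{dz}G=-G^2$. Sylvester inertia then gives $\lambda_k(X)>z\iff\mu_k(z)>1$ directly. The paper instead applies Weyl's theorem to $\frac{1}{np}V^TRV\Theta$, which is not symmetric as written, and settles the ordering by a continuity claim.
\item \emph{Uniformity in $z$.} Your net argument is needed because the estimates are evaluated at the random point $\tilde z=\lambda_i(X)$, while the paper only proves them for fixed $z$.
\end{itemize}

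The diagonal step, however, does not close as you wrote it. Hanson--Wright on the full vector $v_i$ with $\|G\|_F^2\le n\|G\|^2$ gives a tail $\exp(-c\,np^2\epsilon^2/\|G\|^2)$ for a deviation $\epsilon$ of $\frac{1}{np}v_i^TGv_i$. This is useless once $p\lesssim n^{-1/2}$; the smaller $\psi_2$ norm of a sparse entry only buys a $\log(1/p)$ factor. Your fallback, Hanson--Wright on the support $S$ with $\|G_{SS}\|_F^2\le|S|\,\|G\|^2$, has the right tail. But it centers at $\frac{1}{np}\mathrm{tr}\,G_{SS}$, not at $\frac1n\mathrm{tr}\,G$, and an averaged law alone says nothing about a sub-sum of $|S|\approx np\ll n$ diagonal entries. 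You need one more ingredient, and there are two options:
\begin{itemize}
\item \emph{The paper's route.} Lemma~\ref{lemma_2} is an entrywise diagonal law, not only a trace law: $\max_j|R_{jj}(z)-m(z)|\le\tilde C_1\tau^{-1/4}$ with probability $1-n^{-D}$. So $\frac{1}{|S|}\mathrm{tr}\,R_{SS}$ is close to $m(z)$ for every $S$, and Lemma~\ref{support_size_independent} handles the factor $|S|/np$.
\item \emph{Keeping only the averaged law.} Use that $b_i$ is independent of $H$. Then $\sum_j (b_i)_jG_{jj}$ is a sum of independent terms bounded by $\|G\|$, with mean $p\,\mathrm{tr}\,G$ and variance at most $np\|G\|^2$. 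Bernstein gives $|\frac{1}{np}\mathrm{tr}\,G_{SS}-\frac1n\mathrm{tr}\,G|\le(\log np)^{-\gamma}$ with probability at least $1-2\exp(-c\,np(\log np)^{-2\gamma})$, where $c$ depends on $\|G\|\le\sqrt{C_2}$. This matches the first term of $\bm f_r(n)$ and also absorbs the support-size fluctuation.
\end{itemize}
The trade-off is that the averaged-law route asks less of the local law, while the entrywise route avoids a second concentration step and works for any support. Finally, you claim an $O(\tau^{-1/2})$ trace error, which you have not justified; the paper only extracts $\tau^{-1/4}$ (Lemma~\ref{lemma_2} with $\delta\asymp\tau^{-1/4}$). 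That is all $\Lambda$ requires, so it does not affect your conclusion.
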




\begin{corollary}[Distinguishability]
    Under Assumption \ref{p_assumption}, given $\epsilon>0$, we can use the largest eigenvalue $\lambda_1(X)$ to distinguish between the planted $\mathbb P$ and null $\mathbb Q$ models \textbf{if $\theta_1>1$}: 
        \begin{itemize}
            \item If $\lambda_1(X)>2+\epsilon$, we are in the planted model;
            \item If $\lambda_1(X)<2+\epsilon,$ we are in the null model.
        \end{itemize}
\end{corollary}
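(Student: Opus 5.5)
The plan is to deduce the corollary directly from Theorem \ref{eigenvalue_theorem}, applied once under each of the two hypotheses. The test statistic is $\mathcal T(X)=\lambda_1(X)$ with threshold $\epsilon_0=2+\epsilon$, where $\epsilon>0$ is chosen small enough that $2+\epsilon<\theta_1+\frac{1}{\theta_1}$. Since $\theta_1>1$, we have $\theta_1+\frac{1}{\theta_1}>2$ strictly, so such an $\epsilon$ exists. By Definition \ref{distinguishdef}, it suffices to show
\[
\mathbb P\bigl(\lambda_1(X)<2+\epsilon\bigr)+\mathbb Q\bigl(\lambda_1(X)\ge 2+\epsilon\bigr)\to0 .
\]

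\emph{Planted term.} Apply Theorem \ref{eigenvalue_theorem} with $i=1$. With probability at least $1-\bm f_r(n)$,
\[
\Bigl|\lambda_1(X)-\bigl(\theta_1+\tfrac1{\theta_1}\bigr)\Bigr|\le C_7\theta_1\Lambda .
\]
Under Assumption \ref{p_assumption} we have $np\to\infty$ and $\tau\to\infty$, while $r$ and $\Theta$ are fixed. Hence $\Lambda\to0$, and for large $n$ we get $C_7\theta_1\Lambda<\theta_1+\frac1{\theta_1}-2-\epsilon$. On this event $\lambda_1(X)>2+\epsilon$. Since $\bm f_r(n)\to0$, it follows that $\mathbb P(\lambda_1(X)<2+\epsilon)\le \bm f_r(n)\to0$.

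\emph{Null term.} Under $\mathbb Q$ we have $X=\frac{1}{\sqrt{nq}}W\odot A$. This is covered by Theorem \ref{eigenvalue_theorem} in the regime ``$\theta_i\le1$ or $i>r$'', which gives $\lambda_1(X)\to2$ in probability. One way to see this is to take the index $i=1>r$ with zero signal. Alternatively, one can invoke the norm bound for sparse sub-Gaussian Wigner matrices from \cite{augeri2023largedeviationslargesteigenvalue} that underlies that theorem, namely $\|\frac{1}{\sqrt{nq}}W\odot A\|\le 2+o(1)$ with high probability when $q\gg\frac{\log n}{n}$. Either way, $\mathbb Q(\lambda_1(X)\ge2+\epsilon)\to0$.

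Adding the two bounds yields distinguishability. The only point that needs care is the null case. There the theorem's quantitative bound is stated only for $\theta_i>1$, so the convergence $\lambda_1\to2$ for the pure noise matrix must come from the qualitative part of the theorem, or from the sparse-Wigner edge bound, rather than from the explicit rate $\bm f_r(n)$. The small-$\epsilon$ condition is essential, and the statement should be read with $\epsilon<\theta_1+\frac1{\theta_1}-2$.
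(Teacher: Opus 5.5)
Your proof is correct and follows the paper's argument. The null term is controlled by the sparse-Wigner edge bound of \cite{augeri2023largedeviationslargesteigenvalue}, which is exactly Lemma \ref{semicirc_law_sparse_lemma}; citing that lemma is the cleaner of your two justifications, since Theorem \ref{eigenvalue_theorem} is stated for $\theta_i>0$ and its sub-critical case is itself derived from that lemma. The planted term then follows from Theorem \ref{eigenvalue_theorem} with $i=1$, and your explicit requirement $\epsilon<\theta_1+\theta_1^{-1}-2$ (together with $\Lambda\to0$) is a condition the paper's proof uses but never states, so it is a needed correction rather than a deviation.
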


\begin{proof}
    Recalling Definition \ref{distinguishdef}, we choose test statistic $\mathcal{T}=\lambda_1(X)$. By Lemma \ref{semicirc_law_sparse_lemma}, we have $\mathbb{Q}(\mathcal{T}(X)>2+\epsilon)=\mathbb Q(\lambda_1(\frac{1}{\sqrt{nq}}W\odot A)>2+\epsilon)\to0$ with high probability. By Theorem $\ref{eigenvalue_theorem}$, we have $\mathbb P(\mathcal T(X)<2+\epsilon)\to0$ with high probability. 
\end{proof}

\begin{corollary}
    Under Assumption \ref{p_assumption}, we can detect the presence of $s$ spikes where $s=\max\{i\in [r]:\theta_i>1\}$.
\end{corollary}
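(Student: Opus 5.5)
The plan is to build the test statistic from the number of eigenvalues of $X$ lying outside the bulk, and to read off its asymptotic value from Theorem \ref{eigenvalue_theorem} applied to each index simultaneously. Since $r$ is fixed and $\theta_1>\cdots>\theta_r$, set $s=\max\{i\in[r]:\theta_i>1\}$, with $s=0$ if no such $i$ exists. Fix the gap
\[
\epsilon:=\tfrac12\Bigl(\theta_s+\tfrac{1}{\theta_s}-2\Bigr)>0 .
\]
Because $\theta\mapsto\theta+1/\theta$ is increasing on $(1,\infty)$, we have $\theta_i+1/\theta_i\ge 2+2\epsilon$ for all $i\le s$. The estimator is
\[
\hat s:=\#\{j:\lambda_j(X)>2+\epsilon\}.
\]
The goal is to show $\mathbb P(\hat s=s)\to 1$.

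First I would handle the indices $i\le s$. For each such $i$, Theorem \ref{eigenvalue_theorem} gives $|\lambda_i(X)-(\theta_i+1/\theta_i)|\le C_7\theta_i\Lambda$ with probability at least $1-\bm f_r(n)$. Under Assumption \ref{p_assumption} we have $\Lambda\to0$, since $\log(np)\to\infty$, $\tau\to\infty$, and $r,\|\Theta\|_F$ are fixed. So for $n$ large, $C_7\theta_i\Lambda<\epsilon$, which gives $\lambda_i(X)>2+\epsilon$. A union bound over the at most $r$ indices shows that this holds for all $i\le s$ at once, with probability at least $1-r\bm f_r(n)\to1$. Hence $\hat s\ge s$ with high probability.

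Next I would show $\lambda_{s+1}(X)\le 2+\epsilon$ with high probability, which gives $\hat s\le s$ because the eigenvalues are ordered. If $s<r$, Theorem \ref{eigenvalue_theorem} says $\lambda_{s+1}(X)\to 2$ in probability, since $\theta_{s+1}\le 1$. If $s=r$, the same theorem covers the index $i=r+1>r$ and gives the same conclusion. This step can also be seen directly through Weyl interlacing. The perturbation $\frac1{np}V\Theta V^T$ restricted to spikes $s+1,\dots,r$ has rank $r-s$, and the first $s$ spikes contribute rank $s$. Therefore $\lambda_{s+1}(X)\le\lambda_1\bigl(\frac{1}{\sqrt{nq}}W\odot A+\frac1{np}\sum_{i>s}\theta_iv_iv_i^T\bigr)$, and the subcritical statement of Theorem \ref{eigenvalue_theorem} applied to this model (whose largest signal is $\theta_{s+1}\le1$) bounds the right-hand side by $2+o(1)$.

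The main obstacle is that the subcritical convergence in Theorem \ref{eigenvalue_theorem} is only stated in probability, without a rate. This is harmless here because $\epsilon$ is a fixed positive constant, so $\mathbb P(\lambda_{s+1}(X)>2+\epsilon)\to0$ follows directly from the definition of convergence in probability. Combining the two steps gives $\mathbb P(\hat s\ne s)\to0$, so $\hat s$ detects exactly $s$ spikes.

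One caveat: $\epsilon$ depends on the unknown $\theta_s$. If a data-independent threshold is preferred, I would replace $2+\epsilon$ by $2+\delta_n$ with $\delta_n\to0$ slowly enough to dominate both the error in the subcritical convergence and $C_7\theta_1\Lambda$. The same argument then goes through, because $\theta_s+1/\theta_s-2$ is a fixed positive constant.
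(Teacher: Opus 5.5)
Your proposal is correct and is essentially the argument the paper leaves implicit; the paper states this corollary without a separate proof. You count the eigenvalues of $X$ above $2+\epsilon$, then read off from Theorem \ref{eigenvalue_theorem} that $\lambda_s(X)$ stays near $\theta_s+1/\theta_s>2+\epsilon$ while $\lambda_{s+1}(X)\to 2$ in probability. One nit: your formula for $\epsilon$ is undefined when $s=0$, though any fixed $\epsilon>0$ works there, and since the eigenvalues are ordered, the union bound over $i\le s$ and the Weyl-interlacing detour are unnecessary.
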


\begin{theorem}[Doubly Sparse BBP: Eigenvectors]\label{eigenvector_theorem}
Under Assumption \ref{p_assumption}, we have the following behavior for the unit-norm eigenvector $u_i(X)$ of the $i$-th largest eigenvalue of $X$ in \eqref{model}:  
\begin{align*}
    \langle u_i(X), \frac{1}{\sqrt{np}}v_j \rangle^2 \xrightarrow{\P} 
    \begin{cases}
        1-\frac{1}{\theta _i^2} & \theta _i > 1 \\
        0 & \theta _i \leq 1 \text{ or } i\neq j
    \end{cases}
\end{align*}
where $v_j$'s are defined by \eqref{eq:def_spikes}, for all $i,j\in [r]$. More specifically, for any $\theta_i>1$ and $\gamma>0$, there exist some positive constants $c_1, C_2, C_3, C_5, C_7, C_8$, and $D>0$ such that for $ i\in [r]$, 
\begin{align*}
    \mathbb P \left(  \left| \langle u_i(X), \frac{1}{\sqrt{np}}v_i \rangle^2-\Big(1-\frac{1}{\theta_i^2}\Big)\right| \lesssim \max\{(\log(np))^{-\gamma}, ~ \tau^{-1/4}\} \right) \ge 1-\bm f_r(n)~,
\end{align*}
and, for $j\neq i\in [r]$, 
\begin{align*}
    \mathbb P \left( \langle u_i(X), \frac{1}{\sqrt{np}}v_j \rangle^2 \lesssim  \Lambda^2 \right) \ge 1-\bm f_r(n)~,
\end{align*}
hold where $\bm f_r(n)$ is defined by \eqref{eq:rate_func} and
$$\Lambda = \|\Theta\|_F\sqrt{C_5r^2\max\{(\log(np))^{-2\gamma}, ~ \tau^{-1/2}\}}~.$$
\end{theorem}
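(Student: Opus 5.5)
We follow the resolvent/determinant approach of Benaych-Georges and Nadakuditi, replacing orthogonal invariance by an isotropic local law for the sparse Wigner matrix together with concentration of the spikes. Write $H=\frac{1}{\sqrt{nq}}W\odot A$, $U=\frac{1}{\sqrt{np}}V\in\mathbb R^{n\times r}$, so that $X=H+U\Theta U^T$. We first condition on $H$ and on the support sizes $|{\rm supp}(b_i)|$, which concentrate around $np$ by Lemma \ref{support_size_independent}.

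For an eigenpair $(\lambda,u)$ of $X$ with $\lambda\notin{\rm spec}(H)$, the eigenvalue equation gives
\[
u=(\lambda-H)^{-1}U\Theta U^Tu .
\]
Hence $\lambda$ is an outlier iff $\det\bigl(I_r-\Theta^{1/2}U^T(\lambda-H)^{-1}U\Theta^{1/2}\bigr)=0$. Moreover, $w:=\Theta U^Tu$ lies in the null space of $I_r-U^T(\lambda-H)^{-1}U\Theta$.

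\textbf{Step 1: locate the outliers.} The key estimate is
\[
\bigl\|U^T(z-H)^{-1}U-m(z)I_r\bigr\|\lesssim \Lambda/\|\Theta\|_F
\]
uniformly for $z$ real at distance $\gtrsim\epsilon$ from $[-2,2]$. Here $m(z)$ is the Stieltjes transform of the semicircle, with the sign convention $m(z)=\int(z-x)^{-1}d\mu_{sc}(x)$. A matching bound holds for the derivative, $U^T(z-H)^{-2}U\approx m'(z)I_r$ in absolute value.

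The proof of this estimate has two parts.
\begin{itemize}
\item \emph{Norm of $H$.} Since $q\gg\log n/n$, Lemma \ref{semicirc_law_sparse_lemma} and the Augeri-type bound give $\|H\|\le 2+o(1)$ w.h.p.
\item \emph{Quadratic forms.} Conditional on $H$, the vectors $v_i$ have independent sub-Gaussian entries with covariance $pI$. Hanson--Wright applied to $(z-H)^{-1}$ then gives
\[
\tfrac{1}{np}v_i^T(z-H)^{-1}v_j=\delta_{ij}\,\tfrac{1}{n}{\rm tr}(z-H)^{-1}+O\bigl(\|(z-H)^{-1}\|_F/(np)\cdot\text{polylog}\bigr).
\]
The sparse Hanson--Wright of \cite{sparsehanson}, with Bernoulli masks, produces the $\exp(-c\,np/(\log np)^{2\gamma})$ term of $\bm f_r(n)$. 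The $r(r-1)$ prefactor comes from a union bound over pairs.
\end{itemize}
Finally, the local law for $H$ gives $\frac1n{\rm tr}(z-H)^{-1}=m(z)+O(\tau^{-1/2})$.

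The determinant equation then becomes $\prod_i(1-\theta_i m(\lambda))\approx0$. Since $m(\theta+1/\theta)=1/\theta$ for $\theta>1$, a perturbation argument locates the roots near $\lambda_i^\ast=\theta_i+1/\theta_i$. We would use a Rouché/Weyl-type perturbation argument for the symmetric $r\times r$ matrix, together with the lower bound on $|m'|$ away from the edge. Eigenvalue interlacing with $H$ handles both the counting and the case $\theta_i\le1$.

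\textbf{Step 2: eigenvector overlaps.} For the outlier $\lambda_i$, the normalization $\|u\|=1$ gives
\[
1=w^T U^T(\lambda_i-H)^{-2}U w\approx -m'(\lambda_i)\,\|w\|^2 .
\]
Step 1 forces $w$ to be close to the null vector of $I-m(\lambda_i)\Theta$. That null vector is $e_i$, up to an error governed by the gaps $|1-\theta_j/\theta_i|$, which are bounded below because the $\theta_j$ are distinct. This yields $\langle u,U e_j\rangle^2=(w_j/\theta_j)^2\lesssim\Lambda^2$ for $j\ne i$. For $j=i$,
\[
\langle u,Ue_i\rangle^2\approx\frac{1}{\theta_i^2}\cdot\frac{1}{-m'(\lambda_i^\ast)}=1-\frac1{\theta_i^2},
\]
using the identity $-m'(\theta+1/\theta)=1/(\theta^2-1)$. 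The errors in $m$ and $m'$ propagate to the stated rate $\max\{(\log np)^{-\gamma},\tau^{-1/4}\}$.

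For $\theta_i\le1$, the eigenvalue sticks to the edge. There we would use the fact that $-m'(z)\to\infty$ as $z\downarrow2$, which forces the overlap to vanish. This requires the isotropic law only down to a distance $\eta\to0$ slowly from the edge, and the $\tau^{-1/4}$ rate is compatible with this.

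\textbf{Main obstacle.} The hardest step is the uniform isotropic concentration of $U^T(z-H)^{-1}U$ and of $U^T(z-H)^{-2}U$ when both $p$ and $q$ are only barely supercritical. Sparse spikes make ordinary Hanson--Wright too weak, so we would rely on the sparse Hanson--Wright plus the bound $\|(z-H)^{-1}\|_F^2\le n/{\rm dist}(z,{\rm spec}H)^2$. Uniformity in $z$ would come from a net argument combined with Lipschitz continuity on compacts away from $[-2-\epsilon,2+\epsilon]$.
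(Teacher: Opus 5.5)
For $\theta_i>1$ your route is essentially the paper's, and it works. The resolvent identity reduces the problem to the $r\times r$ kernel equation. Hanson--Wright plus the diagonal local law give $U^TGU\approx mI_r$. Perturbing the kernel vector off $e_i$ gives the $O(\Lambda)$ cross-overlaps, and $1=w^TU^TG^2Uw$ with derivative control yields $1-\theta_i^{-2}$.

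Two of your steps are cleaner than the paper's:
\begin{itemize}
\item Your componentwise bound $(1-m\theta_j)w_j=\theta_j(Ew)_j$ replaces the paper's Davis--Kahan step, which is applied there to the non-symmetric matrix $I+\frac{1}{np}V^TRV\Theta$.
\item Your net argument makes explicit the uniformity in $z$ needed to evaluate at the random point $\lambda_i(X)$; the paper takes this from Vitali's theorem.
\end{itemize}
A small slip: it is $U^Tu$, not $w$, that lies in $\ker(I_r-U^TGU\Theta)$; $w$ lies in $\ker(I_r-\Theta U^TGU)$.

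One fix is needed in the concentration step. Your displayed error $\|G\|_F/(np)$, combined with $\|G\|_F^2\le n/\mathrm{dist}^2$, is of order $\sqrt n/(np)$, which does not vanish when $p\ll n^{-1/2}$. You need the $\sqrt p$ gain of a sparse inequality (variance proxy $p\|G\|_F^2$). Alternatively, do as the paper does: condition on $S=\mathrm{supp}(v_i)$ and apply ordinary Hanson--Wright to the block $G_S$, where $\|G_S\|_F^2\le|S|\,\|G\|^2\approx np\|G\|^2$. So ordinary Hanson--Wright is not too weak once you restrict to the support.

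The genuine gap is the case $\theta_i\le1$, which the paper's written proof does not treat either. Your sketch evaluates $U^TG(\lambda)^2U\approx -m'(\lambda)I_r$ at $\lambda=\lambda_i(X)$, but you only know $\lambda_i(H)\le\lambda_i(X)\to2$. So $\lambda_i(X)$ may lie in $[\lambda_i(H),\lambda_1(H)]$, possibly on $\mathrm{spec}(H)$, or above $\lambda_1(H)$ by an edge-fluctuation amount. There the identity $u=G(\lambda)Uw$ may fail and no isotropic law is available. A random eigenvalue does not approach $2$ ``slowly'' in any way you control.

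What works is the one-sided argument of Benaych-Georges--Nadakuditi. Let $(\rho_k,\mu_k)$ be the eigenpairs of $H$ and $G^+$ the pseudo-inverse of $\lambda-H$.
\begin{itemize}
\item Projecting $(\lambda-H)u=Uw$ onto $\ker(H-\lambda)$ gives $Uw\perp\ker(H-\lambda)$. Hence $1\ge w^TU^T(G^+)^2Uw$, and the terms with $\rho_k=\lambda$ contribute nothing.
\item Since $\lambda\ge\lambda_i(H)$, at most $i-1$ indices have $\rho_k>\lambda$; drop those nonnegative terms.
\item For a fixed $\eta>0$ you have $\lambda\le 2+\eta$ w.h.p.\ (and $\lambda_1(H)<2+\eta/2$), so $0<\lambda-\rho_k\le 2+\eta-\rho_k$ for the remaining terms.
\end{itemize}
This gives
\[
1\ \ge\ w^TU^TG(2+\eta)^2Uw-\sum_{k<i}\frac{\langle Uw,\mu_k\rangle^2}{(2+\eta-\rho_k)^2}\ \ge\ \bigl(-m'(2+\eta)-o_{\mathbb P}(1)\bigr)\|w\|^2 .
\]
This needs the isotropic estimate only at the fixed point $2+\eta$. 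The correction term is $O_{\mathbb P}(\eta^{-2}\|w\|^2/n)$ because $\mathbb E[\langle v_j,\mu_k\rangle^2\mid H]=p$. Since $-m'(2+\eta)=\int(2+\eta-x)^{-2}d\mu_{sc}(x)\to\infty$ as $\eta\downarrow0$, you get $\|\Theta U^Tu_i\|\to0$ in probability. Hence every overlap $\langle u_i,\frac{1}{\sqrt{np}}v_j\rangle$ vanishes, without a rate, which is all the statement claims in that regime.
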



\begin{corollary}[Weak Recovery]
    Under Assumption \ref{p_assumption}, we can use the $i$-th largest eigenvector as a candidate to weakly recover the spike vector $v_i$ if $\theta_i>1$ for $i\in [r]$.
\end{corollary}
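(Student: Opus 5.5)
The statement immediately above is the Weak Recovery corollary. I would derive it directly from Theorem \ref{eigenvector_theorem}, choosing the estimator $\mathcal M_i(X) := u_i(X)$, the unit eigenvector of the $i$-th largest eigenvalue of $X$. By Definition \ref{weaklyrecoverdef}, it suffices to show that $|\langle u_i(X), v_i\rangle|/\|v_i\|_2$ stays bounded away from zero with probability tending to one. Theorem \ref{eigenvector_theorem} controls the normalized overlap $\langle u_i(X), v_i/\sqrt{np}\rangle^2$, so the remaining work is to compare the normalization $\sqrt{np}$ with the true norm $\|v_i\|_2$.

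First, I would show that $\|v_i\|_2^2/(np) \to 1$ in probability. We have $\|v_i\|_2^2 = \sum_j (\tilde v_i)_j^2 (b_i)_j$. Conditioning on the support $S_i = \{j : (b_i)_j = 1\}$, Lemma \ref{support_size_independent} gives $|S_i|/(np) \to 1$, since $np\to\infty$ by Assumption \ref{p_assumption}. Given $S_i$, the sum is a sum of $|S_i|$ i.i.d. sub-exponential variables with mean $1$, because $\|(\tilde v_i)_j^2\|_{\psi_1}\le K^2$. Bernstein's inequality then gives $\big|\sum_{j\in S_i}(\tilde v_i)_j^2 - |S_i|\big| \le |S_i|^{3/4}$ with probability $1-\exp(-c|S_i|^{1/2})\to 1$. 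Combining the two facts yields $\|v_i\|_2^2 = np(1+o(1))$ with high probability.

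Second, for $\theta_i>1$, Theorem \ref{eigenvector_theorem} gives, with probability at least $1-\bm f_r(n)\to 1$,
\begin{equation*}
\Big|\langle u_i(X), \tfrac{1}{\sqrt{np}} v_i\rangle^2 - \Big(1-\tfrac{1}{\theta_i^2}\Big)\Big| \lesssim \max\{(\log(np))^{-\gamma}, \tau^{-1/4}\} = o(1).
\end{equation*}
Intersecting this event with the norm event from the first step, a union bound gives
\begin{equation*}
\frac{\langle u_i(X), v_i\rangle^2}{\|v_i\|_2^2} = \frac{np}{\|v_i\|_2^2}\,\langle u_i(X), \tfrac{1}{\sqrt{np}}v_i\rangle^2 = \Big(1-\tfrac{1}{\theta_i^2}\Big)(1+o(1)) + o(1),
\end{equation*}
which is at least $\tfrac12\big(1-\theta_i^{-2}\big)>0$ for large $n$. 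This proves weak recovery.

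The only real obstacle is the norm concentration in the first step. If Lemma \ref{support_size_independent} already covers $\|v_i\|_2^2$ rather than just the support size, that step is immediate. Otherwise the conditional Bernstein argument above handles it. Sign ambiguity is irrelevant, since the definition uses the absolute value of the inner product.
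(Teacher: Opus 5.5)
Your proposal is correct and takes the same route as the paper: the paper's proof sets $\mathcal M_i(X)=u_i(X)$ and reads off $\langle u_i(X),\frac{1}{\sqrt{np}}v_i\rangle^2\to 1-\frac{1}{\theta_i^2}$ from Theorem \ref{eigenvector_theorem}. Your extra step showing $\|v_i\|_2^2/(np)\to 1$ supplies the passage from the $\sqrt{np}$ normalization to $\|v_i\|_2$, which the paper leaves implicit. You can get it from Lemma \ref{support_size_independent} plus a conditional Bernstein bound, which needs only independence and a uniform $\psi_1$ bound, not identical distribution; alternatively, use the two-sided $R=I$ Hanson--Wright estimate inside the proof of Theorem \ref{spike_two_norm_control}.
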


\begin{proof}
    Recalling Definition \ref{weaklyrecoverdef}, we choose  estimators $\mathcal{M}_i=v_i(X)$ with unit norm achieved by performing the power method on the matrix $X$. By Theorem \ref{eigenvector_theorem}, we have $\langle \mathcal{M}_i(X),\frac{1}{\sqrt{np}}v_i \rangle^2 \to 1-\frac{1}{\theta_i^2}$ with high probability.
\end{proof}

%% file: sections/experiments.tex
\section{Numerical Illustration}\label{experiments}
In this section, we present experiments for various regimes considered in our theorems. For the first three figures, Figures~\ref{fig:regime_1}—\ref{fig:regime_3}, we examine the case of one spike ($r=1$). In all of these experiments, (a) demonstrates how the $\theta_1=3$ spike generates the top eigenvalue $\lambda_1(X)$, represented by the blue \textcolor{blue}{$\bm{\times}$} symbol, which lies outside the semicircular bulk and aligns with the predicted value $\theta_1+\frac{1}{\theta_1}$, represented by the orange circle \textcolor{orange}{$\bm{\circ}$} symbol. Furthermore, (b) compares eigenvector alignments between the estimated vector $u_1(X)$ and the true signal vector $v_1$ with the predicted value, $(1-\frac{1}{\theta_1^2})$, derived from Theorem~\ref{eigenvector_theorem}, as we increase the strength of the signal $\theta_1$. Notably, in the latter two figures, the error bars exhibit wider ranges as the concentration of the sparse vector $v_1$ is not as robust as the dense case. As seen in the fluctuation results of Theorem \ref{eigenvector_theorem}, this demonstrates how the fluctuations are significantly influenced by $p$. 

Finally, we present Figure~\ref{fig:two_spikes} depicting a case with multiple spikes ($r=2$). In this case, we consider double sparsity for spikes and Wigner matrix: $p=\frac{3(\log n)^2}{2n},$ and $q=\frac{5(\log n)^3}{n}$. Part (a) is similar to Figures~\ref{fig:regime_1}—\ref{fig:regime_3} but with two outliers. For (b), we present the entries of the top three eigenvectors with unit norm, $u_1(X),u_2(X),u_3(X)$, in the spectrum of (a). In this case, the top outlier eigenvectors $u_1(X)$ and $u_2(X)$ are sparse as they are aligned with the sparse signals $v_1,v_2$ based on Theorem \ref{eigenvector_theorem}; the third eigenvector is from the bulk and is delocalized.

\begin{figure}[ht]
\begin{minipage}{.6\textwidth}
    \centering
    \includegraphics[height=5cm]{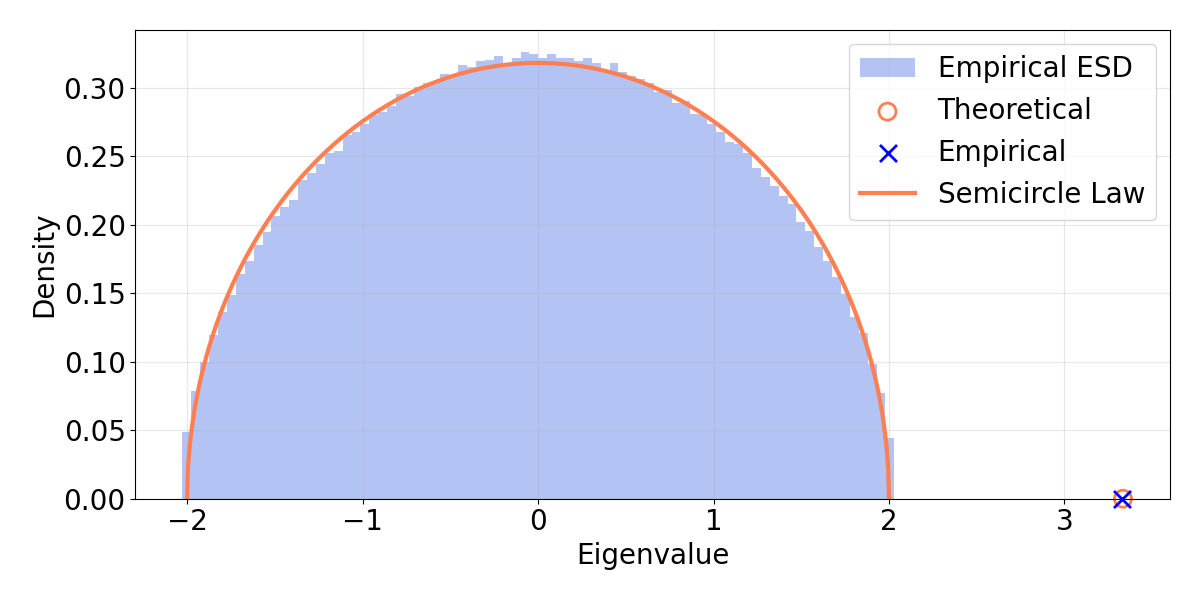}\\
    {\small (a)}
\end{minipage}%
\begin{minipage}{.39\textwidth}
    \centering
    \includegraphics[height=5cm]{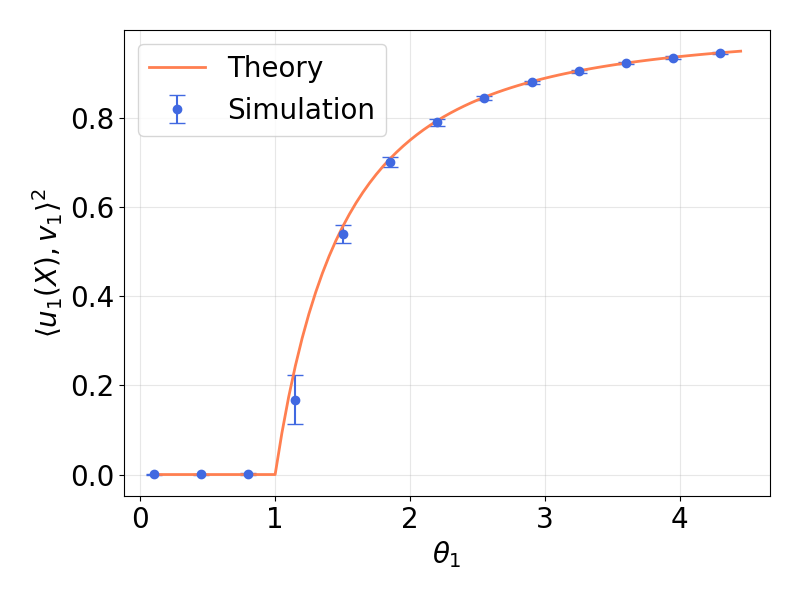}\\
    {\small (b)}
\end{minipage}
\caption{The dense spike ($p=1/2$) and sparse noise ($q=\frac{(\log n)^2}{n}$) case, when $r=1$ and $n=15000$.
(a) The eigenvalue distribution of $X$ compared to the theoretical prediction of the limiting eigenvalue distribution and the location of the outlier eigenvalue. Here $\theta_1 = 3$.
(b) The alignment of top eigenvectors with the spike vector $\langle u_1(X),v_1 \rangle^2$ for various signal-to-noise ratios $\theta_1$. Each experiment is run 30 times to obtain an average.}\label{fig:regime_1}
\end{figure}

\begin{figure}[ht]
\begin{minipage}{.6\textwidth} 
    \centering
    \includegraphics[height=5cm]{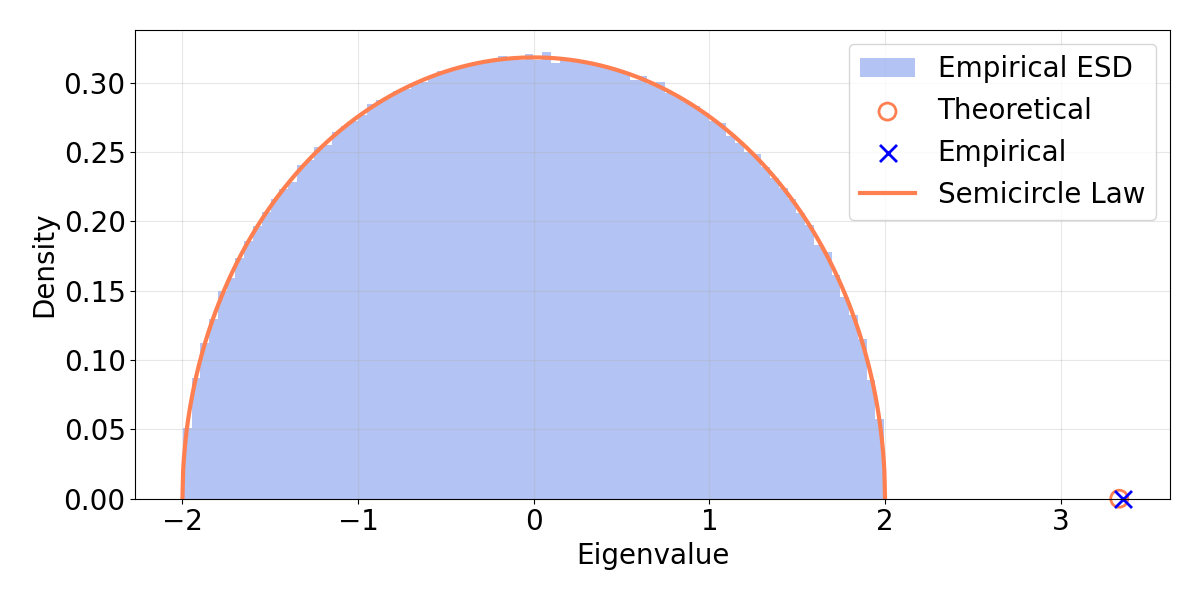}\\
    (a)
\end{minipage}%
\begin{minipage}{.39\textwidth}
    \centering
    \includegraphics[height=5cm]{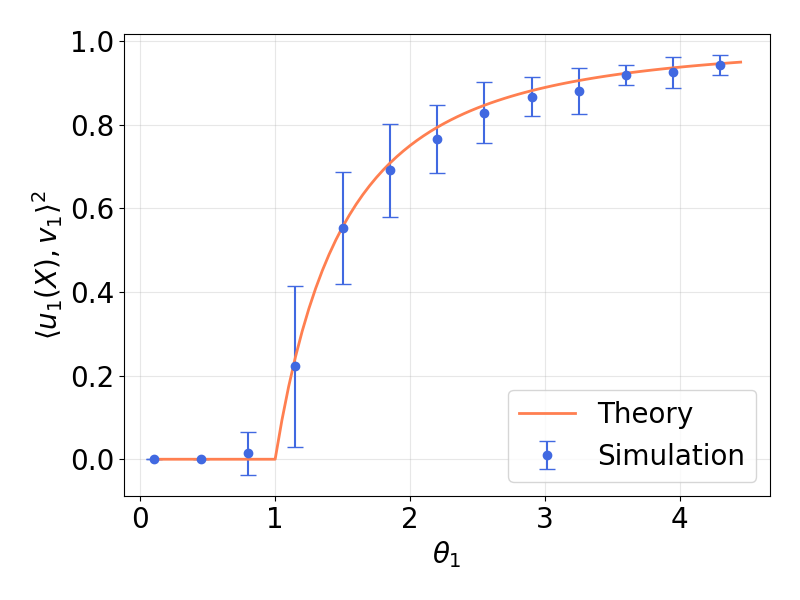}\\
    (b)
\end{minipage}
\caption{The sparse spike ($p=\frac{(\log n)^2}{n}$) and moderately dense noise ($q=1/2$) case, when $r=1$ and $n=15000$.
(a) The eigenvalue distribution of $X$ compared to the theoretical prediction of the limiting eigenvalue distribution and the location of the outlier eigenvalue. Here $\theta_1 = 3$.
(b) The alignment of top eigenvectors with the spike vector $\langle u_1(X),v_1 \rangle^2$ for various signal-to-noise ratios $\theta_1$. We take an average over 30 runs.}\label{fig:regime_2}
\end{figure}

\begin{figure}[ht]
\begin{minipage}{.6\textwidth}
    \centering
    \includegraphics[height=5cm]{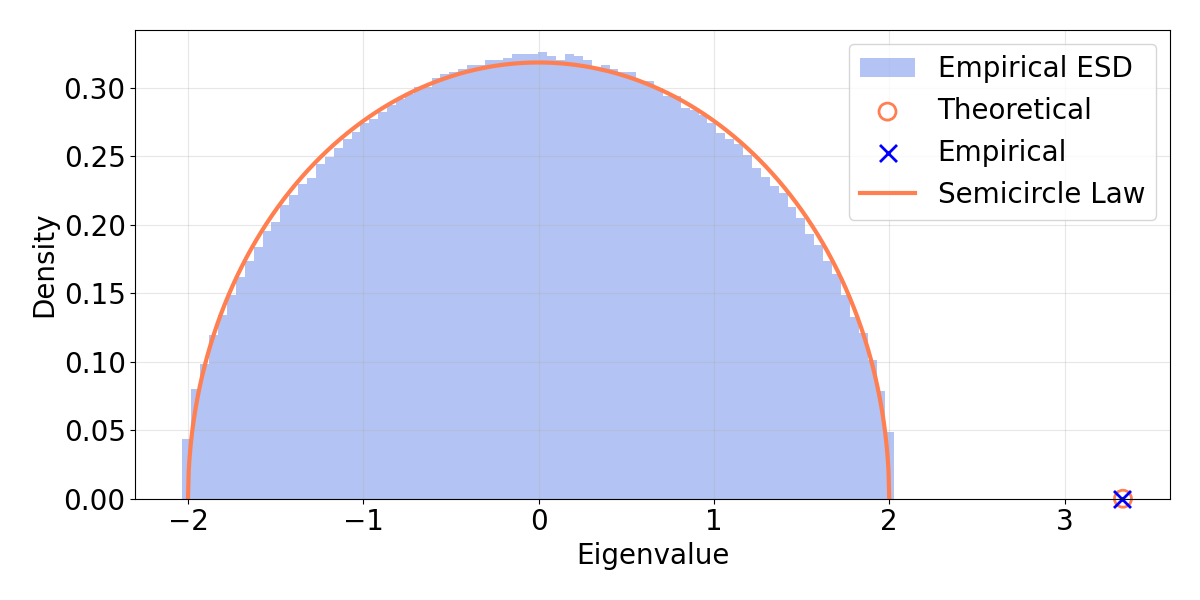}\\
    (a)
\end{minipage}%
\begin{minipage}{.39\textwidth}
    \centering
    \includegraphics[height=5cm]{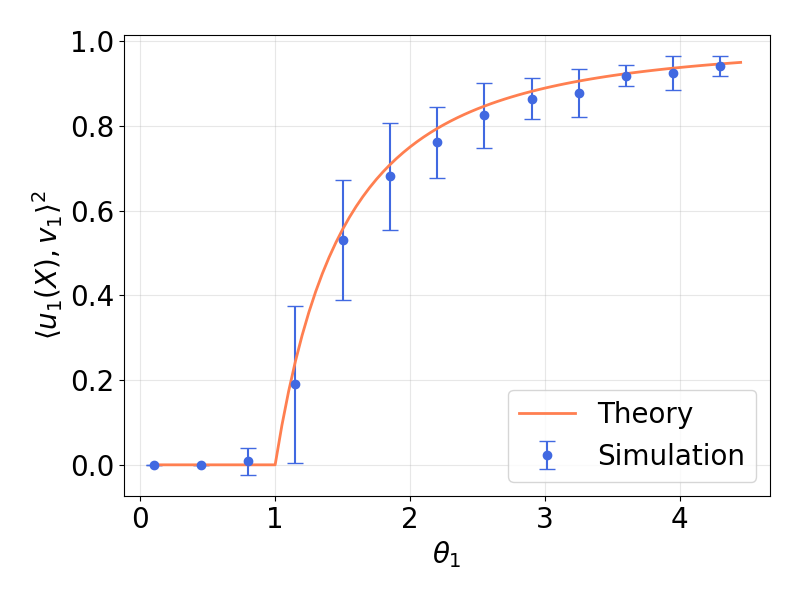}\\
    (b)
\end{minipage}
\caption{The sparse spike and sparse noise ($p=q=\frac{(\log n)^2}{n}$) case, when $r=1$ and $n=15000$.
(a) The eigenvalue distribution of $X$ compared to the theoretical prediction of the limiting eigenvalue distribution and the location of the outlier eigenvalue. Here $\theta_1 = 3$.
(b) The alignment of top eigenvectors with the spike vector $\langle u_1(X),v_1 \rangle^2$ for various signal-to-noise ratios $\theta_1$. We take an average over 30 runs. }\label{fig:regime_3}
\end{figure}

\begin{figure}[ht]\label{multispike}
\begin{minipage}{.75\textwidth}
    \centering
\includegraphics[height=5cm]{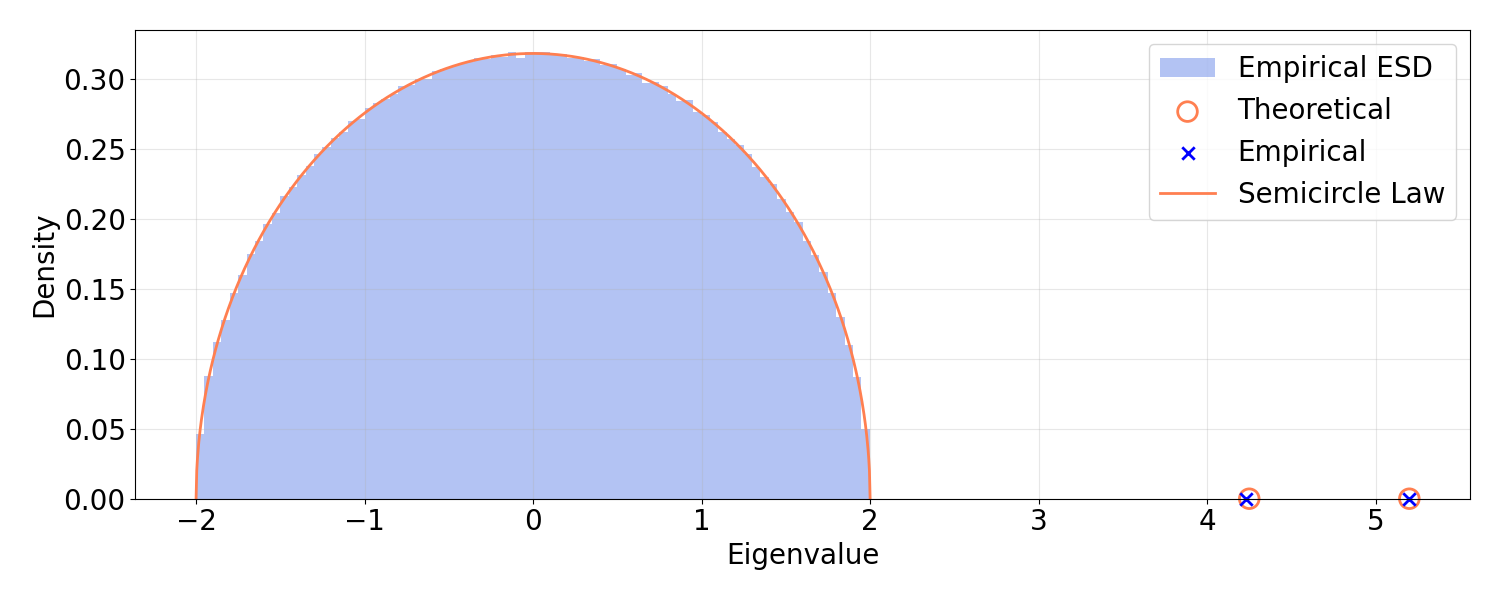}\\
    (a)
\end{minipage}%
\begin{minipage}{.15\textwidth}
    \centering
\includegraphics[height=5.2cm]{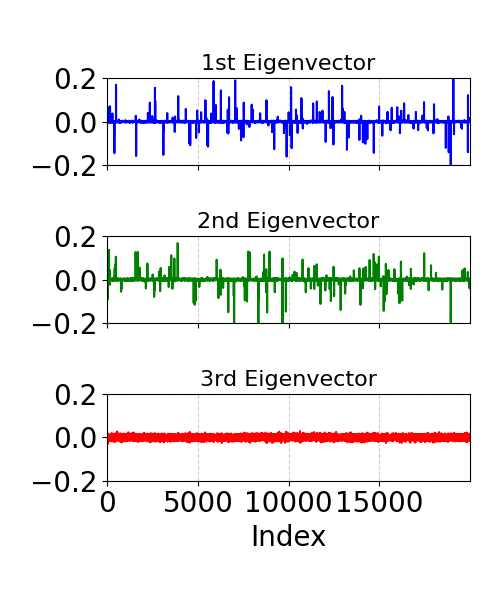}\\
    (b)
\end{minipage}
\caption{\small The sparse spike and sparse noise ($p=\frac{3(\log n)^2}{2n}, q=\frac{5(\log n)^3}{n}$) case, when $r=2$ and $n=20000$.
(a) The eigenvalue distribution of $X$ compared to the theoretical prediction of the limiting eigenvalue distribution and the location of the outlier eigenvalue. Here $\theta_1 = 5, \theta_2=4$.
(b) The plots of the entries of top eigenvectors with unit norm, $u_1(X),u_2(X),u_3(X)$ from (a). Notice that the first two eigenvectors corresponding to spikes are localized and sparse because of the sparse signals from Theorem~\ref{eigenvector_theorem}; while the third eigenvector from the bulk is delocalized.}\label{fig:two_spikes}
\end{figure}

%% file: sections/future_directions.tex
\section{Future Directions}\label{sec:future}

\subsection{Extensions Within BBP}

An avenue of exploration directly related to this work involves a result produced in \cite{péché2005largesteigenvaluesmallrank} which extends the BBP to an $n$-dependent number of spikes $r$. Within the context of this work, we see that the reliance on the finite nature of the $r$ by $r$ matrix (specifically with regards to continuity and convergence properties) serves as the main obstacle to adding an asymptotically growing number of spikes. However, these technical limitations may be removable.


Additionally, one may consider expanding the regime of sparsity for $q$ to the critical and sub-critical regimes. As discussed in Section \ref{doub_sparse_lit_review}, shifting to the assumption of bounded entries in the noise matrix allows for critical sparsity ($q \gtrsim \frac{\log n}{n})$ for our result of no spectral outliers, but we would still need a local law result in the style of Lemma \ref{lemma_2} for critical sparsity. Further, we believe no BBP style result could be produced in the sub-optimal regime as there would be no way to reliably distinguish between outliers generated by the spikes or spurious outliers from the noise. Thus, for $q\lesssim \frac{\log n}{n}$, a non-spectral approach would be required. 

\subsection{Sparse PCA: Beyond Spectral Methods}\label{beyondbbp}

\paragraph{Contiguity, AMP, and Statistical Physics. } While PCA provides a simple solution to both problems in the case of $\theta >1$, the spectral threshold does not always align with the \textit{information-theoretical threshold}. This misalignment stems from PCA assuming very little about the spike; indeed, information can be extracted from knowledge of the spike prior $\mathcal V$ in the Bayesian setting. For Gaussian, dense noise ($q=1$), a series of works showed that, for certain spike priors and low enough $\theta$, the planted and null models are contiguous, meaning there is no statistic that can distinguish between the spiked and null models. \cite{montanari2014limitationspectralmethodsgaussian} and \cite{johnstone2018testinghighdimensionalspikedmodels} proved this for deterministic and spherical priors, respectively, and  \cite{perry2016optimalitysuboptimalitypcaspiked} followed with results for certain dense spike priors (spherical, Rademacher, any ``sufficiently'' sub-Gaussian). However, the latter showed PCA is sub-optimal for non-Gaussian Wigner matrices, leaving the door open for methods that improve on the BBP threshold. Further, their contiguity results for Gaussian Wigner extend only to the above-mentioned spike priors, leaving room for results on Gaussian Wigner models where the spike is drawn from a different prior (such as a sparse one). Various techniques have arisen to construct methods that exploit the signal's known structure. Born from a statistical physics setting and first applied to problems in compressed sensing \cite{Bayati_2011, Donoho_2009}, Approximate Message Passing (AMP) algorithms are optimal among all polynomial-time algorithms when applied to the spiked Wigner model \cite{Bayati_2011, montanari2014limitationspectralmethodsgaussian, deshpande2014informationtheoreticallyoptimalsparsepca}. Their methods work particularly well when exploring information-theoretic thresholds due to their performance being characterized by a simple deterministic recursion called ``state evolution'' in the high-dimensional limit. When considering a sparse spike, employing such methods allows for solving the problem below the threshold $\theta <1$. Using standard statistical physics heuristic based methods, \cite{Lesieur2015} employed these AMP techniques to analyze the minimum mean squared error and provide evidence of the threshold falling below the spectral one when the spike is sufficiently sparse. While these results serve as fortified conjectures, they prompted further work by Perry et al. \cite{perry2016optimalitysuboptimalitypcaspiked} and Banks et al. \cite{banks2017informationtheoreticboundsphasetransitions} to explore the values of $p$ and $\theta$ for which the problem can be solved in the Gaussian Wigner noise models. Both papers used careful conditioning on the second moment of the likelihood ratio to discover similar thresholds for the problems of distinguishability and recovery. For $p\le p_*\approx0.6$, the threshold for $\theta$ falls below the BBP threshold. For constant $p$, a wide gap emerged between the solvability thresholds; however, in the limit $p\to0$, their gap shrinks to $\sqrt{2}$. This limit is taken outside of the context of their work, as they consider $p$ to be a constant throughout their proofs. For simplicity, the two works used a sparse Rademacher spike prior in which $\tilde v~\iidsim  ~\text{Unif}(\{\pm 1\})$ and $b~\iidsim~\mathcal B_1$. However, they claim the Rademacher structure can naturally be extended in many of their results to any spike prior with bounded support. In fact, Alaoui et al. \cite{El_Alaoui_2020} adapted the above mentioned statistical physics methods of free energy to provide rigorous results and a generalized threshold stylized to the choice of any bounded prior, thus closing the $\sqrt{2}$ gap for the sparse Rademacher spike prior. Moreover, all of the above results consider the prior to be controlled by an $n$-independent, constant $p$, whereas our work allows for sparsity $p$ to scale with $n$. 
 
\paragraph{Thresholding. } An avenue of study tangential to this current work, though pertinent to the understanding of recovering the sparse spike, involves a non-spectral approach of thresholding the entries of the spiked matrix to construct a polynomial-time algorithm for recovery. Indeed, in upcoming work, we have proved a series of results involving thresholding for our doubly sparse spiked matrix model. In the context of this paper, we provide a necessary literature review of this method's impact on the problem of recovery. Thresholding has proven to be an effective tactic for providing polynomial-time algorithms to solve a spiked model in the sparse setting. In the Wishart model where $1>\theta =\Theta(1) $ and $v$ is sparse in a suitable basis (such as the wavelet domain), Johnstone first proposed a simple two-step diagonal thresholding algorithm, which begins with an initial pre-processing step to reduce the space, followed by ordinary PCA \cite{johnstone2009consistency}. Narrowing the spike prior to Sparse Rademacher, Wainwright and Amini improved Johnstone's sparsity assumption of $p\lesssim\frac{1}{n}$  to $p\lesssim \frac{1}{\sqrt{n\log n}}$ \cite{Amini_2009}. Shifting to a thresholding method on the covariance, the sparsity condition relaxes to as large as $p\lesssim \frac{1}{\sqrt{n}}$ \cite{Krauthgamer_2015, deshpande2016sparsepcacovariancethresholding}. Considering the possibility of exhaustively searching over all $\binom{n}{np}$ possible support sizes (and thus ignoring any computational consideration), recovery can be achieved for $p\lesssim \frac{1}{\log n}$ \cite{paul2012augmentedsparseprincipalcomponent, vu2012minimaxratesestimationsparse, Cai_2013}, but the covariance thresholding still holds as the best known polynomial-time algorithm. With knowledge that the problem can be solved in an information-theoretical sense for constant values of $p$ (and an appropriately chosen $\theta$ depending on the value of $p$), the question naturally arises: why are there no known polynomial-time algorithms to match these positivity results? Reducing the problem from the \textit{planted clique} problem would seem to support the notion that the problem exhibits a ``possible but hard'' regime when $\frac{1}{\sqrt{n}}\ll p\ll 1$. Under the spike prior assumption of bounded entries with $\|v \|\to 1$ and fixed spike support $np$, Ding et al. explore this very question and provide an algorithm that reveals a precise tradeoff between its sub-exponential runtime and sparsity $p$ \cite{ding2022subexponentialtimealgorithmssparsepca}. Further, while all the previously mentioned works only consider the Wishart model, the work of Ding et al. extends these thresholding results to both models and suggests that the thresholding outcomes as a whole are equivalent across both models\footnote{Further, as discussed immediately following the introduction of Theorem \ref{general_bbp}, there is evidence that the two spiked models belong to the same equivalence class.}. 

We have developed two different thresholding algorithms for our model, which we intend to include in a companion paper following this work. These two algorithms along with our BBP result cover different but incomplete regimes of sparsity and $\theta$ values for the model in \eqref{model}.


%% file: sections/preliminary.tex
\section{Preliminary Lemmas}\label{prelim_lemmas}

We begin by establishing necessary lemmas regarding the behavior of the sparse noise matrix, which we use in our main BBP proofs. We first define $$m(z):=\frac{1}{2\pi}\int_{-2}^{2} \frac{\sqrt{4-x}}{x-z}dx=\frac{-z+\sqrt{z^2-4}}{2}, \hspace{3mm} z\in\mathbb C\backslash[-2,2]$$ to be the Stieltjes transform of the semicircular distribution.

\subsection{No Spectral Outliers for Sparse Wigner Matrix}
This first lemma shows that, when $q\gg \frac{\log(n)}{n}$, the eigenvalues of the sparse noise stay within $(-2-\epsilon,2+\epsilon)$ for $\epsilon>0$ with high probability. This ensures that, if we observe an eigenvalue outside of this interval, it is a result of the spike (with high probability). 

\begin{lemma}{Let $A$ be a symmetric matrix satisfying Assumption \ref{noise_mask_assumption}, and $W$ be a Wigner matrix satisfying Assumption \ref{wigner_subg_assumption}. Then, for any $\epsilon>0$ and $q\gg \frac{\log(n)}{n}$, we have $$\mathbb P \left( \lambda_1(\frac{1}{\sqrt{nq}}W\odot A)>2+\epsilon\right) \le \exp(-C_8nq)~,$$ where $C_8$ is the constant \eqref{c_8_constant} defined within the proof.}\label{semicirc_law_sparse_lemma}
\end{lemma}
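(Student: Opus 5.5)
The paper's literature review names \cite{augeri2023largedeviationslargesteigenvalue} as the essential ingredient, so I would build the argument around Augeri's large deviation principle for the largest eigenvalue of sparse Wigner matrices with sub-Gaussian entries in the regime $q\gg \frac{\log n}{n}$. Write $Y:=\frac{1}{\sqrt{nq}}W\odot A$. First I would check that $Y$ falls in the class Augeri treats. The entries $W_{ij}A_{ij}$ are independent up to symmetry, centered, with variance $q$ off the diagonal. Assumption \ref{wigner_subg_assumption} gives a uniform bound $\Lambda_{i,j}(t)\le C t^2$ on the log-Laplace transforms. The mask is $A_{ij}\sim\mathrm{Ber}(q)$ by Assumption \ref{noise_mask_assumption}. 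These are the hypotheses of her sharp sub-Gaussian/sparse setting. The diagonal has variance $2$, but it contributes at most $\max_i |W_{ii}A_{ii}|/\sqrt{nq}$, which is $O(\sqrt{\log n/(nq)})=o(1)$ with probability $1-n^{-D}$ by sub-Gaussianity and a union bound. So I would either strip it by Weyl's inequality at the cost of $\epsilon/2$, or note that her theorem allows it.

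Second, I would invoke the upper-tail LDP at speed $nq$. It says that for every $x>2$,
\begin{equation*}
\limsup_{n\to\infty}\frac{1}{nq}\log \mathbb P\big(\lambda_1(Y)\ge x\big)\le -I(x),
\end{equation*}
with a rate function $I$ that is strictly positive on $(2,\infty)$. In the sharp sub-Gaussian case $I$ is explicit, for instance of the form $\frac{1}{2}\big(\frac{x^2}{4}\wedge\ldots\big)$ or the Bernoulli-type $x^2\log(\cdot)$ expression. I only need positivity at $x=2+\epsilon/2$. From the limsup I then get, for $n$ large, $\mathbb P(\lambda_1(Y)>2+\epsilon/2)\le \exp(-\tfrac12 I(2+\epsilon/2)\,nq)$. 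I would set $C_8:=\tfrac12 I(2+\epsilon/2)$, adjusted for the diagonal correction. The diagonal event has probability $n^{-D}$, and $n^{-D}\le e^{-C_8 nq}$ only if $nq\lesssim \log n$. That fails here, so I would handle the diagonal with a sharper bound instead. Since $|W_{ii}A_{ii}|$ is nonzero only with probability $q$, the probability that any diagonal entry exceeds $\epsilon\sqrt{nq}/4$ is at most $n q\, e^{-c\epsilon^2 nq}$, which is absorbed into $e^{-C_8 nq}$ after shrinking $C_8$.

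The main obstacle is nonuniformity: the LDP statement is asymptotic, while the lemma asks for an explicit exponential bound. If Augeri's theorem only gives a limsup, I would accept the lemma for $n\ge n_0(\epsilon)$, which is all the later results need. Alternatively, I would reproduce the two-step structure of her proof. The first step bounds $\mathbb E\lambda_1(Y)\le 2+o(1)$ for $q\gg\log n/n$, using the trace/moment method or the Bandeira–van Handel type bound adapted to sub-Gaussian entries. The second step is Talagrand-type concentration of $\lambda_1$ around its mean at scale $1/\sqrt{nq}$. The Lipschitz constant of $\lambda_1$ in the entries is $O(1/\sqrt{nq})$ and the entries are sub-Gaussian, so this would give $\exp(-c\epsilon^2 nq)$ deviations. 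The delicate point is that heavy atoms in the mask spoil naive Gaussian concentration. To handle it I would truncate $W$ at level $\sqrt{nq}$ and control the few large entries separately.
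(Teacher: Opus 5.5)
Your main line is the paper's: both arguments rest on Augeri's upper-tail large deviation principle at speed $nq$ for $\lambda_1(\frac{1}{\sqrt{nq}}W\odot A)$ and on positivity of its rate function at $2+\epsilon$. The paper goes one step further. It lower-bounds the explicit variational formula $I(\lambda)=\inf\{\frac{r^2}{2\alpha}+h_L(s): r-m(\lambda)s=\lambda,\ r\ge 0,\ s\ge 1\}$, via $h_L(s)\ge s-1$ and minimizing a quadratic in $s$, to get the explicit constant $C_8=\min\{\frac{1}{2\alpha},1\}$, independent of $\epsilon$. Your $C_8=\frac12 I(2+\epsilon/2)$ depends on $\epsilon$. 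That is harmless, because the lemma is only used later with a fixed $\epsilon<\epsilon_*$.

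You are more careful than the paper on two points it glosses over. First, an LDP only controls a $\limsup$, so the bound holds for $n\ge n_0(\epsilon)$ after giving up part of the rate; the paper quotes it as if it held for every $n$. Second, the variance-$2$ diagonal must either be covered by the cited theorem or removed, and your Weyl-plus-tail-bound step removes it at the correct speed $e^{-c\epsilon^2 nq}$.

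The one real omission is the non-sparse end of the range. Augeri's theorem is stated for $\frac{\log n}{n}\ll q\ll 1$, not for all $q\gg\frac{\log n}{n}$. So your primary argument says nothing when $q=\Theta(1)$. The lemma's hypothesis allows that case, and the paper uses it in its numerical experiments with $q=1/2$. The paper treats it separately: for constant $q$, $\frac{1}{\sqrt q}W\odot A$ is an ordinary Wigner matrix with finite fourth moment, so the strong Bai--Yin theorem applies. You need the same case split.

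Your second route (a bound $\mathbb E\lambda_1\le 2+o(1)$ plus Talagrand's convex-Lipschitz concentration) would cover constant $q$. It would even give an exponential rate there, which the almost-sure Bai--Yin statement alone does not. However, to get deviations of order $e^{-c\epsilon^2 nq}$ you must truncate $W$ at a fixed level $K$, not at $\sqrt{nq}$. For entries bounded by $K$, Talagrand gives $\mathbb P(\lambda_1\ge \mathrm{med}+t)\le 4e^{-ct^2nq/K^2}$, and truncation at $\sqrt{nq}$ makes this vacuous. You would then control the sparse matrix of entries exceeding $K$ separately.
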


\begin{proof}
    From \cite[Theorem 1.4]{augeri2023largedeviationslargesteigenvalue}, the condition $\frac{\log n}{n} \ll q\ll 1$ implies 
    \begin{align*}
        \mathbb P\Big(\lambda_1(\frac{1}{\sqrt{nq}}W\odot A)>2+\epsilon\Big)\leq\exp(-I(2+\epsilon)nq)~, 
    \end{align*}
    where $I(\lambda)$ is defined to be 
    \begin{align*}
        I(\lambda)=\inf\{\frac{r^2}{2\alpha }+h_L(s) : r-m(\lambda)s=\lambda, r\geq0, s\geq 1 \}~,
    \end{align*}
    with 
    \begin{align*}
        h_L(s) = \sup_{\xi\in \mathbb R}\{\xi s-L(\xi)+1 \}~, \hspace{.2cm} L(\xi)=\mathbb E[\exp(\xi W_{1,2}^2)]~, \hspace{.2cm} \text{and} \hspace{.2cm} \alpha := \lim_{\xi\to \infty}\frac{2\log\mathbb E[\exp\xi W_{i,j}^2]}{\xi^2}<\infty~.
    \end{align*}
    To show $\frac{1}{\sqrt{nq}}W\odot A$ has no outliers in the semicircular distribution defined on $(-2-\epsilon,2+\epsilon)$, it suffices to show $I(2+\epsilon)>0$, seeing as $nq\to\infty$ by the condition $q\gg \frac{\log n}{n}$. The positive nature of the rate function $I(2+\epsilon)>0$ is noted in their work as Remark 1.5 (vi); however, we are concerned with finding an explicit lower bound for $I(2+\epsilon)$.
    
    We begin by noticing the square of the sub-Gaussian entries $W_{1,2}$ of $W$ are sub-exponential with $\|W_{1,2}^2\|_{\psi_1}=\sup_{p\ge1}p^{-1}(\mathbb E |W_{1,2}^2|^p)^{1/p}=\inf\{K>0 : \mathbb E \exp(W_{1,2}^2/K)\le 2 \}$ (see \cite{vershynin2018high}). Without loss of generality, we assume $\|W_{1,2}^2\|_{\psi_1}=1$. Thus, $\mathbb E \exp(W_{1,2}^2)\le 2$.
    We see 
    \begin{align*}
        h_L(s) = \sup_{\xi}\{\xi s-\mathbb E[\exp(\xi W_{1,2}^2)]+1 \} \ge s-\mathbb E[\exp( W_{1,2}^2)]+1\ge s-2+1=s-1 ~.
    \end{align*}
    where we chose $\xi=\|W_{1,2}^2\|_{\psi_1}=1$.
    Thus, we have
    \begin{align*}
        I(\lambda)\ge\inf_{s \geq 1} \left\{ \frac{\lambda^2+2\lambda m(\lambda)s+m(\lambda)^2s^2}{2\alpha} +s-1\right \}:=\inf_{s \geq 1} \{ g(s)\}~.
    \end{align*}
    Taking the first and second derivatives with respect to $s$, we find
    \begin{align*}
        g'(s)=\frac{\lambda m(\lambda)}{\alpha}+\frac{m(\lambda)^2s}{\alpha}+1~ \hspace{.2cm} \text{and} \hspace{.2cm} g''(s)=\frac{m(\lambda)^2}{\alpha}~.
    \end{align*}
    As the second derivative is positive for all values $s\geq1$, we can achieve the infimum as a minimum and are concerned with finding the minimum of this convex function. We see the minimum of $g(s)$ occurs at the zero $$s_0=-\lambda/m(\lambda) -\alpha/(m(\lambda))^2~.$$ When $\alpha\ge1,$ we see $s_0\le 1$, so the infimum occurs at $s=1$. When $\alpha<1$, the infimum is achieved at $s_0>1$. We thus factor $$g(s)=\frac{\lambda^2+2\lambda m(\lambda)s+m(\lambda)^2s^2}{2\alpha}+s-1=\frac{m(\lambda)^2(s+\frac{\lambda}{m(\lambda)})^2}{2\alpha}+s-1~,$$ and plug in our computed root $s_0$ to find: $$\inf_{s \geq 1} \{ g(s)\}=\begin{cases} \frac{(m(\lambda)+\lambda)^2}{2\alpha}, & \alpha\ge 1 \\ -\frac{\lambda}{m(\lambda)}-\frac{\alpha}{2m(\lambda)^2}-1, & \alpha <1
    \end{cases}~.$$ 
    To find an explicit constant, we now plug in $\lambda = 2+\epsilon$ and compute an lower bound for this infimum. For the latter function, one can inspect the function to see the infimum is lower bounded by $1$ when $\alpha\le1$. When $\alpha> 1,$ however, the infimum dips beneath the value of $1$. Observe how $$\frac{(m(2+\epsilon)+2+\epsilon)^2}{2\alpha}=\frac{(2+\epsilon)^2+(2+\epsilon)\sqrt{\epsilon^2+4\epsilon}+\epsilon^2+4\epsilon}{8\alpha}\ge\frac{1}{2\alpha}~.$$ Therefore, we let \begin{align}
        C_8:=\min\left\{\frac{1}{2\alpha},1\right\}~,\label{c_8_constant}
    \end{align}
    and reach our desired conclusion. 

    The result of \cite{augeri2023largedeviationslargesteigenvalue}, and thus the above analysis, does not hold for constant $q$ values; however, when $q$ is constant and does not depend on $n$, we return to a classical Wigner matrix with entries independent of the dimension. Thus, we can use Theorem 2.3.24 (Strong Bai-Yin Theorem) in \cite{tao2023topics} to achieve almost sure convergence for constant $q$. The theorem requires the entries of matrix to be centered with variance 1 and a finite fourth moment. Investigating the fourth moment of our matrix, we see $\mathbb E [(\frac{1}{\sqrt{q}}W\odot A)_{ij}^4]=\frac{1}{q^2}\mathbb E[W_{ij}^4]\mathbb E[A_{ij}^4]=\frac{1}{q}\mathbb E[W_{ij}^4]$. Seeing as $q$ is a constant and $W_{ij}$ is a sub-Gaussian random variable (and therefore bounded in its fourth moment by Assumption \ref{wigner_subg_assumption}), we see the conditions are satisfied. Therefore, when $q=\Theta(1)$, our result holds with probability $1.$
\end{proof}


\subsection{Local Law for Sparse Wigner Matrix}
Next, we prove a lemma which allows us to exploit an isotropic local law to show that the elements of the resolvent matrix $R_n(z)=(\frac{1}{\sqrt{nq}}W\odot A-zI)^{-1}$ are well approximated by the Stieltjes transform $m(z)$ outside the semicircular bulk. 
\begin{lemma}\label{lemma_2}
    Consider the resolvent $R_n(z)=(\frac{1}{\sqrt{nq}}W \odot A-zI)^{-1}$. For any $D>0$, $\epsilon>0$, and $\delta>0$, there exist a constant $C_1>0$ and $n_0=n(\delta, D, K_2, \epsilon)$ such that $q\geq C_1\frac{\log n}{\delta^4n}$ implies 
    $$ \mathbb P\left(\max_{1\leq i \leq n}|R_{n}(z)_{ii}-m(z)|\leq \bar C_1\delta\right) \geq 1-n^{-D}$$ for $z \not\in (-2-\epsilon, 2+\epsilon)$ and $n\ge n_0$ where $\bar C_1$  is a large enough constant depending on $z$ alone. 
\end{lemma}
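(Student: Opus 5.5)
We propose to prove Lemma \ref{lemma_2} by a Schur complement (self-consistent equation) argument outside the spectrum. Throughout we work on the event $\mathcal E$ of Lemma \ref{semicirc_law_sparse_lemma} (together with its analogue for $-\frac{1}{\sqrt{nq}}W\odot A$), on which $\|H\|\le 2+\epsilon/2$ with $H:=\frac{1}{\sqrt{nq}}W\odot A$. Since $z\notin(-2-\epsilon,2+\epsilon)$, the resolvent on $\mathcal E$ satisfies $\|R_n(z)\|\le 2/\epsilon$ deterministically. This removes the need for any bootstrapping in $\operatorname{Im} z$: away from the bulk, the resolvent is a priori bounded. The failure probability of $\mathcal E$ is $\exp(-C_8nq)\le n^{-D}$ once $q\ge C_1\log n/n$ with $C_1$ large.

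The first step is Schur's formula
\[
R_{ii} = \frac{1}{H_{ii} - z - h_i^T R^{(i)} h_i},
\]
where $h_i$ is the $i$-th column of $H$ with its $i$-th entry removed, and $R^{(i)}$ is the resolvent of the minor with row and column $i$ deleted. By Cauchy interlacing, $\|R^{(i)}\|\le 2/\epsilon$ on $\mathcal E$ as well. We then decompose
\[
h_i^TR^{(i)}h_i = \frac1n\operatorname{tr}R^{(i)} + Z_i.
\]
Here $Z_i$ is the fluctuation of the quadratic form in the independent vector $h_i$, whose entries are $W_{ij}A_{ij}/\sqrt{nq}$, conditional on the minor.

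To control $Z_i$ we use a sparse Hanson--Wright inequality, as in \cite{sparsehanson}. The entries of $\sqrt{nq}\,h_i$ have variance one but are only sub-Gaussian times Bernoulli$(q)$, so their $\psi_2$ norm is of order $q^{-1/2}$. The usual Hanson--Wright bound is therefore too weak. Instead, one uses the version adapted to sparse entries, whose tail is of the form
\[
\exp\Big(-c\min\Big\{\frac{t^2 nq}{\|R^{(i)}\|^2},\frac{t\,nq}{\|R^{(i)}\|}\Big\}\Big),
\]
where the second moment uses $\|R^{(i)}\|_F^2/n^2\le \|R^{(i)}\|^2/n$. Taking $t=\delta$ and $nq\ge C_1\log n/\delta^4$ gives $|Z_i|\le\delta$ with probability at least $1-n^{-D-1}$. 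The diagonal term $|H_{ii}|\lesssim \sqrt{\log n/(nq)}\le\delta$ by the sub-Gaussian tail and the scaling. A union bound over $i$ then yields the uniform statement.

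Next we replace $\frac1n\operatorname{tr}R^{(i)}$ by $s_n:=\frac1n\operatorname{tr}R$. Interlacing gives $|\frac1n\operatorname{tr}R^{(i)}-s_n|\le C/(n\,\mathrm{dist}(z,\mathrm{spec}))\le C/(n\epsilon)$. Hence, uniformly in $i$,
\[
R_{ii} = \frac{1}{-z - s_n + O(\delta)}.
\]
Averaging over $i$ gives the perturbed self-consistent equation $s_n(z+s_n)+1=O(\delta)$. We use that $|z+s_n|$ is bounded below: on $\mathcal E$, $\operatorname{sgn}(\operatorname{Re} s_n)$ is opposite to that of $\operatorname{Re} z$ for real $z$, and in general $1/R_{ii}$ is bounded by $\|H\|+|z|$.

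The main obstacle is the stability of this quadratic equation. The solutions are $m(z)$ and $1/m(z)$, separated by $|m-1/m|=|\sqrt{z^2-4}|\ge c(\epsilon)>0$ outside $(-2-\epsilon,2+\epsilon)$. Thus $|s_n-m|\le C(z)\delta$ provided we select the correct root. Root selection is done by continuity. Along a path from $z$ to $\infty$ staying outside the interval, $s_n$ is continuous on $\mathcal E$ and $s_n\sim -1/z$ at infinity, as is $m$. Meanwhile $1/m\sim -z$ is incompatible with the bound $|s_n|\le 2/\epsilon$. A simple lattice-plus-Lipschitz argument makes this uniform in the needed $z$. Finally, we plug back in to get
\[
|R_{ii}-m|=\Big|\frac{1}{-z-s_n+O(\delta)}-\frac{1}{-z-m}\Big|\le \bar C_1\delta,
\]
using $-z-m=1/m$. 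The constant $\bar C_1$ depends on $z$ through $\epsilon$ and $|m(z)|$.
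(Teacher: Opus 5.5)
Your proposal is correct in substance, but it proves the lemma by a genuinely different route from the paper. The paper does no Schur-complement analysis of its own. It takes the sparse local law of \cite[Theorem 12]{sparsehanson} as a black box on the complex domain $\{|E|\le 10,\ n^{-1+\epsilon_0}\le \eta\le 1\}$, asserting that the cap $|E|\le 10$ is inessential. It then moves from $z_0=E+i\eta_0$, with $\eta_0\asymp \kappa^2 n^{-1+\epsilon_0}$, down to the real point $E$. It does this by expanding $w^TR(z)w$ in the eigenbasis of $\frac{1}{\sqrt{nq}}W\odot A$, bounding the imaginary part and the real-part discrepancy by $O(n^{-1+\epsilon_0})$, and adding a mean-value bound on $m(E)-m(z_0)$.

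You instead work directly on the real axis. Since $\|R\|,\|R^{(i)}\|\le 2/\epsilon$ a priori on the no-outlier event, the following suffice, with no bootstrapping in $\eta$: one sparse Hanson--Wright step per row, interlacing for the trace, linear stability of $s^2+zs+1=0$ (roots $m,1/m$ separated by $\sqrt{z^2-4}$), and a continuity/lattice root selection. The paper's route is shorter given the citation. Yours is self-contained and covers every real $z$ with $|z|\ge 2+\epsilon$ without the $|E|\le 10$ caveat. It also makes explicit the spectral gap that the paper's transfer step uses silently: its bounds $|\rho_i-E|\ge\kappa$ and the factor $1/(E-\lambda_1)$ need the noise spectrum to stay away from $E$, i.e.\ the event of Lemma \ref{semicirc_law_sparse_lemma}. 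Your concentration step would even tolerate $nq\gtrsim \log n/\delta^2$, with constants depending on $\epsilon$ and $D$.

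Three small repairs are needed.

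\begin{enumerate}
\item Lemma \ref{semicirc_law_sparse_lemma} is only proved for $q\gg \log n/n$. For a fixed $\delta$, where the hypothesis allows $q\asymp\log n/n$, it does not supply your event $\mathcal E$. As in the paper, your argument really covers $\delta=\delta(n)\to 0$, which is the regime in which the lemma is used ($\delta=(\tau/C_1)^{-1/4}$).
\item Apply Hanson--Wright conditionally on the minor, on the minor-measurable event $\{\|H^{(i)}\|\le 2+\epsilon/2\}\supseteq\mathcal E$, not on $\mathcal E$ itself.
\item Your justification that $|z+s_n|$ is bounded below is off. The sign of $s_n$ gives no lower bound, and $|1/R_{ii}|\le \|H\|+|z|$ points the wrong way. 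What you need is $|R_{ii}|\le \|R\|\le 2/\epsilon$, i.e.\ $|1/R_{ii}|=|-z-s_n+\Delta_i|\ge \epsilon/2$ with $\Delta_i:=H_{ii}-Z_i+s_n-\frac1n\operatorname{tr}R^{(i)}$. This is exactly what your final comparison $|R_{ii}-m|\le\bar C_1\delta$ uses. For the self-consistent equation itself no lower bound is needed: averaging $R_{ii}(-z-s_n)=1-R_{ii}\Delta_i$ over $i$ gives $s_n(z+s_n)+1=O(\delta/\epsilon)$ directly.
\end{enumerate}
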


\begin{proof}
    Seeing as $W$ has sub-Gaussian entries, we use \cite[Theorem 12]{sparsehanson} to choose a $D>0, \epsilon_0>0,$ and $\delta:=\delta(n)\to0.$ Then, there exist $n_0=n(D,\delta,K_2,\epsilon_0)$ defined in their work and constant $C_1=K_2^4*8^2*48^2*12^6\cdot\exp(2(D+7)+10+4/e)\cdot\max\{1,(\frac{5}{3}\delta)^4\}=K_2^4*8^2*48^2*12^6\exp(2(D+7)+10+4/e)$ for large enough $n$ (since $\delta\to 0$) such that $q\geq C_1\frac{\log n}{\delta^4n}$ implies 
    $$ \mathbb P(\max_{1\leq i \leq n}|R_{n}(z)_{ii}-m(z)|\leq \delta) \geq 1-n^{-D}~, \text{ for } z = E+i\eta \in S_0= \{E + i\eta : |E|\leq 10,n^{-1+\epsilon_0 } \leq \eta \leq 1 \}~,$$ where\footnote{In the context of their paper, we note $\beta = 1/2$ as $W$ has sub-Gaussian (2-subexponential) entries, giving us the lower bound for $\eta$ in $S_0$. } $\epsilon_0\in(0,1)$ for $n\ge n_0$.
    
    By our assumption, $z$ is a real number outside of $(-2-\epsilon,2+\epsilon)$. While $S_0$ is restricted to values with real parts less than $10$, this choice is noted to be arbitrary in \cite{sparsehanson} and can be changed without significantly altering the asymptotic result of the theorem (a change of constant in the term $C_1$ influencing sparsity $q$). However, we must take into consideration that the set $S_0$ does not cover values on the real line with an imaginary part equal to $0$. Using an estimation technique similar to \cite[Theorem 10.3]{knowleslectures}, we fix $z=E\not\in(-2-\epsilon,2+\epsilon)$, define $\kappa := \left||E|-2\right|>\epsilon$, i.e. the distance to the spectral edge, and consider $z_0 =E+i\eta_0$ where $\eta_0=\kappa^{2}\cdot \tilde{C}n^{-1+\epsilon_0}\in[n^{-1+\epsilon_0},1]$ for some suitably chosen constant $\tilde{C}$ to account for $\kappa^2$. Noting $z_0$ now satisfies the conditions of \cite[Theorem 12]{sparsehanson}, we claim it is enough to show, for an arbitrary unit-norm vector $w$,  
    \begin{align} \label{m1}
        |m(z)-m(z_0)|\leq \tilde{C}n^{-1+\epsilon_0} \text{ and }
    \end{align}
    \begin{align}\label{m2}
        |w^TR(z)w-w^TR(z_0)w|\leq \tilde C n^{-1+\epsilon_0}+\tilde C^2\kappa n^{-2+2\epsilon_0}~,
    \end{align}
    seeing as this would imply, with probability $1-n^{-D}$, that
    \begin{align*}
        \max_{1\leq i \leq n}|R_{ii}(z)-m(z)| =~& \max_{1\leq i \leq n}|R_{ii}(z)-R_{ii}(z_0)+R_{ii}(z_0)-m(z)+m(z_0)-m(z_0)| \\
        \leq~& \max_{1\leq i \leq n}|R_{ii}(z_0)-m(z_0)|+\max_{1\leq i \leq n}|e_{i}^TR(z)e_i-e_i^TR(z_0)e_i|+|m(z)-m(z_0)| \\
        \leq~& \delta +\tilde C n^{-1+\epsilon_0}+\tilde C^2\kappa n^{-2+2\epsilon_0}+\tilde{C}n^{-1+\epsilon_0} \\
        \leq~& \bar C_1\delta~,
    \end{align*}
    for some suitably chosen $\bar C_1$ large enough\footnote{Notice we assume $\delta$ is the dominating term as its presence in the denominator of the assumption on $q$ implies $\delta\ge (\frac{\log n}{n})^{-1/4}\ge n^{-1+\epsilon_0}$}.
    Beginning with (\ref{m1}), we use the Mean Value Theorem to see
    \begin{align*}
        |m(z)-m(z_0)|\leq|\max_sm'(s)|\cdot|z-z_0|=\max_s|\int\frac{1}{(s-x)^2}d\mu_{\rm sc}(x)|\cdot\eta_0
        \leq \frac{1}{\kappa^2}\cdot \kappa^{2}\cdot \tilde{C}n^{-1+\epsilon_0}=\tilde{C}n^{-1+\epsilon_0}~,
    \end{align*}
    where we used $|s-x|\geq\kappa$. Next, we recall the resolvent is a symmetric matrix as it is the inverse of a symmetric matrix. Thus, we can consider its spectral decomposition $$R_n(z_0)=\sum_{i=1}^n\frac{\mu_i\mu_i^T}{\rho_i-z_0}~,$$ where we used $(\rho_i,\mu_i)$ to represent the normalized eigenpairs of the sparse noise matrix $\frac{1}{\sqrt{nq}}W\odot A$. Thus, we move to (\ref{m2}) and observe 
    \begin{align*}
        |\Im w^TR(z_0)w|
        = \left|\Im \sum_{i=1}^n\frac{1}{\rho_i-z_0}(w^T\mu_i)^2\right| 
        =~& \left|\Im \sum_{i=1}^n\frac{\rho_i-E+i\eta_0}{(\rho_i-E-i\eta_0)(\rho_i-E+i\eta_0)}(w^T\mu_i)^2\right| \\
        =~& \left| \sum_{i=1}^n\frac{\eta_0}{(E-\rho_i)^2+\eta_o^2}(w^T\mu_i)^2\right| \\
        \leq~& \sum_{i=1}^n\frac{\eta_0}{\kappa^2}|w^T\mu_i|^2 \\
        \le~& \frac{\eta_0}{\kappa^2} \\
        =~&\tilde{C}n^{-1+\epsilon_0} ~,
    \end{align*}
    where we used Cauchy Schwarz between unit vector $w$ and normalized eigenvector $\mu_i$ $\forall i\in[n]$. 
    Next, we see
    \begin{align*}
        |\Re w^TR(z)w-\Re w^TR(z_0)w|=~& \left| \sum_{i=1}^{n}\frac{\rho_i-E}{(\rho_i-E)^2}|w^T\mu_i|^2-\Re\sum_{i=1}^{n}\frac{\rho_i-E+i\eta_0}{(\rho_i-E-i\eta_0)(\rho_i-E+i\eta_0)}|w^T\mu_i|^2 \right| \\
        =~&\left| \sum_{i=1}^{n}\frac{1}{\rho_i-E}|w^T\mu_i|^2-\sum_{i=1}^{n}\frac{\rho_i-E}{(\rho_i-E)^2+\eta_0^2}|w^T\mu_i|^2 \right| \\
        =~& \left| \sum_{i=1}^{n}\frac{\eta_0^2}{(\rho_i-E)((\rho_i-E)^2+\eta_0^2)}|w^T\mu_i|^2 \right| \\
        \leq~& \frac{\eta_0}{E-\lambda_1} \left|\sum_{i=1}^{n}\frac{\eta_0}{(\rho_i-E)^2+\eta_0^2}|w^T\mu_i|^2 \right| \\
        =~& \frac{\eta_0}{E-\lambda_1}|\Im w^TR(z_0)w| \\
        \leq~& \frac{\kappa^{2}\cdot \tilde{C}n^{-1+\epsilon_0}}{\kappa }\cdot \tilde{C}n^{-1+\epsilon_0} \\
        =~& \tilde C^2\kappa n^{-2+2\epsilon_0}~.
    \end{align*}
    Thus, we combine these two inequalities to see
    \begin{align*}
        |w^TR(z)w-w^TR(z_0)w|\leq~& |\Re w^TR(z)w-\Re w^TR(z_0)w|+|\Im w^TR(z)w-\Im w^TR(z_0)w| \\
        =~& |\Re w^TR(z)w-\Re w^TR(z_0)w|+|\Im w^TR(z_0)w| \\
        \leq~& \tilde C n^{-1+\epsilon_0}+\tilde C^2\kappa n^{-2+2\epsilon_0}~,
    \end{align*}
    where we used $|\Im w^TR(z)w|=0$ as $z\in\mathbb R$.
\end{proof}

\subsection{Convergence of Top Eigenvalues}

While the following corollary to Theorem 12 in \cite{sparsehanson} does not appear in their work, the sub-exponential version can be seen as their Theorem 6 and Theorem 7. As pointed out after Theorem 7 in their work, one can prove the corollary by following the argument outlined in \cite[Section 8]{knowleslectures} or applying \cite[Lemma 64]{tao2010randommatricesuniversalitylocal} to \cite[Theorem 12]{sparsehanson}.
\begin{corollary}[of Theorem 12 in \cite{sparsehanson}]\label{eig_interval_cor}
    For any $D>0$, $\delta\in(0,1)$, and interval satisfying $|I|>C_1\frac{\log n}{\delta^4n}$, we have $$\mathbb P\left(\left|\mathcal N_I-n\int_If_{\rm sc}(x)dx\right|\ge\delta n|I|\right)<n^{-D}~,$$ where $\mathcal N_I:=|I \bigcap \rm{spec}(\frac{1}{\sqrt{nq}}W\odot A)|$ and $f_{\rm sc}:=\frac{1}{2\pi}\sqrt{4-x^2}$ is the density of the semicircular law.
\end{corollary}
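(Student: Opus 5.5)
The plan is to derive Corollary \ref{eig_interval_cor} from the entrywise local law of \cite[Theorem 12]{sparsehanson}, quoted in the proof of Lemma \ref{lemma_2}. That result gives, with probability at least $1-n^{-D}$, $\max_i|R_n(z)_{ii}-m(z)|\le\delta$ uniformly over $z=E+i\eta\in S_0$, where $\eta\ge n^{-1+\epsilon_0}$. Averaging over $i$, the empirical Stieltjes transform $s_n(z)=\frac1n\operatorname{tr}R_n(z)$ then satisfies $|s_n(z)-m(z)|\le\delta$ on $S_0$. To make this simultaneous over all of $S_0$, I would first take a union bound over an $n^{-3}$-net of $S_0$ (polynomially many points, so the exponent $D$ only shifts by a constant). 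I would then pass to all of $S_0$ using that both $s_n$ and $m$ are $\eta^{-2}\le n^{2}$-Lipschitz there.

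The next step converts Stieltjes-transform control into counting control. I would follow \cite[Section 8]{knowleslectures}, or equivalently the Helffer--Sjöstrand argument behind \cite[Lemma 64]{tao2010randommatricesuniversalitylocal}. Fix an interval $I=[a,b]$ with $|I|\gtrsim \frac{\log n}{\delta^4 n}$ (in particular $|I|\gg n^{-1+\epsilon_0}$ once $\epsilon_0$ is chosen small). Take a smooth approximation $f$ of $\mathbf 1_I$ that equals $1$ on $I$ minus a margin of width $\eta_*\approx\delta|I|$ at each end and is supported in $I$ plus a margin of $\eta_*$. Then
\[
\frac1n\sum_k f(\rho_k)-\int f\,d\mu_{\rm sc}=\frac1\pi\int_{\mathbb C}\bar\partial\tilde f(z)\,\bigl(s_n(z)-m(z)\bigr)\,d^2z ,
\]
where $\tilde f(x+iy)=(f(x)+iyf'(x))\chi(y)$ is an almost-analytic extension. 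I would split the integral at height $y=\eta_*$. For $y\ge\eta_*$ I would use the local law bound $|s_n-m|\le\delta$, together with the usual integration by parts in $y$ for the $y f''$ term, which gives a total contribution of order $\delta|I|$ up to logarithms. For $y<\eta_*$ I would use the standard monotonicity bound $y\,\Im s_n(x+iy)\le \eta_*\Im s_n(x+i\eta_*)$ to control that region by the same order.

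The remaining steps are routine. The margins contribute at most $n\int_{\text{margins}}f_{\rm sc}\lesssim n\delta|I|$ to the semicircle side, since $f_{\rm sc}\le 1/\pi$. Sandwiching $\mathbf 1_I$ between two such $f$ (one inner, one outer) then gives $|\mathcal N_I-n\int_I f_{\rm sc}|\le C\delta n|I|$, and rescaling $\delta$ by a constant gives the stated bound. This is compatible with the hypothesis $|I|>C_1\frac{\log n}{\delta^4n}$, because that hypothesis is exactly the scale at which the local law in \cite{sparsehanson} holds with error $\delta$. For intervals extending outside $[-10,10]$, Lemma \ref{semicirc_law_sparse_lemma} (together with the same bound for the smallest eigenvalue) shows that with overwhelming probability no eigenvalues lie there, so one may truncate $I$ to $[-3,3]$.

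I expect the main obstacle to be the Helffer--Sjöstrand step in the region near the real axis, $y<\eta_*$. There the local law is unavailable, so one needs an a priori bound on $\Im s_n$ at small scales that holds with probability $1-n^{-D}$. I would try to obtain it from the monotonicity of $y\mapsto y\Im s_n(x+iy)$ applied at the scale $\eta_*\ge n^{-1+\epsilon_0}$, where the local law does apply. The cost is keeping track of the $\log n$ factors so that the final error is still $O(\delta n|I|)$ rather than $O(\delta n|I|\log n)$; if needed, I would absorb these by shrinking $\delta$ by a polylogarithmic factor, which only changes $C_1$.
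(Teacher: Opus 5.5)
Your route is the one the paper has in mind. The paper gives no argument beyond pointing to \cite[Section 8]{knowleslectures} or \cite[Lemma 64]{tao2010randommatricesuniversalitylocal} applied to \cite[Theorem 12]{sparsehanson}, and the Helffer--Sj\"ostrand mechanics you sketch are standard. The step that fails is the scale bookkeeping.

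The local law you import holds only for $\eta\ge n^{-1+\epsilon_0}$, with $\epsilon_0\in(0,1)$ \emph{fixed} (the threshold $n_0$ depends on $\epsilon_0$). For every such $\epsilon_0$ and every fixed $\delta$ one has $C_1\frac{\log n}{\delta^4 n}\ll n^{-1+\epsilon_0}$, because $n^{\epsilon_0}$ outgrows $\log n$. So the parenthetical ``in particular $|I|\gg n^{-1+\epsilon_0}$ once $\epsilon_0$ is chosen small'' is false.

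Your later remark that the hypothesis on $|I|$ ``is exactly the scale at which the local law holds'' conflates two different quantities. In \cite[Theorem 12]{sparsehanson}, $C_1\frac{\log n}{\delta^4 n}$ is a lower bound on the sparsity $q$, not on the spectral parameter. You use this hypothesis on $q$ silently, and the statement does not contain it.

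Your argument needs $|s_n-m|\le\delta$ at height $\eta_*\asymp\delta|I|$. For intervals with $C_1\frac{\log n}{\delta^4 n}<|I|\lesssim\delta^{-1}n^{-1+\epsilon_0}$ it therefore has no input. The monotonicity of $y\mapsto y\Im s_n(x+iy)$ cannot fill this in, since it only transports bounds downward from a height where the local law already holds. Taking instead $\eta_*=n^{-1+\epsilon_0}>\delta|I|$ makes the error from the small-$y$ region and from the margins of order $n\eta_*$, which exceeds $\delta n|I|$.

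What your argument proves is the counting bound under $q\ge C_1\frac{\log n}{\delta^4 n}$ and $|I|\gtrsim\delta^{-1}n^{-1+\epsilon_0}$. The same is true of the paper's cited route: \cite[Lemma 64]{tao2010randommatricesuniversalitylocal} has a hypothesis of the form $|I|\gtrsim\frac{\eta}{\delta}\log\frac1\delta$ with $\eta=n^{-1+\epsilon_0}$. This weaker version suffices for the only place the paper uses the corollary, Lemma \ref{fixed_eig_noise_lemma}, where $I=(2-\bar\epsilon,2+\bar\epsilon)$ has fixed length and $q=\tau\frac{\log n}{n}$ with $\tau\to\infty$. Reaching intervals of length comparable to $\frac{\log n}{n}$, as the statement literally asserts, would require a local law down to $\eta\asymp\frac{\log n}{n}$. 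That is a different input from the one you cite.

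There are two smaller repairs.
\begin{enumerate}
\item The cutoff $\chi$ must live at height $\asymp|I|$ rather than $O(1)$. With only a uniform error $\delta$, the $yf'(x)\chi'(y)$ term and the bulk term produced by your integration by parts are of size $\delta$ times the height of $\chi$. With the cutoff at height $\asymp|I|$, there is no $\log n$ loss at all.
\item Your fallback of shrinking $\delta$ by a polylogarithmic factor does not merely change $C_1$. Since $\delta$ enters the sparsity threshold as $\delta^{-4}$, it would raise the power of $\log n$ required of $q$.
\end{enumerate}
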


Thus, we can prove the following lemma, which is necessary in our main proof to control the eigenvalues that remain inside the bulk. 

\begin{lemma}\label{fixed_eig_noise_lemma}
    For any fixed $i\ge1$, we have $\lambda_i(\frac{1}{\sqrt{nq}}W\odot A)\xrightarrow[]{\P}2$. 
\end{lemma}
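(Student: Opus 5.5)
We prove convergence in probability by bounding $\lambda_i$ from above and from below, where $\lambda_i$ denotes $\lambda_i(\frac{1}{\sqrt{nq}}W\odot A)$. Fix $i\ge 1$ and $\epsilon>0$. We must show $\mathbb P(|\lambda_i-2|>\epsilon)\to 0$.

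\textbf{Upper bound.} This follows directly from Lemma~\ref{semicirc_law_sparse_lemma}. Since $\lambda_i\le\lambda_1$,
$$\mathbb P(\lambda_i>2+\epsilon)\le \mathbb P(\lambda_1>2+\epsilon)\le \exp(-C_8nq)\to 0,$$
because $nq\to\infty$ under Assumption~\ref{p_assumption}. When $q$ is constant, the almost-sure Bai--Yin argument at the end of that proof plays the same role.

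\textbf{Lower bound.} We show that the interval $I=[2-\epsilon,2]$ contains at least $i$ eigenvalues with high probability. Then $\lambda_i\ge 2-\epsilon$, since the $i$-th largest eigenvalue is at least the $i$-th largest eigenvalue lying in $I$.

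Apply Corollary~\ref{eig_interval_cor} with this $I$ and with a small fixed $\delta\in(0,1)$ chosen below. The length condition $|I|=\epsilon>C_1\frac{\log n}{\delta^4 n}$ holds for large $n$. The sparsity hypothesis $q\ge C_1\frac{\log n}{\delta^4 n}$ inherited from Theorem 12 of \cite{sparsehanson} holds eventually because $q=\tau\frac{\log n}{n}$ with $\tau\to\infty$. The corollary then gives, with probability at least $1-n^{-D}$,
$$\mathcal N_I\ge n\int_{2-\epsilon}^2 f_{\rm sc}(x)\,dx-\delta n\epsilon.$$
Near the edge, $f_{\rm sc}(x)\approx\frac{1}{\pi}\sqrt{2-x}$, so $\int_{2-\epsilon}^2 f_{\rm sc}(x)\,dx=:c_\epsilon>0$, of order $\epsilon^{3/2}$. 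Choosing $\delta<c_\epsilon/(2\epsilon)$ yields $\mathcal N_I\ge \frac{c_\epsilon}{2}n$, which exceeds the fixed $i$ for all large $n$. Hence
$$\mathbb P(\lambda_i<2-\epsilon)\le n^{-D}\to 0.$$

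\textbf{Conclusion and main obstacle.} Combining the two bounds gives $\mathbb P(|\lambda_i-2|>\epsilon)\le \exp(-C_8nq)+n^{-D}\to0$, which is the claim. The upper bound is immediate. The delicate step is checking that Corollary~\ref{eig_interval_cor} applies with a \emph{fixed} $\delta$ tied to $\epsilon$, rather than with $\delta=\delta(n)\to0$ as in the proof of Lemma~\ref{lemma_2}. This matters because $\delta$ must be small compared with the edge mass $c_\epsilon/\epsilon$, which is small when $\epsilon$ is. Fixing $\delta$ is harmless: the constant $C_1$ then depends only on $D,K_2$, and $\tau\to\infty$ absorbs the $\delta^{-4}$ factor. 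A cruder alternative, if the constant dependence is troublesome, is to use a macroscopic interval such as $[2-\epsilon,2-\epsilon/2]$, which carries the same positive semicircle mass.
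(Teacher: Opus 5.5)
Your proof is correct and uses the same key ingredient as the paper: the eigenvalue-counting Corollary~\ref{eig_interval_cor} applied to an edge interval, with a fixed $\delta$ small relative to the semicircle mass there; the paper argues by contradiction on the symmetric interval $(2-\bar\epsilon,2+\bar\epsilon)$, whereas you argue directly on $[2-\epsilon,2]$. Your separate upper bound via Lemma~\ref{semicirc_law_sparse_lemma} is genuinely needed and is glossed over in the paper, whose step ``$\lambda_i\notin(2-\bar\epsilon,2+\bar\epsilon)$ implies $\mathcal N_I\le i-1$'' is only valid when $\lambda_i\le 2-\bar\epsilon$.
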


\begin{proof}
    Assume, for sake of contradiction, that there exists some fixed $i\ge 1$ such that $\lambda_i(\frac{1}{\sqrt{nq}}W\odot A)\not\xrightarrow[]{\P}2$. Then, there exist $\bar \epsilon, \bar  \delta>0$ such that $\mathbb P (\lambda_i(\frac{1}{\sqrt{nq}}W\odot A)\not\in(2-\bar \epsilon, 2+\bar \epsilon):=I)>\bar\delta$. Thus, $\mathcal N_I\le i-1$. 

    Observe the negation of Corollary \ref{eig_interval_cor}: there exist a $D>0, \delta\in(0,1)$, or interval $I$ satisfying $|I|\ge C_1\frac{\log n}{\delta^4 n}$ such that $$\mathbb P\left(\left|\mathcal N_I-n\int_If_{\rm sc}(x)dx\right|\ge\delta n|I|\right)\ge n^{-D}~.$$
    We now show the established interval $I:=(2-\bar \epsilon, 2+\bar \epsilon)$ satisfies the conditions of this statement, thus reaching a contradiction. Let $C_4:=\int_If_{\rm sc}(x)dx$.

    Let $D>0$ be large enough such that $\bar \delta \ge n^{-D}$. Choose $\delta \le \frac{C_4n-(i-1)}{2n\bar \epsilon}\le C_4/(2\bar \epsilon)$ where the latter inequality holds for large enough $n$. Observe $|I|=2\bar \epsilon \ge C_1{ \log n}/({\delta^4n})$ for large enough n. 

    For simplicity's sake, consider values of $n$ large enough such that $C_4n\ge i-1$. Therefore, we have 
    \begin{align*}
        \mathbb P\left(\left|\mathcal N_I-n\int_If_{\rm sc}(x)dx\right|\ge\delta n|I|\right) =~& \mathbb P\left(C_4n-\mathcal N_I\ge2\delta n\bar \epsilon\right) \\
        \ge~& \mathbb P\left(C_4n-\mathcal N_I\ge 2\frac{C_4n-(i-1)}{2n\bar \epsilon}n\bar \epsilon\right) \\
        =~& \mathbb P (C_4n-\mathcal N_I\ge C_4n-(i-1)) \\
        =~& \mathbb P(\mathcal N_I\le i-1) \\
        \ge~& \mathbb P \left(\lambda_i(\frac{1}{\sqrt{nq}}W\odot A)\not \in (2-\bar \epsilon, 2+\bar \epsilon)\right) \\
        >~&\bar\delta \\
        \ge~& n^{-D}~,
    \end{align*}
    thus reaching our desired contradiction. 
\end{proof}

%% file: sections/distinguish.tex
\section{Proofs of Main Results}
Next, we present the proofs of our main results from Section~\ref{sec:main}.
When conditioning on a high probability event, we will use the notation $\mathcal A_j$ where $j$ records the order of creation.

\subsection{Distinguishability Proof}


\begin{proof-of-theorem}[\ref{eigenvalue_theorem}]
    We follow a similar proof strategy to \cite[4.1]{benaychgeorges2010eigenvalues}. Recall the eigenvalues of $X$ are the solutions, $z$, to the equation $\det(X-zI)=0$. We first investigate the eigenvalues $z$ that result from the perturbation. In other words, our analysis considers the case in which the matrix $\frac{1}{\sqrt{nq}}W \odot A-zI$ is invertible. To do so, we consider eigenvalues $z$ which fall outside of the semicircular spectral bulk of our sparse Wigner matrix. To begin with, we define the bulk by picking an $\epsilon<\epsilon_*$ such that $(2+\epsilon_*)<\theta_{r^*}+\frac{1}{\theta_{r^*}}$ where $\theta_{r^*}=\min_{i\in[r]}\{\theta_i:\theta_i>1 \}$. In other words, $\theta_1>\theta_2>\cdots>\theta_{r^*}>1\ge\theta_{r^*+1}>\cdots> \theta_r$.

    
    Let $\mathcal A_1$ be the event $\mathcal A_1:=\{\lambda_1(\frac{1}{\sqrt{nq}}W \odot A) < 2+\epsilon \}$. Going forward, we condition on this event. As $q\gg \frac{\log n}{n}$, Lemma \ref{semicirc_law_sparse_lemma} shows this is a high probability event, and Lemma \ref{high_prob_event} allows us to show all of the following holds with high probability. 

    Suppose that we have an eigenvalue $\lambda_i(X):=z\not\in(-2-\epsilon,2+\epsilon)$. By our choice of $q$, we see $z$ cannot be an eigenvalue of $\frac{1}{\sqrt{nq}}W \odot A$ with high probability and thus $\frac{1}{\sqrt{nq}}W \odot A-zI$ is invertible: 
\begin{align*}
    0=~&\text{det}(X-zI)\\=~&\text{det}(\frac{1}{np}V\Theta V^T+\frac{1}{\sqrt{nq}}W\odot A-zI)\\=~&\text{det}(\frac{1}{\sqrt{nq}}W\odot A-zI)\cdot \text{det}(I+\frac{1}{np}R_n(z)V\Theta V^T)\\=~&\text{det}(\frac{1}{\sqrt{nq}}W\odot A-zI)\cdot \text{det}(I+\frac{1}{np}V^TR_n(z)V\Theta )~,
\end{align*}
where we used Sylvester's Identity ($\text{det}(I+AB)=\text{det}(I+BA)$) in the fourth equality and defined the resolvent to be $R_n(z)=(\frac{1}{\sqrt{nq}}W\odot A-zI)^{-1}$. This result allows us to conclude that 
\begin{align}
    \lambda_i(X)= z\not \in (-2-\epsilon, 2+\epsilon) \iff \text{det}(I+\frac{1}{np}V^TR(z)V\Theta ) = 0~.\label{iff_eig}
\end{align}
We aim to show $\frac{1}{np}V^TR_n(z)V\to m(z)I_r$ where $m(z)$ is the Stieltjes transform of the semicircular distribution, which relies on proving diagonal and off-diagonal concentration. For notational convenience, we henceforth use $R=R_n(z)$, but note the resolvent is a function of the chosen eigenvalue $z$, which scales in our dimension $n$. We begin by considering the diagonal entries. 

\paragraph{Diagonal Concentration.}Adding and subtracting the expectation, we have 
\begin{align}\label{diagonal_resolvent}
    [\frac{1}{np}V^T RV ]_{ii}=\frac{1}{np}v_i^TRv_i=\frac{1}{np}v_i^TRv_i-\mathbb E[\frac{1}{np}v_i^TRv_i]+\mathbb E[\frac{1}{np}v_i^TRv_i]~.
\end{align} 
In order to prove concentration in the first term, we aim to use a variation on the Hanson-Wright Inequality. In order to do so, we must consider the operator and Frobenius norm of our resolvent matrix $R$. By our choice of $z$, we use a bound on the operator norm of the resolvent \cite[Theorem 5.8]{hislop2012introduction} to see 
\begin{align}\label{Resolvent_Op_bound}
    \|R\|_{2} = \left\|(\frac{1}{\sqrt{nq}}W_n\odot A_n - zI)^{-1}\right\|_{2}
    \leq \frac{1}{\text{dist}(\text{spec}(\frac{1}{\sqrt{nq}}W_n\odot A_n), z)}
    \leq \frac{1}{z-2-\epsilon} := \sqrt{C_2}~,
\end{align}
where we notice $C_2$ is some large constant depending on how close our choice of $z$ is to $(2+\epsilon)$. Next, we consider the Frobenius norm. In order to do so, we condition on the structure of  $v_i$. Formally, we define $\mathcal A_2 := \{|\text{supp}(v_i)|=np+k_i:k_i\in[-\log(np)(np)^{1/2},\log(np)(np)^{1/2}]\}$. Lemma \ref{support_size_independent} shows $\mathcal A_2$ is a high probability event, and Lemma \ref{high_prob_event} shows all of the following holds with high probability if we condition on $\mathcal A_2$. We use the notation $R_{\text{supp}(v_i)}$ to denote the $|\text{supp}(v_i)|\times|\text{supp}(v_i)|=(np+k_i)\times (np+k_i)$ submatrix of $R$ corresponding to the support of $v_i$. Note that 
\begin{align*}
    \|R_{\text{supp}(v_i)}\|_F^2 \leq \text{rank}(R_{\text{supp}(v_i)})\|R_{\text{supp}(v_i)}\|_{2}^2 \leq \text{rank}(R_{\text{supp}(v_i)})\|R\|_{2}^2 =~& (np+k_i)\cdot\|R\|_{2}^2
    \le (np+o(np))\cdot C_2 
\end{align*}
where we used the fact that the 2-norm of a submatrix is less than or equal to the operator norm of the full matrix. Thus, for some appropriately chosen $C_3>1$, we have $\|R_{\text{supp}(v_i)}\|_F^2 \leq C_3\cdot C_2\cdot np$ for all $i\in [r]$. Armed with bounds on the operator and Frobenius norms for our submatrix $R_{\text{supp}(v_i)}$, we turn toward the spike vectors. Note $\bar v_i := v_i|_{\text{supp}(v_i)}$ simply reduces the dimension of $v_i$ by removing the $0$ elements. Therefore, it is still a random vector with sub-Gaussian random variable entries. Thus, $\| \bar v_i\|_{\psi_2}\le K$ for some $K>0$. Thus, satisfying the conditions of \cite[Theorem 1.1]{rudelson2013}, we can apply their variation on the Hanson-Wright inequality for our choice of $t={np}/({\log(np))^{\gamma}}$ for some $\gamma>0$. Notice how 
\begin{align*}
    \min \left\{ \frac{t^2}{K^4 \|R_{\text{supp}(v_i)}\|_F^2}, \frac{t}{K^2 \|R_{\text{supp}(v_i)}\|_{2}} \right\} =~& \min \left\{ \frac{(np/\log(np)^{\gamma})^2}{ K^4 \|R_{\text{supp}(v_i)}\|_F^2}, \frac{np/\log(np)^{\gamma}}{ K^2 \|R_{\text{supp}(v_i)}\|_{2}} \right\} \\ 
    \ge~& \min \left\{ \frac{(np/\log(np)^{\gamma})^2}{ K^4 \cdot C_3 C_2 np}, \frac{np/\log(np)^{\gamma}}{ K^2\cdot \sqrt{C_2} } \right\} \\
    =~& \frac{1}{C_3C_2K^4}\frac{np}{(\log(np))^{2\gamma}}~,
\end{align*}
for $n$ large enough. Thus, 
\begin{align*}
    \mathbb P\left( \left| \frac{1}{np} v_i^T R v_i - \frac{1}{np}\mathbb{E}_{v_i} [ v_i^T R v_i]  \right| > \frac{t}{np}\right) =~& \mathbb P( \left| v_i^T R v_i - \mathbb{E}_{v_i} [v_i^T R v_i]  \right| > t) \\
    =~& \mathbb P( \left| v_i^T R_{\text{supp}(v_i)} v_i - \mathbb{E}_{v_i}[v_i^T R_{\text{supp}(v_i)} v_i]  \right| > t) \\
    \leq~& 2\exp\left( -c_1 \min \left\{ \frac{t^2}{K^4 \|R_{\text{supp}(v_i)}\|_F^2}, \frac{t}{K^2 \|R_{\text{supp}(v_i)}\|_{2}} \right\} \right) \\ 
    <~& 2\exp\left( -\frac{c_1}{C_3C_2K^4}\frac{np}{(\log(np))^{2\gamma}} \right)  
    \to~ 0~,
\end{align*}
since, by our assumption, $np\to \infty$. Here, $c_1$ is a positive constant specified in \cite{rudelson2013}. 

We want to show this result holds for all $i\in [r]$. Thus, we can perform the following union bound. We apply this technique multiple times throughout our work. For brevity, we include the computation here and reference the technique going forward. We see
\begin{align}
    \mathbb P \left(\exists i\in[r] :\left|  \frac{1}{np} v_i^T R v_i - \frac{1}{np}\mathbb{E}_{v_i} [ v_i^T R v_i]  \right| > \frac{1}{(\log(np))^{\gamma}} \right) =~&\mathbb P \left(\bigcup_{i=1}^r\left\{\left| \frac{1}{np} v_i^T R v_i - \frac{1}{np}\mathbb{E}_{v_i} [ v_i^T R v_i]  \right| > \frac{1}{(\log(np)^\gamma} \right\}\right) \notag \\
    \le r~&\mathbb P \left(\left| \frac{1}{np} v_i^T R v_i - \frac{1}{np}\mathbb{E}_{v_i} [ v_i^T R v_i]  \right| > \frac{1}{(\log(np))^\gamma}\right) \notag \\
    \le~& 2r\exp\left( -\frac{c_1}{C_3C_2K^4}\frac{np}{(\log(np))^{2\gamma}} \right) 
    \to~ 0   ~.\label{allspikesdiagonal}
\end{align}
We define $$\mathcal A_3 := \left\{\left| \frac{1}{np} v_i^T R v_i - \frac{1}{np}\mathbb{E}_{v_i} [ v_i^T R v_i]  \right| \le (\log np)^{-\gamma} \quad \forall i\in[r]\right\}~,$$
which we will use as a high probability event that we condition on in the proof of Theorem \ref{eigenvector_theorem}.




Moving to the second term of \eqref{diagonal_resolvent}, we are now ready to use Lemma \ref{lemma_2}. Because $q= \tau\frac{\log n}{n}$, we can choose $D>0$ and $\delta=(\tau/C_1)^{-1/4}$ so that $q=\tau(n)\frac{\log n}{n} \geq C_1\frac{\log n}{\delta^4n}$, thus satisfying the conditions of Lemma \ref{lemma_2} and giving us, for sufficiently large $n$, 
\begin{align}
    \mathbb P\left(\max_{1\leq i \leq n}|R_{n}(z)_{ii}-m(z)|\leq \tilde C_1\tau^{-1/4}\right) \geq 1-n^{-D}~,\label{resolvent_control_with_delta_plugged_in}
\end{align}
where we let $\tilde C_1:=\bar C_1C_1^{1/4}$. 

Continuing our conditioning on $\mathcal A_2$, we recall how $\mathbb E [(\bar v_i)_j^2]=1$ and $\mathbb E[(\bar v_i)_j(\bar v_i)_l]=\mathbb E[(v_i)_j]\cdot \mathbb E[(v_i)_l]=0$ for $j,l\in\text{supp}(v_i)$. Thus, we see
\begin{align*}
    \mathbb{E}_{v_i} \left[\frac{1}{np} v_i^T R v_i \right] =~& \mathbb{E}_{v_i} \left[ \text{tr} \left( \frac{1}{np} \bar v_i^T R_{\text{supp}(v_i)} \bar v_i \right) \right] \\
    =~& \frac{1}{np} \text{tr} \left( R_{\text{supp}(v_i)} \mathbb{E}_{v_i} \left[ \bar v_i \bar v_i^T \right] \right) \\
    =~& \frac{1}{np} \text{tr} (R_{\text{supp}(v_i)} I_{\text{supp}(v_i)} ) \\
    =~& \frac{1}{np} \text{tr} (R_{\text{supp}(v_i)}) \\
    =~& \frac{np+k_i}{np}\left( \frac{1}{n p+k_i} \text{tr} (R_{\text{supp}(v_i)})-m(z)+m(z) \right)  \\
    \xrightarrow{\P}~& m(z)~,
\end{align*}
where we used  $|\text{\text{supp}}(v_i)|=n p+k_i $, $\frac{np+k_i}{np}\to 1$ as $k_i\in [ -o(np),o(np) ]$, and  \eqref{resolvent_control_with_delta_plugged_in} to see
\begin{align*}
    1-n^{-D} \leq~& \mathbb P\left(\max_{1\leq i \leq n}|R_{n}(z)_{ii}-m(z)|\leq \tilde C_1\tau^{-1/4}\right) \\
    \leq~& \mathbb{P}\left(\left|\frac{1}{n p+k_i}\text{tr}(R_n(z)_{\text{supp}(v_i)})-m(z)\right|\leq \tilde C_1\tau^{-1/4} \right)~.
\end{align*}
Proving this result for all elements on the diagonal follows a similar process to \eqref{allspikesdiagonal} by taking a union bound: 
\begin{align}
    \mathbb{P}\left(\exists i\in[r] :\left|\frac{1}{n p+k_i}\text{tr}(R_n(z)_{\text{supp}(v_i)})-m(z)\right|> \tilde C_1\tau^{-1/4} \right)\le rn^{-D}\label{resolv_exp_control}~.
\end{align}
As we did with $\mathcal A_3$, we define the following
$$\mathcal A_4 := \left\{\left| \frac{1}{np}\mathbb{E}_{v_i} [ v_i^T R v_i]  \right| \le 2\tilde C_1\tau^{-1/4} +\frac{\log(np)\sqrt{np}}{np} m(z) \quad \forall i\in[r]\right\}~,$$
which, by \eqref{resolv_exp_control}, is a high probability event.




\paragraph{Off-Diagonal Concentration.} Next, we consider the off-diagonal entries of the matrix $\frac{1}{np}V^TRV$. We aim to apply a similar concentration method above and do so by constructing a concatenated matrix $\tilde R$. 
Thus, seeing as the resolvent is symmetric, we have 
\begin{align}
    2[V^TRV]_{ij}=2v_i^TRv_j=~& v_i^TRv_i+2v_i^TRv_j+v_j^TRv_j-v_i^TRv_i-v_j^TRv_j \notag \\
    =~& \begin{bmatrix} v_i \\ v_j\end{bmatrix}^T\begin{bmatrix} R & R \\ R & R\end{bmatrix}\begin{bmatrix} v_i \\ v_j\end{bmatrix} -v_i^TRv_i-v_j^TRv_j \notag \\
    =~& \begin{bmatrix} \bar v_i \\ \bar v_j\end{bmatrix}^T\begin{bmatrix} R_{\text{supp}(v_i)} & R_{\text{supp}(v_i)\cap \text{supp}(v_j)} \\ R_{\text{supp}(v_i)\cap \text{supp}(v_j)} & R_{\text{supp}(v_j)} \end{bmatrix}\begin{bmatrix} \bar v_i \\\bar v_j\end{bmatrix} -v_i^TRv_i-v_j^TRv_j \notag \\
    :=~& \tilde v^T \tilde R \tilde v -v_i^TRv_i-v_j^TRv_j \notag \\
    =~& \tilde v^T \tilde R \tilde v -\mathbb E [\tilde v^T \tilde R \tilde v]+\mathbb E [\tilde v^T \tilde R \tilde v] -v_i^TRv_i-v_j^TRv_j \notag\\
    =~& \tilde v^T \tilde R \tilde v -\mathbb E [\tilde v^T \tilde R \tilde v] + \mathbb E[ v_i^TRv_i]-v_i^TRv_i + \mathbb E[ v_j^TRv_j]-v_j^TRv_j ~,\label{offdiagtilderewrite}
\end{align}
where, in the final equality, we used the independence of the entries of $v_i$ and $v_j$ to see
$$\mathbb E_{v_i,v_j}[v_i^TRv_j]=\mathbb E_{v_i,v_j}[\text{tr}(v_i^TRv_j)]=\text{tr}(R\cdot \mathbb E_{v_i,v_j}[v_jv_i^T])=0~,$$
thus giving us
\begin{align*}
    \mathbb{E}_{v_i, v_j} \left[\tilde v^T \tilde R \tilde v \right] =~& \mathbb{E}_{v_i, v_j} \left[v_i^T R_n(z) v_i \right]+2\mathbb{E}_{v_i, v_j} \left[ v_i^T R_n(z) v_j \right]+\mathbb{E}_{v_i, v_j} \left[ v_j^T R_n(z) v_j \right] \\
    =~& \mathbb{E}_{v_i, v_j} \left[ v_i^T R_n(z) v_i \right]+\mathbb{E}_{v_i, v_j} \left[v_j^T R_n(z) v_j \right]~.
\end{align*}
Observe that, when we properly scale by $\frac{1}{np}$, \eqref{allspikesdiagonal} gives us concentration for the last four terms of \eqref{offdiagtilderewrite}. Thus, we turn our attention toward the first two terms of \eqref{offdiagtilderewrite}. In an effort to apply a similar \cite{rudelson2013} Hanson-Wright Inequality again, we  observe how
\begin{align*}
    \|\tilde R\|_2
    \le~& \| R_{\text{supp}(v_i)} \|_2 + 2\| R_{\text{supp}(v_i)\cap \text{supp}(v_j)} \|_2 +  \|R_{\text{supp}(v_j)} \|_2 \le 4\|R\|_2 \le 4\sqrt{{C_2}}~,
\end{align*}
by our choice of $z\not \in (-2-\epsilon, 2+\epsilon)$. 
Further, we see
\begin{align*}
    \| \tilde R\|_F^2 = \| R_{\text{supp}(v_i)} \|_F^2 + 2\| R_{\text{supp}(v_i)\cap \text{supp}(v_j)} \|_F^2 +  \|R_{\text{supp}(v_j)} \|_F^2 
    \le  4~\max_l (np+k_l)\|R\|_2^2 
    \leq 4C_3 C_2 np~.
\end{align*}
Seeing as the concatenation $\tilde v$ of sub-Gaussian random vectors is also sub-Gaussian, we satisfy the conditions of \cite[Theorem 1.1]{rudelson2013}, and again apply their variation of the Hanson-Wright Inequality with the choice of $t={np}/{(\log(np))^{\gamma}} $ for some $\gamma>0:$
\begin{align}
    \mathbb P\left ( \left| \frac{1}{np} \tilde v^T \tilde R \tilde v - \frac{1}{np}\mathbb{E}_{\tilde v} [ \tilde v^T \tilde R \tilde v]  \right| > \frac{1}{(\log(np))^\gamma}\right)
    \le ~& 2\exp\left( -c_1 \text{min} \left\{ \frac{(np/(\log(np))^{\gamma})^2}{ K^4 \|\tilde R\|_F^2}, \frac{np/(\log(np))^{\gamma}}{ K^2 \|\tilde R\|_{2}} \right\} \right) \notag \\ 
    \le~& 2\exp\left( -c_1 \text{min} \left\{ \frac{(np/(\log(np))^{\gamma})^2}{ K^4 \cdot  4C_3 C_2 np}, \frac{np/(\log(np))^{\gamma}}{  K^2 \cdot 4\sqrt{C_2}} \right\} \right) \notag \\
    =~& 2\exp\left( -\frac{c_1}{4C_3C_2K^4}\frac{np}{(\log(np))^{2\gamma}} \right)\to 0, \label{ineqjsingle}
\end{align}
seeing as $np\to \infty$. 

Therefore, \eqref{offdiagtilderewrite} scaled by ${1}/{np}$ converges to $0$: $\frac{1}{np}v_i^TRv_j\xrightarrow[]{\P}0$. We again perform a standard union bound across all pairings of $i\neq j$ and gain only a multiplicative constant ${r(r-1)}/{2}$ to show $v_i^TRv_j\xrightarrow[]{\P}0$ for all $i \neq j$. As previously done with $\mathcal A_3$ and $\mathcal A_4$, we establish 
\begin{align}
    \mathcal A_5 := \left\{\left| \frac{1}{np} \tilde{v}^T \tilde R \tilde{v} - \frac{1}{np}\mathbb{E}_{v_i,v_j} [ \tilde v^T \tilde R \tilde v]  \right| \le (\log np)^{-\gamma}, \quad \forall \{i,j\}\in[r]^2\right\}~.\label{sixthhighprob}
\end{align}
The above analysis shows this is a high probability event with
\begin{align}
    \mathbb P( \mathcal A_5^c) \le r(r-1)\exp\left(-\frac{c_1}{4C_3C_2K^4}\frac{np}{(\log(np))^{2\gamma}} \right) ~,\label{A_6_ref}
\end{align} which we will exploit in the proof of Theorem \ref{eigenvector_theorem}. Then, using Lemma \ref{high_prob_event}, we perform the standard computation involving our assumption of the high probability events $\mathcal A_1$ and $\mathcal A_2$ to remove conditioning and show the matrix convergence \eqref{matrix_convergence} below occurs with high probability. 

\paragraph{Convergence.} Putting all our results together, we conclude that, entrywise, \begin{align}\label{matrix_convergence}
    I+\frac{1}{np}V^TRV\Theta \xrightarrow[]{\P}I+m(z)\Theta~. 
\end{align} Thus, for a zero to occur on the diagonal, we must have $z=m^{-1}(-1/\theta_i)$. As we only consider $\theta_i>0$ and the range of the Stieltjes transform of the semicircular law  $m(z)=\frac{-z+\sqrt{z^2-4}}{2}$ is $(-1, 0)\cup(1,\infty)$, we see $m(z)=-1/\theta_i$ only when $\theta_i>1$. Solving the equation, we have an eigenvalue $z\not\in(-2-\epsilon,2+\epsilon)$ if and only if $z=\theta _i+\frac{1}{\theta _i}$ for $\theta_i>1$. 

Further, because the same number ($r^*$) of eigenvalues remains outside of the bulk for any choice of $\epsilon<\epsilon^*$, we must have that $\lambda_i(X)\le2$ for $i>r^*$. By Weyl's Interlacing Inequality, we have $$\lambda_{i}(\frac{1}{\sqrt{nq}}W\odot A) \le \lambda_{i}(X)\le 2.$$ By Lemma \ref{fixed_eig_noise_lemma}, we know $\lambda_{i}(\frac{1}{\sqrt{nq}}W\odot A)\xrightarrow[]{\P}2$ and thus $\lambda_{i}(X)\xrightarrow[]{\P} 2$ for fixed $i>r^*$.

\paragraph{Fluctuation.} Finally, we compute the fluctuation of the true eigenvalue $\tilde z$ from the limit eigenvalue $z_i=\theta_i+\frac{1}{\theta_i}$. Recall \eqref{iff_eig}: 
\begin{align*}
    \lambda_i(X)= \tilde z\not \in (-2-\epsilon, 2+\epsilon) \iff \text{det}(I+\frac{1}{np}V^TR(\tilde z)V\Theta ) = 0~.
\end{align*}
We begin by fixing a value of $n>1$ and viewing $I+\frac{1}{np}V^TR(z)V\Theta$ as a function of $z$. We introduce our final high probability event to condition on: $$\mathcal A_6 :=\left\{ \frac{1}{np}\|V\|_2^2\le100r\right\}~,$$ which we prove is a high probability event by Theorem \ref{spike_two_norm_control}. Thus, with high probability, $\|\frac{1}{np}V^TR(z)V\Theta \|_2\le \frac{1}{np}\|V\|_2^2 \|\Theta\|_2\|R(z)\|_2\le 100r\|R(z)\|_2\to 0$ as $z\to \infty$ seeing as $R(z)\to 0$. Therefore, we have $I+\frac{1}{np}V^TR(z)V\Theta\to I$, which ensures that all the eigenvalues of the former converge to $1$ as the eigenvalues are continuous functions of the matrix entries. Consider the largest $\tilde z$ value where $\text{det}(I+\frac{1}{np}V^TR(\tilde z)V\Theta ) = 0$. 
\begin{claim}\label{largest_smallest_claim}
    Given $\tilde z$ is the largest value such that $\text{det}(I+\frac{1}{np}V^TR(\tilde z)V\Theta ) = 0$, we have $\tilde z=\lambda_1(X)$ with high probability and $\lambda_r(I+\frac{1}{np}V^TR(\tilde z)V\Theta)=0$.
\end{claim}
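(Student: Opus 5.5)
We prove the two assertions of Claim \ref{largest_smallest_claim} separately. Throughout we work on $\mathcal A_1$ and $\mathcal A_6$, and write $M(z):=I+\frac{1}{np}V^TR(z)V\Theta$ for $z>2+\epsilon$.

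\textbf{First assertion: $\tilde z=\lambda_1(X)$.} The equivalence \eqref{iff_eig} shows that the zeros of $z\mapsto\det M(z)$ on $(2+\epsilon,\infty)$ are exactly the eigenvalues of $X$ lying in that interval. By Theorem \ref{eigenvalue_theorem}'s convergence step, $\lambda_1(X)$ lies near $\theta_1+\frac{1}{\theta_1}>2+\epsilon$ with high probability, because $\theta_1\ge\theta_{r^*}>1$. So the set of zeros is nonempty, and its maximum $\tilde z$ equals the largest eigenvalue of $X$ exceeding $2+\epsilon$. That eigenvalue is $\lambda_1(X)$.

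\textbf{Second assertion: $\lambda_r(M(\tilde z))=0$.} Here $\lambda_r$ denotes the smallest eigenvalue. Since $M(z)$ is not symmetric, we first symmetrize:
\[
M(z)\sim \Theta^{1/2}M(z)\Theta^{-1/2}=I+\tfrac{1}{np}\Theta^{1/2}V^TR(z)V\Theta^{1/2}=:S(z).
\]
The matrix $S(z)$ is real symmetric, so its eigenvalues are real, ordered, and coincide with those of $M(z)$. We then use three facts about the eigenvalues of $S(z)$ on $(2+\epsilon,\infty)$.
\begin{enumerate}
\item They are continuous in $z$, because $R(z)$ is analytic there.
\item $S(z)$ is nondecreasing in $z$ in the Loewner order, since $\frac{d}{dz}R(z)=R(z)^2\succeq 0$. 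Hence each ordered eigenvalue $\lambda_k(S(z))$ is nondecreasing in $z$.
\item As $z\to\infty$, $\|S(z)-I\|_2\le 100r\|\Theta\|_2\|R(z)\|_2\to0$ on $\mathcal A_6$, so every eigenvalue tends to $1$.
\end{enumerate}

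Because $\tilde z$ is a zero of $\det M$, some eigenvalue of $S(\tilde z)$ equals $0$. Suppose, for contradiction, that $\lambda_r(S(\tilde z))<0$. By facts 1 and 3, $\lambda_r(S(z))\to 1>0$ as $z\to\infty$. The intermediate value theorem then gives some $z'>\tilde z$ with $\lambda_r(S(z'))=0$, hence $\det M(z')=0$. This contradicts the maximality of $\tilde z$. Therefore $\lambda_r(S(\tilde z))\ge 0$, and since $0$ is an eigenvalue, the smallest eigenvalue is exactly $0$.

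Monotonicity (fact 2) is what makes this argument clean. Alternatively, it can be replaced by continuity combined with maximality alone, since $\det M(z)\ne0$ for $z>\tilde z$.

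\textbf{Main obstacle.} The main difficulty is justifying that the relevant zeros lie strictly to the right of $2+\epsilon$, so that the resolvent is well defined along the whole path $[\tilde z,\infty)$. On $\mathcal A_1$ this is immediate, because $R(z)$ is analytic for $z>2+\epsilon$. The bound \eqref{Resolvent_Op_bound} gives uniform control there. Finally, Lemma \ref{high_prob_event} removes the conditioning on $\mathcal A_1$ and $\mathcal A_6$, so both conclusions hold with high probability.
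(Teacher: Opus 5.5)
Your proposal is correct and follows essentially the paper's argument: the first assertion is read off from \eqref{iff_eig}, and the second is the same contradiction, using continuity of the eigenvalues, their convergence to $1$ as $z\to\infty$, and the intermediate value theorem to produce a zero $z'>\tilde z$. Your symmetrization $S(z)=\Theta^{1/2}M(z)\Theta^{-1/2}$ is a worthwhile refinement, since it justifies that the eigenvalues are real and can be ordered (the paper tacitly treats $I+\frac{1}{np}V^TRV\Theta$ as symmetric); the Loewner monotonicity is, as you note yourself, not needed for this claim.
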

\begin{proof}
    $\tilde z=\lambda_1(X)$ is an immediate consequence of \eqref{iff_eig}, which we restated above. For the second part of the claim, consider, for sake of contradiction, that there was another eigenvalue of $I+\frac{1}{np}V^TR(\tilde z)V\Theta$ that was smaller (in eigenvalue ordering). Then, this eigenvalue would have to be negative, and, as all the eigenvalues converge to $1$, we could find a value of $z$ greater than $\tilde z$ where this smaller, negative eigenvalue becomes zero by continuity. We would have a $z>\tilde z$ for which the determinant equals zero, which is a contradiction. Thus, $\lambda_r(I+\frac{1}{np}V^TRV\Theta)=0$ and $\lambda_i(I+\frac{1}{np}V^TRV\Theta)>0$ for $i<r$. 
\end{proof} 

Earlier in this proof, we find \eqref{matrix_convergence}, which in turn gives us convergence of $\tilde z$ to the value $z_1=\theta_1+\frac{1}{\theta_1}$. As we are working with real symmetric matrices, we can use Weyl's Theorem to see
\begin{align*}
    \left| \lambda_i\left (I + m(\tilde z)\Theta\right)-\lambda_i \left(I+\frac{1}{np}V^TR(\tilde z)V\Theta \right) \right| \le \| \mathcal L \|_2\le  \| \mathcal L \|_F\le \Lambda ~,
\end{align*}
where $\mathcal L= m(\tilde z)\Theta-\frac{1}{np}V^TRV\Theta$, i.e. the vanishing terms that are left over after removing the expected terms from the matrix of interest. 
We refer the reader to Appendix \ref{vanishing_terms_frob_bound} to see a detailed bound on $\| \mathcal L \|_F^2$. Going forward, we use $$\Lambda = \|\Theta\|_F\sqrt{C_5r^2\max\{(\log(np))^{-2\gamma}, ~ \tau^{-1/2}\}}$$ to represent this computed bound. 

For the former matrix, we know its eigenvalues are on the diagonal. Because, for any $\tilde z>(2+\epsilon),$ we know the diagonal entries obey the inequality $1+m(\tilde z)\theta_i<1+m(\tilde z)\theta_{i+1}$, we see $\lambda_r(I+m(\tilde z)\Theta)=1+\theta_1m(\tilde z)$. Thus, we have $$|\lambda_r(I+m(\tilde z)\Theta)-\lambda_r(I+\frac{1}{np}V^TRV\Theta)|=|1+\theta_1m(\tilde z)| \le \Lambda~.$$
Therefore, because $m(z)$ is a strictly increasing function, we have $$m^{-1}((-1-\Lambda)/\theta_1) \le \tilde z \le m^{-1}((\Lambda -1)/\theta_1)~,$$
where we consider $n$ large enough so that $\Lambda\le \min\{\theta_1-1,1 \}$, ensuring the above terms fall within the domain of the inverse function. Because $m^{-1}(z)=-z-\frac{1}{z},$ we can compute these values specifically: 
$$\frac{1+\Lambda}{\theta_1}+\frac{\theta_1}{1+\Lambda} \le \tilde z \le\frac{1-\Lambda}{\theta_1}+\frac{\theta_1}{1-\Lambda}\implies \frac{\Lambda}{\theta_1}-\frac{\theta_1\Lambda}{1+\Lambda} \le \tilde z-z_1 \le\frac{-\Lambda}{\theta_1}+\frac{\theta_1\Lambda}{1-\Lambda}~.$$
Because $\Lambda \le 1,$ we can choose $C_7>1$ such that $\Lambda \le (C_7-1)/C_7$ implies
\begin{align}\label{fluctuation_bound}
    -C_7\theta_1\Lambda \le \tilde z-z_1 \le C_7\theta_1\Lambda \implies |\tilde z - z_1| \le C_7\theta_1\Lambda~.
\end{align}

Next, we can repeat this process for the next largest $\tilde z_2$ value where the determinant equals zero. A similar argument made in Claim \ref{largest_smallest_claim} can be made for this new $\tilde z_2$ value to see that $\tilde z_2=\lambda_2(X)$ could only correspond with $\lambda_r(I+\frac{1}{np}V^TRV\Theta)$ or $\lambda_{r-1}(I+\frac{1}{np}V^TRV\Theta)$. Because we have already found $\tilde z_1$ to correspond to $\lambda_r(I+\frac{1}{np}V^TRV\Theta)$, the one-to-one nature between the eigenvalues allows us to conclude $\lambda_2(X)=\lambda_{r-1}(I+\frac{1}{np}V^TRV\Theta)$, therefore allowing us to perform the above analysis once again and extend this result to the second signal and all signals thereafter. Hence, we have 
\begin{align*}
    &\mathbb P \left( \left | \lambda_i(X)-(\theta_i+\frac{1}{\theta_i}) \right| \le C_7\theta_i\Lambda \right) \\
    \ge & ~1-\max\left\{r(r-1)\exp\left(-\frac{c_1}{4C_3C_2K^4}\frac{np}{(\log(np))^{2\gamma}}\right),~  rn^{\max\{-2,-D\}},~ \exp(-C_8nq)\right\}~,
\end{align*}
for $\theta_i>1$ where the probability terms come from removing the conditioning on $\mathcal A_i$, and we redefine $C_2:=\max\{C_2,1\}$ to take into account the very similar tail bound between $\mathcal A_3$ and $\mathcal A_6$.

\end{proof-of-theorem}




%% file: sections/recover.tex
\subsection{Recovery Proof}

Next, we show the recovery result. We use as inspiration the blueprint provided by \cite{benaychgeorges2010eigenvalues}; however, since we do not have orthogonal invariance, which is crucial in their procedure, we must adopt a new strategy. 



\begin{proof-of-theorem}[\ref{eigenvector_theorem}]
    We continue with our choice of $\epsilon < \epsilon_*$ from the beginning of Theorem \ref{eigenvalue_theorem}. Without loss of generality, we perform our computations for the eigenvalue $\tilde z_1\to z_1=\theta _1+\frac{1}{\theta _1}$ and note the same result can be concluded for any choice of $z_i$ as long as $\theta _i>1$. Because $\theta_1>1$, we have $\tilde z _1\not\in(-2-\epsilon, 2+\epsilon)$ for $n$ large enough. 
    
    We continue to condition on the high probability event $\mathcal A_1 = \{\lambda_1(\frac{1}{\sqrt{nq}}W\odot A)<2+\epsilon \}$. From Theorem \ref{eigenvalue_theorem}, we have that $\tilde z_1$ is an eigenvalue of $X$ and assume $Xu_1=\tilde z_1u_1$ where $(\tilde z_1,u_1)$ is the normalized eigenpair corresponding to the recovered eigenvalue associated with $\theta _1$ in Theorem \ref{eigenvalue_theorem}. 
Observe how 
\begin{align}
    Xu_1=\tilde z_1u_1 \iff~& (\frac{1}{np}V\Theta V^T+\frac{1}{\sqrt{nq}}W\odot A)u_1=\tilde z_1 u_1 \notag \\ \iff~& u_1 = -\frac{1}{np}RV\Theta V^Tu_1 \label{u_1_rewrite}\\
    \implies~& \frac{1}{\sqrt{np}}V^Tu_1=-\frac{1}{\sqrt{np}}\frac{1}{np}V^TRV\Theta V^Tu_1 \notag \\
    \iff~& (I+\frac{1}{np}V^TRV\Theta )\frac{1}{\sqrt{np}}V^Tu_1=0~. \notag
\end{align}
Each of these matrices and vectors are $n$-dependent. The fact that $\frac{1}{\sqrt{np}}V^Tu_1\in \mathcal N(I+\frac{1}{np}V^TRV\Theta )$ implies that the limit point of $\frac{1}{\sqrt{np}}V^Tu_1$ is in the Kernel of $\text{diag}(0,1-\frac{\theta _2}{\theta _1}, ...1-\frac{\theta _r}{\theta _1})$ by continuity of matrix multiplication and \eqref{matrix_convergence}:
\begin{align*}
    I+\frac{1}{np}V^TRV\Theta \to\text{diag}\left(0,~1-\frac{\theta _2}{\theta _1}, ...,~1-\frac{\theta _r}{\theta _1}\right)~.
\end{align*}
Therefore, 
\begin{align}
    \frac{1}{\sqrt{np}}V^Tu_1\to \mathcal Z_1e_1~,\label{inner_prod_conv}
\end{align}i.e. $\langle u_1,  \frac{1}{\sqrt{np}} v_j \rangle \to 0$ for $j>1$ and $\langle u_1, \frac{1}{\sqrt{np}}v_1 \rangle \to \mathcal Z_1>0$. We aim to find the exact value for $\mathcal Z_1,$ but, to do so, we need to establish two key results. The first result involves establishing a specific rate of decay for the $j>1$ terms $\langle u_1,  \frac{1}{\sqrt{np}} v_j \rangle \to 0$. We perform perturbation analysis similar to the end of the proof of Theorem \ref{eigenvalue_theorem}. Observe the following variation on the Davis--Kahan Theorem: 
\begin{proposition}[Corollary 3 of \cite{yu2015useful}]
    Let $A$ and $B$ be symmetric, real matrices in $\mathbb{R}^{n\times n}$. For each $i\in [n]$, suppose that the smallest eigengap $\tilde \delta = \min\{|\lambda_{i-1}(A)-\lambda_i(A)|,|\lambda_i(A)-\lambda_{i+1}(A)|\}>0$. Then, the corresponding eigenvectors of $A$ and $B$ satisfy $$\min_{s\in{\{\pm1\}}}\|v_i(A)-sv_i(B)\|^2\le \frac{8\|A-B\|^2}{\tilde \delta^2},$$ where $v_i$ are the orthonormal eigenvectors of their respective matrices.
\end{proposition}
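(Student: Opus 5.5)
The plan is to derive this from a $\sin\theta$ bound in the style of Davis--Kahan, with the eigengap taken only from $A$. We then convert that angle bound into the stated bound on eigenvector distance. Throughout, write $E:=B-A$, $v:=v_i(A)$, $w:=v_i(B)$ and $\mu:=\lambda_i(B)$. Let $P:=I-vv^T$ be the orthogonal projector onto $\mathrm{span}\{v_j(A):j\neq i\}$. Because $\tilde\delta>0$, the eigenvalue $\lambda_i(A)$ is simple, so $v$ is well defined up to sign, and $P$ commutes with $A$.

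\textbf{Step 1 (reduction to an angle bound).} With $s=\mathrm{sign}\langle v,w\rangle$ (or $s=1$ if the inner product is $0$), we have
\[
\|v-sw\|^2=2-2|\langle v,w\rangle|\le 2\bigl(1-\langle v,w\rangle^2\bigr)=2\|Pw\|^2 .
\]
The inequality holds because $|\langle v,w\rangle|\le 1$ gives $1-x\le 1-x^2$ for $x=|\langle v,w\rangle|$. It therefore suffices to show $\|Pw\|\le 2\|E\|/\tilde\delta$. Separately, the left-hand side is always at most $2$. Hence if $\|E\|\ge\tilde\delta/2$ the claim is trivial, since then $8\|E\|^2/\tilde\delta^2\ge 2$. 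We may therefore assume $\|E\|<\tilde\delta/2$.

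\textbf{Step 2 (separation of $\mu$ from the rest of the spectrum of $A$).} By Weyl's inequality, $|\mu-\lambda_i(A)|\le\|E\|<\tilde\delta/2$. By the definition of $\tilde\delta$ and the ordering of the eigenvalues, every $\lambda_j(A)$ with $j\neq i$ satisfies $|\lambda_j(A)-\lambda_i(A)|\ge\tilde\delta$. Combining these gives
\[
|\lambda_j(A)-\mu|\ge\tilde\delta-\|E\|>\tilde\delta/2 \qquad\text{for all } j\neq i .
\]

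\textbf{Step 3 (the key identity).} Since $Bw=\mu w$, we have $(A-\mu I)w=-Ew$. Applying $P$ and using $PA=AP$ gives $(A-\mu I)Pw=-PEw$. The vector $Pw$ lies in the span of the eigenvectors $v_j(A)$, $j\neq i$, and on that subspace $A-\mu I$ has all eigenvalues of modulus greater than $\tilde\delta/2$, by Step 2. Hence
\[
\tfrac{\tilde\delta}{2}\|Pw\|\le\|PEw\|\le\|E\| ,
\]
that is, $\|Pw\|\le 2\|E\|/\tilde\delta$. Substituting this into Step 1 yields $\min_s\|v-sw\|^2\le 8\|E\|^2/\tilde\delta^2$.

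The only delicate point is Step 2: the gap is assumed only for $A$, not for $B$. This is handled by splitting into the trivial regime $\|E\|\ge\tilde\delta/2$ and using Weyl's inequality in the other regime. Everything else is linear algebra.
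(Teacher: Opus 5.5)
Your proof is correct. The reduction $\min_s\|v-sw\|^2=2-2|\langle v,w\rangle|\le 2\|Pw\|^2$ is valid, as is the trivial regime $\|E\|\ge\tilde\delta/2$. The Weyl separation $|\lambda_j(A)-\mu|>\tilde\delta/2$ for $j\neq i$ and the identity $(A-\mu I)Pw=-PEw$ then give $\|Pw\|\le 2\|E\|/\tilde\delta$, and hence the constant $8$. Two small remarks: you never need $\lambda_i(B)$ to be simple, and for $i\in\{1,n\}$ your Step 2 implicitly uses the conventions $\lambda_0=+\infty$ and $\lambda_{n+1}=-\infty$.

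The paper gives no argument of its own; it imports the result from \cite{yu2015useful}. Their proof takes a slightly different route. You center at the perturbed eigenvalue $\mu$; they center at the unperturbed one and write $(A-\lambda_i(A)I)w=-Ew+(\mu-\lambda_i(A))w$. Weyl then gives $\|(A-\lambda_i(A)I)w\|\le\|E\|+|\mu-\lambda_i(A)|\le 2\|E\|$. Since $(A-\lambda_i(A)I)w=(A-\lambda_i(A)I)Pw$, the gap of $A$ directly gives $\|(A-\lambda_i(A)I)w\|\ge\tilde\delta\|Pw\|$.

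In their version the factor $2$ comes from the triangle inequality, and no case split is needed. The same computation carries over essentially unchanged to blocks of eigenvectors, and, via Hoffman--Wielandt, to Frobenius-norm bounds. Your version is the classical Davis--Kahan argument, where the relevant gap is between $\mu$ and the rest of the spectrum of $A$. Converting that to a gap of $A$ alone costs you the split at $\|E\|=\tilde\delta/2$, which is where your factor $2$ reappears. Both routes yield the same bound $\sqrt{1-\langle v,w\rangle^2}\le 2\|E\|/\tilde\delta$, the same conversion $\|v-sw\|^2\le 2(1-\langle v,w\rangle^2)$, and the same final constant.
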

Using this proposition, we see
\begin{align*}
    \min_{s\in\{\pm1\}}\left\| v_i\left (I + m(\tilde z)\Theta\right)-sv_i \left(I+\frac{1}{np}V^TR(\tilde z)V\Theta \right) \right\|^2 \le \frac{8\|\mathcal L\|^2}{\tilde \delta^2}\le\frac{8\Lambda^2}{\tilde \delta^2}~,
\end{align*}
where $\mathcal L= m(\tilde z)\Theta-\frac{1}{np}V^TRV\Theta$ and $\Lambda = \|\Theta\|_F\sqrt{C_5r^2\max\{(\log(np))^{-2\gamma}, ~ \tau^{-1/2}\}}$ from Appendix \ref{vanishing_terms_frob_bound}.
We are specifically interested in $i=r$ and refer the reader to Claim \ref{largest_smallest_claim} at the end of Theorem \ref{eigenvalue_theorem} to see an argument on the eigenvalue/eigenvector ordering. With this established ordering, we have
\begin{align*}
    \min_{s\in\{\pm1\}}\left\| e_1-s\cdot\|\frac{1}{\sqrt{np}}V^Tu_1\|_2^{-1}\frac{1}{\sqrt{np}}V^Tu_1 \right\|^2 \le\frac{8\Lambda^2}{\tilde \delta^2}~,
\end{align*}
and therefore
\begin{align*}
    \min_{s\in\{\pm1\}}(\langle u_1, \frac{1}{\sqrt{np}}v_1\rangle-\|\frac{1}{\sqrt{np}}V^Tu_1\|_2s)^2+\sum_{i=2}^{r}\langle u_1, \frac{1}{\sqrt{np}}v_i\rangle^2 \le\frac{8\Lambda^2}{\tilde \delta^2}\|\frac{1}{\sqrt{np}}V^Tu_1\|_2^{2}\le\frac{8\Lambda^2}{\tilde \delta^2}r~.
\end{align*}
Notice all the terms on the left hand side are positive, so we can lower bound and establish specific control over each of these individual terms. Now, while we cannot establish specific error bounds on the $j=1$ inner product (in fact, this is the very result we are working toward) due to the scaling factor from the normalization, we do have $\langle u_1, \frac{1}{\sqrt{np}}v_1 \rangle \to \mathcal Z_1>0$, so we know it is of a constant order that does not blow up to infinity. Therefore, we can conclude
\begin{align}
    |\langle u_1, \frac{1}{\sqrt{np}}v_1 \rangle|= O(1) \text{ and } |\langle u_1, \frac{1}{\sqrt{np}}v_j\rangle|= O(\Lambda) \text{ for } j>1~. \label{eigv_fluc_1}
\end{align}

Now, moving toward finding the explicit value of $\mathcal Z_1$, we establish our second key result, which involves control over the derivative of our resolvent terms.
We introduce the following proposition, which is a restatement of Vitali's convergence theorem that allows us to extend the convergence \eqref{matrix_convergence} from Theorem \ref{eigenvalue_theorem} to the derivative of the Stieltjes transform. We prove the following for any $z\not \in(-2-\epsilon,2+\epsilon)$.

\begin{proposition}\label{stiel_deriv_conv_lemma}
    (\cite[Lemma 2.14]{baisilverstein}). Let $f_1 , f_2 , . . .$ be analytic on the domain $\mathcal D$, satisfying $|f_n(z)| \leq M$ for every $n$ and $z$ in $\mathcal D$, and $f_n(z)$ converges for each $z$ in a subset of $\mathcal D$ having a limit point in $\mathcal D$. Then there exists an analytic function $f$ on $\mathcal D$ such that $f_n(z) \to f(z)$ and  $f_n'(z) \to f'(z)$ for all $z\in \mathcal D$. Moreover, for any set bounded by a contour $\Gamma$ interior to D, the convergence is uniform and $f_n'(z)$ are uniformly bounded. 
\end{proposition}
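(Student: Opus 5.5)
The plan is to derive the statement from Montel's theorem together with the identity theorem and Cauchy's integral formula, taking $\mathcal D$ to be a domain (open and connected). The first step is normality. Since $|f_n|\le M$ on $\mathcal D$, Cauchy's estimate on a disk $\overline{B(z_0,2\rho)}\subset\mathcal D$ gives $|f_n'(z)|\le M/\rho$ on $B(z_0,\rho)$. The family $\{f_n\}$ is therefore locally equicontinuous and locally uniformly bounded. By Arzel\`a--Ascoli (applied on an exhaustion of $\mathcal D$ by compacts, with a diagonal argument), every subsequence of $(f_n)$ has a further subsequence converging uniformly on compact subsets of $\mathcal D$. By Weierstrass' theorem (Morera plus uniform convergence), each such limit is analytic on $\mathcal D$.

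The second step, which I expect to be the crux, is to show that all subsequential limits coincide. Let $g$ and $h$ be two locally uniform subsequential limits. Both agree with $\lim_n f_n(z)$ on the set $S\subset\mathcal D$ where pointwise convergence is assumed. Hence $g-h$ vanishes on $S$, which has a limit point in $\mathcal D$. Since $\mathcal D$ is connected, the identity theorem gives $g\equiv h$; call this common function $f$. A standard subsequence argument then finishes the step. If $f_n\not\to f$ uniformly on some compact $K$, there are $\eta>0$ and a subsequence with $\sup_K|f_{n_k}-f|\ge\eta$. Any further locally uniformly convergent subsequence must converge to $f$, which is a contradiction. Thus $f_n\to f$ uniformly on compact subsets of $\mathcal D$, and in particular pointwise at every $z\in\mathcal D$. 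Connectedness is the essential hypothesis here: without it, the limit on components not meeting the accumulation point of $S$ would be undetermined. In the application to $\mathcal D\supset\mathbb C\setminus[-2-\epsilon,2+\epsilon]$ this holds, and the real points of convergence $z>2+\epsilon$ accumulate inside $\mathcal D$.

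For the derivatives, fix $z\in\mathcal D$ and a circle $C=\partial B(z,\rho)\subset\mathcal D$. Cauchy's formula gives
\begin{align*}
|f_n'(z)-f'(z)| = \left|\frac{1}{2\pi i}\oint_C \frac{f_n(\zeta)-f(\zeta)}{(\zeta-z)^2}\,d\zeta\right| \le \frac{1}{\rho}\sup_{C}|f_n-f|\to 0 .
\end{align*}
The same estimate, applied uniformly over $z$ in a compact set, gives locally uniform convergence of $f_n'\to f'$.

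Finally, consider a set $G$ bounded by a contour $\Gamma$ interior to $\mathcal D$. Then $\overline G=G\cup\Gamma$ is a compact subset of $\mathcal D$, so there is $\rho>0$ with $\mathrm{dist}(\overline G,\partial\mathcal D)>2\rho$. Uniform convergence of $f_n$ on $\overline G$ follows from the previous steps. Cauchy's estimate on the circles of radius $\rho$ about points of $\overline G$ yields $\sup_{n}\sup_{z\in\overline G}|f_n'(z)|\le M/\rho$, which is the uniform bound on the derivatives.
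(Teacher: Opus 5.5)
Your proof is correct. The paper does not prove this proposition; it cites it from Bai--Silverstein, where it is deduced from Vitali's convergence theorem together with Cauchy's integral formula for derivatives. You instead give a self-contained argument: you derive Vitali's theorem from Montel's theorem (Cauchy estimates plus Arzel\`a--Ascoli) and the identity theorem, then use the same Cauchy-formula step for $f_n'$.

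The citation is shorter. What your version buys is that it shows which hypotheses carry the weight: $\mathcal D$ must be open and connected in $\mathbb C$, and $M$ must bound every $f_n$ on all of $\mathcal D$. This matters here because the paper applies the proposition with $\mathcal D=\mathbb R\setminus(-2-\epsilon,2+\epsilon)$, which is neither open in $\mathbb C$ nor connected.

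Your suggested repair, working on $\mathbb C\setminus[-2-\epsilon,2+\epsilon]$, fixes connectedness but not the uniform bound. For each finite $n$ the functions $\frac{1}{np}v_i^TR(z)v_j$ generically have poles at the noise eigenvalues, so they blow up as $z$ approaches those points of the segment. A domain that does work is $\{z\in\mathbb C:\operatorname{dist}(z,[-2-\epsilon,2+\epsilon])>\delta\}$ for a fixed $\delta>0$. It is open and connected, and it contains the real points $z>2+\epsilon+\delta$. On the event that the noise spectrum lies in $(-2-\epsilon,2+\epsilon)$ and $\frac{1}{np}\|V\|_2^2\le 100r$, one has $|\frac{1}{np}v_i^TR(z)v_j|\le 100r/\delta$ throughout it.
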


Define $f_{n,i,j}=\frac{1}{np}v_i^TRv_j$. In Theorem \ref{eigenvalue_theorem}, we showed that $f_{n,i,i}(z)\to m(z)$ and $f_{n,i,j}\to 0$ for $i\neq j$ for any value of $z\not \in (-2-\epsilon, 2+\epsilon)$. Thus, we intend to use Proposition \ref{stiel_deriv_conv_lemma}, which concludes $f_{n,i,i}'(z)\to m'(z)$ and $f_{n,i,j}'\to 0$ for $i\neq j$. To satisfy the conditions of the lemma, we analyze the behavior of the functions. 
\paragraph{$f_{n,i,j}$ Are Analytic.} First, we define the domain to be $\mathcal D = \mathbb R\backslash(-2-\epsilon, 2+\epsilon)$. Thus, our eigenvalue of consideration $z_1\in \mathcal D$. Further, fixing $\{i,j \} \in [r]^2$, $f_{n,i,j}$ are analytic functions for all $n$ seeing as we condition on the high probability event $\mathcal A_1$ where $z$ is not an eigenvalue of $\frac{1}{\sqrt{nq}}W \odot A$ by our choice of domain, meaning no $z \in \mathcal D$ can be a pole of the resolvent $R_n(z)$. 
\paragraph{$f_{n,i,j}$ Are Uniformly Bounded.} Next, we need to show $f_{n,i,j}$ is uniformly bounded. We recall and condition on the following three events, which we defined and showed to be high probability events in Theorem \ref{eigenvalue_theorem}:  
$$\mathcal A_3 = \left\{\left| \frac{1}{np} v_i^T R v_i - \frac{1}{np}\mathbb{E}_{v_i} [ v_i^T R v_i]  \right| \le (\log np)^{-\gamma} \quad \forall i\in[r]\right\}\footnote{Notice this choice of $t$ from the Hanson Wright Inequality does cause issues at $n=1$, seeing as the function blows up. However, as we are only concerned with results in an asymptotic sense, we begin our indexing at $n=2$ for our sequence of functions.}$$ 

$$\mathcal A_4 = \left\{\left| \frac{1}{np}\mathbb{E}_{v_i} [ v_i^T R v_i]  \right| \le \tilde 2C_1\tau^{-1/4} + \frac{np+\log(np)\sqrt{np}}{np} m(z) \quad \forall i\in[r]\right\}$$ 

$$\mathcal A_5 = \left\{\left| \frac{1}{np} \tilde{q}^T \tilde R \tilde{q} - \frac{1}{np}\mathbb{E}_{v_i,v_j} [ \tilde q^T \tilde R \tilde q]  \right| \le (\log np)^{-\gamma} \quad \forall \{i,j\}\in[r]^2\right\}.$$ First, consider the diagonal terms $i=j\in [r]$. 
With our conditioning on $\mathcal A_3$ and $\mathcal A_4$, we see 
    \begin{align}
        |f_{n,i,i}| \le (\log np)^{-\gamma} + 2\tilde C_1\tau^{-1/4} + \frac{np+\log(np)\sqrt{np}}{np}m(z)  =m(z)+O(\max\{(\log(np))^{-\gamma}, ~ \tau^{-1/4}\})~.\label{diagonal_terms_explicit}
    \end{align}
Thus, $|f_{n,i,i}|\le M$ for some appropriately chosen constant $M$ making use of the fact that the Stieltjes transform is a uniformly bounded function on $\mathcal D$. 

Next, consider the off-diagonal terms $i \neq j$. Using \eqref{offdiagtilderewrite} and our conditioning on $\mathcal A_4$ and $\mathcal A_5$, we see
\begin{align}
    |f_{n,i,j}|=|\frac{1}{np}v_i^TRv_j|=~&\frac{1}{2}\left|\frac{1}{np}\tilde q^T \tilde R \tilde q -\frac{1}{np}\mathbb E [\tilde q^T \tilde R \tilde q]+\frac{1}{np}\mathbb E [\tilde q^T \tilde R \tilde q] -\frac{1}{np}v_i^TRv_i-\frac{1}{np}v_j^TRv_j\right| \notag \\ 
    =~&\frac{1}{2}\left|\frac{1}{np}\tilde q^T \tilde R \tilde q -\frac{1}{np}\mathbb E [\tilde q^T \tilde R \tilde q]+ \mathbb E[v_i^TRv_i]+\frac{1}{np}\mathbb E[v_j^TRv_j] -\frac{1}{np}v_i^TRv_i-\frac{1}{np}v_j^TRv_j\right| \notag \\
    \le~&\frac{1}{np}|\tilde q^T \tilde R \tilde q -\mathbb E [\tilde q^T \tilde R \tilde q]|+\frac{1}{np}|v_i^TRv_i-\mathbb E[v_i^T Rv_i]| + \frac{1}{np}|v_j^TRv_j-\mathbb E[v_j^T Rv_j]| \notag \\
    \le~& (\log np)^{-\gamma}+2(\log np)^{-\gamma} \notag \\
    =~&3(\log np)^{-\gamma}\notag \\
    =~& O((\log(np))^{-\gamma})~.\label{offdiag_explicit}
\end{align}

Thus,  $|f_{n,i,j}|\le 3M$ for all $z \in \mathcal D$ and $n>1$ and is thus uniformly bounded. Now armed with the conditions to properly apply the proposition, we have $f_{n,i,i}(\tilde z_1)\to m'(\tilde z_1)$ and $f_{n,i,i}( z_1)\to m'( z_1)$ where $\tilde z_1 \to z_1$. We can conclude $f'_{n,i,i}(\tilde z_1)\to m'( z_1)$ if the convergence from the proposition is uniform by the Uniform Limit Theorem, which, by the proposition, we have if we carefully choose a bounded interval $\Gamma$ of $\mathbb R\backslash (-2-\epsilon,2+\epsilon)$ containing all the values of $\tilde z_1$ and $z_1$. In the spirit of this result, we can use this same bounded contour $\Gamma$ containing all of our values of $\tilde z_i$ and $z_i$ to apply the Cauchy Integral Formula to achieve concentration bounds for the derivative of our functions. Therefore, from \eqref{diagonal_terms_explicit} and \eqref{offdiag_explicit}, we have
\begin{align}
    |f'_{n,i,i}(z)-m'(z)|= O(\max\{(\log(np))^{-\gamma}, ~ \tau^{-1/4}\}) \text{ and } |f'_{n,i,j}(z)|= O((\log np)^{-\gamma}) ~,\label{eigv_fluc_2}
\end{align}
thus establishing our second and final ingredient needed to find $\mathcal Z_1$.

From \eqref{u_1_rewrite}, we know 
\begin{align*}
    -u_1=~&\frac{1}{np}RV\Theta V^Tu_1=\frac{1}{np}R\begin{bmatrix}
        \theta_1v_1 & \theta_2v_2 &  \cdots &  \theta_r v_r
    \end{bmatrix}\begin{bmatrix}
        \langle u_1, v_1 \rangle \\ \langle u_1, v_2 \rangle \\ \vdots \\ \langle u_1, v_r \rangle
    \end{bmatrix}
    =R\frac{\theta _1}{np}\langle u_1, v_1 \rangle v_1+R\sum_{j=2}^{r}\frac{\theta _j}{np}\langle u_1, v_j \rangle v_j~
\end{align*}
Thus, using \eqref{eigv_fluc_1} and  \eqref{eigv_fluc_2} and taking the norm squared of $u_1$, we see
{\small
\begin{align*}
    \|u_1\|^2 =~&\sum_{j=1}^{r}\frac{\theta _j^2}{(np)^2}|\langle u_1, v_j \rangle|^2v_j^TR^2 v_j +\frac{\theta _1}{np}\langle u_1, v_1 \rangle \sum_{j=2}^{r}\frac{\theta _j}{np}\langle u_1, v_j \rangle v_1^TR^2v_j +\sum_{j, l=2 j\neq l}^{r}\frac{\theta _j}{(np)^2}\theta _l \langle u_1, v_j \rangle \langle u_1, v_l \rangle v_j^TR^2v_l \\
    =~& \sum_{j=1}^{r}\theta _j^2|\langle u_1, \frac{1}{\sqrt{np}}v_j \rangle|^2 (-f_{n,j,j}')+\theta _1\langle u_1, \frac{1}{\sqrt{np}}v_1 \rangle \sum_{j=2}^{r}\theta _j \langle u_1, \frac{1}{\sqrt{np}} v_j \rangle (-f'_{n,1,j}) + \cdots \\ 
    ~& +\sum_{\substack{j, l=2, \\ j\not = l}}^{r} \theta _j \theta _l \langle u_1, \frac{1}{\sqrt{np}}v_j \rangle \langle u_1, \frac{1}{\sqrt{np}} v_l \rangle  (-f'_{n,j,l}) \\
    =~& \sum_{j=1}^{r}\theta _j^2|\langle u_1, \frac{1}{\sqrt{np}}v_j \rangle|^2\cdot (m'(z_1)+\mathcal M_j)+ O(1)\sum_{j=2}^{r} O(\Lambda) \cdot O((\log np)^{-\gamma})
    +\sum_{\substack{j, l=2, \\j\not = l}}^{r}  O(\Lambda) \cdot O(\Lambda)\cdot   O((\log np)^{-\gamma})  \\
    =~& \| \frac{1}{\sqrt{np}}\tilde\Theta V^Tu_1 \|_2^2 +O(\Lambda(\log np)^{-\gamma})~.
\end{align*}
}
where we let $$\tilde \Theta = \text{diag}(\theta_1\sqrt{m'(z_1)+\mathcal M_1}, ~\theta_2\sqrt{m'(z_1)+\mathcal M_2}, ~ \cdots, ~\theta_r\sqrt{m'(z_1)+\mathcal M_r})~,$$ with $\mathcal M_j=O(\max\{(\log(np))^{-\gamma}, ~ \tau^{-1/4}\})$ by \eqref{eigv_fluc_2}. As $\|u_1\|^2= 1$, we see $$\| \frac{1}{\sqrt{np}}\tilde \Theta V^Tu_1 \|_2^2=1+O(\Lambda(\log np)^{-\gamma })~.$$ 
Therefore, for our final fluctuation result, we find
{\small
\begin{align*}
    \left|\theta_1^2\langle u_1, \frac{1}{\sqrt{np}}v_1 \rangle ^2-\frac{1}{m'(z_1)} \right|=~&\frac{1}{m'(z_1)}\left|\theta_1^2m'(z_1)\langle u_1, \frac{1}{\sqrt{np}}v_1 \rangle ^2+\theta_1^2 \mathcal M_1\langle u_1, \frac{1}{\sqrt{np}}v_1 \rangle ^2-\theta_1^2 \mathcal M_1\langle u_1, \frac{1}{\sqrt{np}}v_1 \rangle ^2-1 \right| \\
    \le~&\frac{1}{m'(z_1)}\left|\| \frac{1}{\sqrt{np}}\tilde\Theta V^T u_1\|^2-\sum_{j=2}^r\left(\theta_j^2(m'(z_1)+\mathcal M_j)\langle u_1,\frac{1}{\sqrt{np}}v_j\rangle^2\right)-1 \right| + O(1)|\mathcal M_1 |\\
    \le~&\frac{1}{m'(z_1)}\left|\| \frac{1}{\sqrt{np}}\tilde\Theta V^T u_1\|^2-1 \right| +\frac{r-1}{m'(z_1)}\left(m'(z_1)+\mathcal M_{\max}\right)\cdot O(\Lambda^2) + O(1)|\mathcal M_1 |\\
    \le~& O(\Lambda (\log np)^{-\gamma})+O(\Lambda^2)+O(\max\{(\log(np))^{-\gamma}, \tau^{-1/4}\}\Lambda^2)+O(\max\{(\log(np))^{-\gamma}, \tau^{-1/4}\}) \\
    =~&O(\max\{(\log(np))^{-\gamma}, ~ \tau^{-1/4}\})~.
\end{align*}}
where we recall $\Lambda = \|\Theta\|_F\sqrt{C_5r^2\max\{(\log(np))^{-2\gamma}, ~ \tau^{-1/2}\}}$.
Therefore, with our established fluctuation result, we can solve for the derivative of the Stieltjes transform at $z_1=\theta_1+\frac{1}{\theta_1}$ to see $m'(\theta_1+\frac{1}{\theta_1})=\frac{1}{\theta_1^2-1}$. Thus,
\begin{align*}
    \begin{bmatrix}
        \langle u_1, \frac{1}{\sqrt{np}}v_1 \rangle \\ \langle u_1, \frac{1}{\sqrt{np} }v_2 \rangle \\ \vdots \\ \langle u_1, \frac{1}{\sqrt{np}}v_r \rangle
    \end{bmatrix}
    = \frac{1}{\sqrt{np}}V^Tu_1 \to \begin{bmatrix}
        \sqrt{1-\frac{1}{\theta _1^2}} \\ 0 \\ \vdots \\ 0
    \end{bmatrix} \text{ for } \theta _1>1~.
\end{align*}
Notice we then have $|\langle u_1, \frac{1}{\sqrt{np}}v_1 \rangle|^2\to1-\frac{1}{\theta_1^2}>0$. Here, $\mathcal A_1$ occurs with high probability by Lemma~\ref{semicirc_law_sparse_lemma}. Events $\mathcal A_3, \mathcal A_4,$ and $\mathcal A_5$ occur with high probability by equations \eqref{allspikesdiagonal}, \eqref{resolv_exp_control}, and \eqref{A_6_ref}, respectively. We can thus remove our conditioning and use Lemma \ref{high_prob_event} to conclude our final result holds with high probability.

We remark that a similar result holds when we allow signals to equal each other; see Appendix \ref{equal_signals}.

\end{proof-of-theorem}




%% file: sections/appendix.tex
\subsection{High Probability Conditioning}
We begin by establishing a collection of lemmas which we will use frequently throughout the work. 

First, we include the following lemma used to remove conditioning on high probability events $\mathcal A_i$. 

\begin{lemma}\label{high_prob_event}
    Let $\mathcal A$ be a high probability event, i.e. $\mathbb P (\mathcal A)\to 1$. For event $\mathcal E$, if $\mathbb P (\mathcal E|\mathcal A)\to 1$, then $\mathbb P(\mathcal E)\to 1$.
\end{lemma}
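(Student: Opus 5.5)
The approach is to decompose $\mathcal E$ according to whether $\mathcal A$ occurs, and to use that probabilities are bounded by one. By the law of total probability,
\begin{align*}
\mathbb P(\mathcal E)=\mathbb P(\mathcal E\mid\mathcal A)\,\mathbb P(\mathcal A)+\mathbb P(\mathcal E\cap\mathcal A^c)\ge \mathbb P(\mathcal E\mid\mathcal A)\,\mathbb P(\mathcal A)~.
\end{align*}
Here $\mathcal A$ and $\mathcal E$ are understood as sequences of events indexed by $n$. We only need $\mathbb P(\mathcal A)>0$ eventually, so that the conditional probability is defined, and this follows from $\mathbb P(\mathcal A)\to1$.

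Both factors on the right tend to $1$ by hypothesis, so their product tends to $1$. Combined with the trivial upper bound $\mathbb P(\mathcal E)\le 1$, the squeeze gives $\mathbb P(\mathcal E)\to1$.

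For the quantitative version used in the main theorems, the same identity gives more. Writing $\mathbb P(\mathcal A^c)\le a_n$ and $\mathbb P(\mathcal E^c\mid\mathcal A)\le e_n$, we obtain
\begin{align*}
\mathbb P(\mathcal E^c)\le \mathbb P(\mathcal E^c\cap\mathcal A)+\mathbb P(\mathcal A^c)\le e_n+a_n~.
\end{align*}
This is how the tail terms collected in $\bm f_r(n)$ arise when several conditionings $\mathcal A_1,\dots,\mathcal A_6$ are removed. One iterates the lemma, or equivalently takes a union bound over the complements.

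There is no real obstacle here. The only point of care is the measure-theoretic one: conditioning is well defined only once $\mathbb P(\mathcal A)>0$, which holds for all $n$ large enough.
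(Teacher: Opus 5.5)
Your proposal is correct and follows the same route as the paper: the law of total probability gives $\mathbb P(\mathcal E)\ge \mathbb P(\mathcal E\mid\mathcal A)\,\mathbb P(\mathcal A)\to 1$. Writing the discarded term as $\mathbb P(\mathcal E\cap\mathcal A^c)$ is a small improvement over the paper's $\mathbb P(\mathcal E\mid\mathcal A^c)\,\mathbb P(\mathcal A^c)$, since it avoids conditioning on $\mathcal A^c$ (which may have probability zero), and your quantitative bound $\mathbb P(\mathcal E^c)\le e_n+a_n$ is what actually justifies the explicit tail terms collected in $\bm f_r(n)$.
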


\begin{proof}
    Notice $\mathbb P(\mathcal E)=\mathbb P (\mathcal E| \mathcal A)\cdot \mathbb P(\mathcal A)+\mathbb P (\mathcal E| \mathcal A^c)\cdot \mathbb P(\mathcal A^c)\ge \mathbb P (\mathcal E| \mathcal A)\cdot \mathbb P(\mathcal A)\to1$.
\end{proof}

Now, we are interested in the following high probability result. When proving probabilistic results involving the spike, we often condition on the size of the support being known. 

\subsection{Support Size Concentration of Spike Vectors}

The model in \eqref{model} allows for entries of the spike to contain Bernoulli random variables with parameter $p$. Thus, the expected size of the support of $v_i$ would be $np$; however, fluctuation can occur. In order to condition on the support being fixed, we show that the fluctuation is $o(np)$ with high probability. 

\begin{lemma}\label{support_size_independent}
    For $r$ independent vectors $v_i=b_i\odot\tilde v_i$ and $np\to\infty$, we have $$\mathbb P \left(\exists i\in[r]: |\text{supp}(v_i)|\not\in \left[np-\log(np)\sqrt{np}, np+\log(np)\sqrt{np}\right] \right)\le {r}(np)^{-2}\to 0~.$$
\end{lemma}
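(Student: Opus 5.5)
The support size $|\mathrm{supp}(v_i)|$ depends only on the mask $b_i$, because $v_i=b_i\odot\tilde v_i$. Strictly speaking $(\tilde v_i)_j$ could vanish on a null set, and then the support could be smaller than that of $b_i$. I will use the convention, implicit throughout the proof of Theorem \ref{eigenvalue_theorem}, that $\mathrm{supp}(v_i)$ means $\mathrm{supp}(b_i)$.

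With that convention, $S_i:=|\mathrm{supp}(v_i)|=\sum_{j=1}^n (b_i)_j$ is distributed as $\mathrm{Bin}(n,p)$. It has mean $np$ and variance $np(1-p)\le np$. The plan is to bound the deviation probability for a single $i$ and then take a union bound over the $r$ independent spikes, in the same way as \eqref{allspikesdiagonal}. The union bound costs a factor $r$ and yields the stated $r(np)^{-2}$.

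For a single vector, set $t:=\log(np)\sqrt{np}$. The simplest route is Chebyshev:
\[
\mathbb P(|S_i-np|>t)\le \frac{np}{(\log(np))^2\, np}=(\log(np))^{-2}.
\]
This tends to $0$ but does not give the rate $(np)^{-2}$. For the stated rate I would instead use a Chernoff/Bernstein bound for sums of independent Bernoulli variables:
\[
\mathbb P(|S_i-np|>t)\le 2\exp\!\Big(-\frac{t^2}{2(np+t/3)}\Big).
\]
Since $t=o(np)$, the denominator is $2np(1+o(1))$. The exponent is therefore $\tfrac12(\log(np))^2(1+o(1))$, which exceeds $2\log(np)+\log 2$ once $np$ is large. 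Hence the single-vector probability is at most $(np)^{-2}$ for large $n$.

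Summing over $i\in[r]$ gives the bound $r(np)^{-2}\to 0$, because $np\to\infty$ by Assumption \ref{p_assumption} and $r$ is fixed.

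The only delicate step is checking that the Bernstein exponent really dominates $2\log(np)$. This holds because $(\log(np))^2/2\gg 2\log(np)$, with the inequality valid for all sufficiently large $np$; finitely many small $n$ can be absorbed since the claim is asymptotic. The measure-zero issue from vanishing sub-Gaussian entries is, if necessary, handled by the convention above. Alternatively, it suffices to note that on the event $(\tilde v_i)_j\neq 0$ for all $j$ the supports of $v_i$ and $b_i$ agree, and to assume this event has probability one, as the paper tacitly does.
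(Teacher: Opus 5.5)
Your proof is correct, but it uses a different single-vector tail estimate from the paper. Both arguments reduce to $S=\sum_j (b_i)_j\sim\mathrm{Bin}(n,p)$; the paper also silently identifies $\mathrm{supp}(v_i)$ with $\mathrm{supp}(b_i)$. Both finish with the same union bound over $i\in[r]$.

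The paper invokes the central limit theorem and replaces $\mathbb P(|S-np|>\log(np)\sqrt{np})$ by the Gaussian tail at level $\log(np)/\sqrt{1-p}$. You instead apply Bernstein's inequality, with variance at most $np$ and increments bounded by $1$.

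Your route is the rigorous one. Convergence in distribution gives no control of tail probabilities at a diverging threshold. The Berry--Esseen error, of order $(np(1-p))^{-1/2}$, is much larger than the target $(np)^{-2}$. So the CLT step alone does not deliver the stated rate without a moderate-deviation estimate, which is exactly what Bernstein/Chernoff supplies. Your bound is also non-asymptotic: it is valid once $np$ exceeds an absolute constant. It also does not need $np(1-p)\to\infty$, which the CLT requires. The paper's Gaussian heuristic buys brevity and the sharper exponent $(\log np)^2/(2(1-p))$. You could recover that exponent by keeping $np(1-p)$ in the Bernstein denominator, though it is not needed.

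Your caveat that the inequality only holds for large $np$ is necessary rather than cosmetic. For $np=3$ and $r=1$, the left side is about $0.38>1/9$.

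One small correction to your aside: the support issue is not just a null-set technicality. If $\tilde{\mathcal V}$ had an atom at $0$, the mean support size of $v_i$ would be $np\,\mathbb P((\tilde v_i)_j\neq 0)<np$, and the literal statement would fail. So the convention $\mathrm{supp}(v_i)=\mathrm{supp}(b_i)$ is genuinely needed, as it is in the paper.
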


\begin{proof}
    Let $S_n:=\sum_{j=1}^{n}|(b_i)_j|$. We apply the Central Limit Theorem to prove $P(S_n \in [np - k, np + k]) \to 1,$ as $n \to \infty$ for $k=\log(np)\sqrt{np}$:
    \begin{align}
        P\left(\frac{-k}{\sqrt{np(1-p)}}\le \frac{S_n-np}{\sqrt{np(1-p)}}\le\frac{k}{\sqrt{np(1-p)}}\right)\to 1~,
    \end{align}
    seeing as $\frac{k}{\sqrt{np(1-p)}}=\frac{\log(np)}{\sqrt{1-p}}\to \infty$ since $\frac{S_n-np}{\sqrt{np(1-p)}}\to\mathcal{N}(0,1)$ in distribution.

    Thus, with high probability, $|\text{supp}(v_i)|\in\left[np-\log(np)\sqrt{np},np+\log(np)\sqrt{np}\right]$. 

    Note we can use a standard Gaussian tail bound to see $\mathbb P (|\text{supp}(v_i)|\not \in [np-k,np+k])\to0~,$ seeing as $$\mathbb P (|\text{supp}(v_i)|\not \in [np-k,np+k])\to \mathbb P\left(|\mathcal N(0,1)|\ge \frac{k}{\sqrt{np(1-p)}}\right)~,$$ and $$\mathbb P\left(|\mathcal N(0,1)|\ge \frac{k}{\sqrt{np(1-p)}}\right)\le 2\exp \left(-(\log(np)/(1-p))^2/2\right)\le(np)^{-2}\to 0~.$$
    Choosing $k_i=\log(np)\sqrt{np}$ for all $i\in[r]$, we use a union bound to see $P (\exists i\in[r] : |\text{supp}(v_i)|\not \in [np-k_i,np+k_i])\le rP (|\text{supp}(v_i)|\not \in [np-k,np+k])\to 0$ seeing as $$r\mathbb P(|\mathcal N(0,1)|\ge \frac{k}{\sqrt{np(1-p)}})\le 2r\exp \left (-(\log(np)/(1-p))^2/2\right)\to0.$$

\end{proof}

\subsection{Norm Concentration of Sparse Spike}
\begin{theorem}\label{spike_two_norm_control}
    We have $$\mathbb P\left(\frac{1}{np}\|V\|_2^2-r\le r\frac{1}{np(\log(np)^\gamma)}+r\frac{\log(np)\sqrt{np}}{np}\right)\ge1-2r\exp\left( -\frac{c_1}{C_3K^4}\frac{np}{(\log(np))^{2\gamma}} \right)~.$$
\end{theorem}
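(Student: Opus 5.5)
The plan is to reduce the operator norm to a sum of squared column norms and then control each $\|v_i\|_2^2$ with the same Hanson--Wright argument used in the proof of Theorem \ref{eigenvalue_theorem}.

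\textbf{Step 1 (reduction).} Since $V$ has only $r$ columns,
\[
\|V\|_2^2\le \|V\|_F^2=\sum_{i=1}^r\|v_i\|_2^2 .
\]
It therefore suffices to show, for each fixed $i$,
\[
\frac{1}{np}\|v_i\|_2^2-1\le \frac{1}{np(\log(np))^\gamma}+\frac{\log(np)\sqrt{np}}{np},
\]
except on an event of probability at most $2\exp\bigl(-\frac{c_1}{C_3K^4}\frac{np}{(\log(np))^{2\gamma}}\bigr)$. A union bound over $i\in[r]$, as in \eqref{allspikesdiagonal}, then supplies both the factor $r$ on each deviation term and the factor $r$ in the probability.

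\textbf{Step 2 (expectation and centering).} I would condition on the support, exactly as with the event $\mathcal A_2$, and write $\|v_i\|_2^2=\bar v_i^T I_{\mathrm{supp}(v_i)}\bar v_i$. This is a quadratic form in the sub-Gaussian vector $\bar v_i$, which has $\|\bar v_i\|_{\psi_2}\le K$, centered entries and unit variances. Its conditional mean is $|\mathrm{supp}(v_i)|=np+k_i$ with $|k_i|\le \log(np)\sqrt{np}$. After dividing by $np$, the mean therefore accounts for the $1$ and for the term $\frac{\log(np)\sqrt{np}}{np}$ in the statement. The centered part $\|v_i\|_2^2-\mathbb E_{v_i}\|v_i\|_2^2$ is what remains to control.

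\textbf{Step 3 (Hanson--Wright).} I would apply \cite[Theorem 1.1]{rudelson2013} to the matrix $M=I_{\mathrm{supp}(v_i)}$. It satisfies $\|M\|_2=1$ and $\|M\|_F^2=np+k_i\le C_3np$. This is the computation from the proof of Theorem \ref{eigenvalue_theorem} with $R$ replaced by the identity, so $C_2$ drops out, in agreement with the probability in the statement. The deviation level $t$ must be chosen so that
\[
\min\Bigl\{\frac{t^2}{K^4C_3np},\frac{t}{K^2}\Bigr\}\ge \frac{1}{C_3K^4}\frac{np}{(\log(np))^{2\gamma}},
\]
which reproduces the stated tail $2\exp\bigl(-\frac{c_1}{C_3K^4}\frac{np}{(\log(np))^{2\gamma}}\bigr)$. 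Dividing the resulting deviation $t$ by $np$ should give the first error term in the statement. Removing the conditioning on the support size is handled by Lemma \ref{high_prob_event} together with Lemma \ref{support_size_independent}.

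\textbf{Main obstacle.} I expect the delicate step to be matching the deviation scale exactly as written. The threshold that balances the Hanson--Wright exponent is $t\asymp np/(\log(np))^\gamma$. The statement, however, records the centered error in the form $\frac{1}{np(\log(np))^\gamma}$. I would first carry out the computation with $t=np/(\log(np))^\gamma$, as in \eqref{allspikesdiagonal}, and then check carefully which normalization of $t$ (before or after division by $np$) makes the recorded term coincide with the event whose probability is bounded. Only the two-sided tail with this exponent is needed downstream, for the event $\mathcal A_6$. If the recorded term can only be reached with a smaller $t$, the exponent must be rechecked, since the tail bound degrades in that case.
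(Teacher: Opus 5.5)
Your argument is the paper's argument: $\|V\|_2^2\le\|V\|_F^2=\sum_i\|v_i\|_2^2$, conditioning on the support event $\mathcal A_2$, then \cite[Theorem 1.1]{rudelson2013} with $R=I$ (so $C_2=1$) at $t=np/(\log(np))^{\gamma}$, and a union bound over $i\in[r]$. The obstacle you flagged is real, and your Step 3 claim that dividing $t$ by $np$ gives the first error term is false. No renormalization of $t$ fixes this. Your own constraint $\frac{t^2}{K^4C_3np}\ge\frac{1}{C_3K^4}\frac{np}{(\log(np))^{2\gamma}}$ forces $t\ge np(\log(np))^{-\gamma}$. After dividing by $np$, the centered error is therefore $(\log(np))^{-\gamma}$, which is $np$ times the recorded term $\frac{1}{np(\log(np))^{\gamma}}$. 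Reaching the recorded term would need $t=(\log(np))^{-\gamma}<1$. But $\|v_i\|_2^2-|\mathrm{supp}(v_i)|=\sum_{j\in\mathrm{supp}(v_i)}\bigl((\tilde v_i)_j^2-1\bigr)$ fluctuates on the scale $\sqrt{np}$, so that centered bound fails with probability tending to one whenever $(\tilde v_i)_j^2$ is non-degenerate. The paper's proof makes the same slip in its last display: $\frac{1}{np}r\bigl((\log(np))^{-\gamma}+np+k_{\max}\bigr)$ should read $\frac{1}{np}r\bigl(np(\log(np))^{-\gamma}+np+k_{\max}\bigr)$.

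Your unconditioning step also does not give the stated probability. Lemma \ref{high_prob_event} is purely qualitative, and removing $\mathcal A_2$ adds $\mathbb P(\mathcal A_2^c)$. For $p$ bounded away from $1$ this is of order $e^{-\Theta((\log(np))^2)}$, far larger than $2r\exp\bigl(-\frac{c_1}{C_3K^4}\frac{np}{(\log(np))^{2\gamma}}\bigr)$. This is a defect of the statement itself, not only of the proof. Take a Rademacher prior, $r=1$ and $p\le 1/2$. Then $\|V\|_2^2=|\mathrm{supp}(v_1)|\sim\mathrm{Bin}(n,p)$, which exceeds $np+\log(np)\sqrt{np}+1$ with probability at least $e^{-C(\log(np))^2}$, so the inequality as written is false once $np$ is large. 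What your argument (and the paper's) actually proves is
\[
\frac{1}{np}\|V\|_2^2-r\le r(\log(np))^{-\gamma}+r\frac{\log(np)\sqrt{np}}{np}
\]
with probability at least $1-2r\exp\bigl(-\frac{c_1}{C_3K^4}\frac{np}{(\log(np))^{2\gamma}}\bigr)$ conditionally on $\mathcal A_2$. Unconditionally, the same holds after adding $\mathbb P(\mathcal A_2^c)\le r(np)^{-2}$ from Lemma \ref{support_size_independent} to the failure probability. This corrected bound is all that is used downstream, for $\mathcal A_6=\{\frac{1}{np}\|V\|_2^2\le 100r\}$. So prove it in that form rather than searching for a normalization that matches the recorded term.
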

\begin{proof}
    We apply the same procedure as the diagonal concentration section in Theorem \ref{eigenvalue_theorem} proof but using $R=I$. Then, conditioning on $\mathcal A_2$, $C_2=1$ and $||I_{\text{supp}(v_i)}||_F^2=np+k_i$. Thus,
\begin{align}
    \mathbb P \left(\exists i\in[r] :\left|  \frac{1}{np} v_i^T v_i - \frac{1}{np}\mathbb{E}_{v_i} [ v_i^T v_i]  \right| > \frac{1}{(\log(np))^{\gamma}} \right)\le~& 2r\exp\left( -\frac{c_1}{C_3K^4}\frac{np}{(\log(np))^{2\gamma}} \right) 
    \to~ 0   ~.
\end{align}
and
$\frac{1}{np}\mathbb{E}_{v_i} [ v_i^T v_i]=\frac{1}{np}I_{\text{supp}(v_i)}$. Thus, 
\begin{align*}
    \frac{1}{np}\|V\|_2^2\le\frac{1}{np}\|V\|_F^2=\frac{1}{np}\text{Tr}(V^TV)\le\frac{1}{np}\sum_{i=1}^{r}|v_i^Tv_i| 
    \le \frac{1}{np}r\left((\log(np)^{-\gamma}+ np+k_{\max}\right) ~.
\end{align*}
\end{proof}
\subsection{Bounding The Vanishing Terms}\label{vanishing_terms_frob_bound}
\begin{lemma}
    Let $\mathcal L = m(\tilde z)\Theta-\frac{1}{np}V^TR(\tilde z)V\Theta$. Then, we have $$\|\mathcal L \|_F^2 \le \| \Theta\|_F^2\left(C_5r^2\max\{(\log(np))^{-2\gamma}, ~ \tau^{-1/2}\} \right)~.$$
\end{lemma}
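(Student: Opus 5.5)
We bound $\|\mathcal L\|_F^2$ entry by entry. Since $\Theta$ is diagonal, $\mathcal L=\big(m(\tilde z)I_r-\frac{1}{np}V^TR(\tilde z)V\big)\Theta$, so
\[
\|\mathcal L\|_F^2=\sum_{i,j=1}^r \theta_j^2\,\Big|m(\tilde z)\mathbb 1_{i=j}-\tfrac{1}{np}v_i^TR(\tilde z)v_j\Big|^2 \le \|\Theta\|_F^2\, r\,\max_{i,j\in[r]}\Big|m(\tilde z)\mathbb 1_{i=j}-\tfrac{1}{np}v_i^TR(\tilde z)v_j\Big|^2 .
\]
It therefore suffices to show that each entry of $m(\tilde z)I_r-\frac1{np}V^TR(\tilde z)V$ is at most $C\max\{(\log(np))^{-\gamma},\tau^{-1/4}\}$. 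Squaring this and absorbing $r\le r^2$ and the constants into $C_5$ then gives the claim. Throughout we work on the intersection of $\mathcal A_1,\dots,\mathcal A_5$. The bound $\|R(\tilde z)\|_2\le \sqrt{C_2}$ holds because $\tilde z\notin(-2-\epsilon,2+\epsilon)$ for $n$ large, by Theorem \ref{eigenvalue_theorem}.

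\textbf{Diagonal entries.} Split
\[
\Big|\tfrac1{np}v_i^TRv_i-m\Big|\le \Big|\tfrac1{np}v_i^TRv_i-\tfrac1{np}\mathbb E_{v_i}v_i^TRv_i\Big|+\Big|\tfrac1{np}\operatorname{tr}R_{\text{supp}(v_i)}-m\Big| .
\]
The first term is at most $(\log np)^{-\gamma}$ on $\mathcal A_3$. For the second, write $\frac1{np}\operatorname{tr}R_{\text{supp}(v_i)}=\frac{np+k_i}{np}\cdot\frac{1}{np+k_i}\operatorname{tr}R_{\text{supp}(v_i)}$. The local law \eqref{resolvent_control_with_delta_plugged_in} gives $\big|\frac1{np+k_i}\operatorname{tr}R_{\text{supp}(v_i)}-m\big|\le \tilde C_1\tau^{-1/4}$. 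On $\mathcal A_2$ the prefactor satisfies $|k_i|/np\le \log(np)/\sqrt{np}$, and since $|m|\le1$ this contributes at most $\log(np)(np)^{-1/2}$. That error is $o((\log np)^{-\gamma})$ for any fixed $\gamma$.

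\textbf{Off-diagonal entries.} Use the polarization identity \eqref{offdiagtilderewrite}. It writes $\frac{2}{np}v_i^TRv_j$ as the centered quadratic form $\frac1{np}(\tilde v^T\tilde R\tilde v-\mathbb E\tilde v^T\tilde R\tilde v)$ minus the two centered diagonal forms. These are bounded by $\mathcal A_5$ and $\mathcal A_3$ respectively, giving the bound $3(\log np)^{-\gamma}$ exactly as in \eqref{offdiag_explicit}.

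\textbf{Main obstacle.} The events $\mathcal A_3,\mathcal A_5$ and the local law were established for a \emph{fixed} deterministic $z$, whereas $\tilde z$ is random and depends on $V$. This dependence breaks the Hanson--Wright step, which needs $R$ independent of $v_i$. I would handle it in three steps.
\begin{enumerate}
\item Restrict to a compact interval $\Gamma\subset\mathbb R\setminus(-2-\epsilon,2+\epsilon)$ containing all $\tilde z_i$ and $z_i$.
\item Take a grid on $\Gamma$ of mesh $(np)^{-1}$, which has polynomially many points. Apply the fixed-$z$ bounds at every grid point via a union bound. The exponential tails $\exp(-c\,np/(\log np)^{2\gamma})$ and $n^{-D}$ absorb the polynomial factor, up to adjusting constants in $\bm f_r(n)$.
\item Extend from the grid to all of $\Gamma$ using Lipschitz continuity. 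On $\mathcal A_1\cap\mathcal A_6$ we have $\|\partial_z R\|=\|R^2\|\le C_2$ and $\frac1{np}\|V\|^2\le100r$, so every entry $\frac1{np}v_i^TR(z)v_j$ is $100rC_2$-Lipschitz in $z$. The function $m$ is also Lipschitz on $\Gamma$.
\end{enumerate}
Evaluating at $z=\tilde z$ then yields the entrywise bound uniformly, and hence the lemma.
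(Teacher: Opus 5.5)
Your proof is correct, and its core is the paper's own entrywise argument. Your diagonal split reproduces the paper's $L_i$ (from $\mathcal A_3$), $M_i$ (the local law \eqref{resolvent_control_with_delta_plugged_in} averaged over $\text{supp}(v_i)$) and $N_i=|k_i\,m(\tilde z)|/np$. Your off-diagonal bound is the paper's $\tilde L_{ij}+L_i+L_j$ from \eqref{offdiagtilderewrite}. Bounding each entry before squaring even gives a factor $r$ instead of $r^2$.

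The genuine difference is how you handle the fact that $\tilde z$ is random and depends on $V$. The paper obtains $\mathcal A_3$, $\mathcal A_4$, $\mathcal A_5$ via Hanson--Wright conditionally on $W\odot A$ at a fixed deterministic $z$, and then simply evaluates them at $\tilde z$ without comment. Your net-plus-Lipschitz step therefore fills a real hole rather than adding polish. Three features make it work:
\begin{itemize}
\item The grid points are deterministic, so the fixed-$z$ bounds apply at each of them.
\item Taking the mesh to be $(np)^{-1}$ leaves only $O(np)$ points, so the union bound survives against $\exp(-c\,np/(\log np)^{2\gamma})$ even when $np\ll\log n$; the $n^{-D}$ term merely loses one power of $n$.
\item On $\mathcal A_1\cap\mathcal A_6$, the bound $\|\partial_z R\|=\|R^2\|\le C_2$ makes the discretization error $O(rC_2/np)$, which is negligible against $(\log np)^{-\gamma}$.
\end{itemize}

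The only care needed is to keep $\Gamma$ a fixed positive distance from $2+\epsilon$, so that $C_2$ is uniform on it. For example, take $\Gamma=[2+\epsilon_*,\,2+\epsilon+100r\theta_1]$. It contains the $\tilde z_i$ with high probability, since $\tilde z_i\to z_i>2+\epsilon_*$ and $\lambda_1(X)\le 2+\epsilon+100r\theta_1$ on $\mathcal A_1\cap\mathcal A_6$.

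The payoff is a bound that holds simultaneously on all of $\Gamma$. This is also what the paper tacitly needs when it passes from the pointwise convergence \eqref{matrix_convergence} to locating the zeros $\tilde z$.
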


\begin{proof}
    Let 
    \begin{align*}
        L_i=\left |\frac{1}{np}v_i^TRv_i-\mathbb E [\frac{1}{np}v_i^TRv_i]\right|, ~
        M_i =~& \left| \frac{np+k_i}{np}\mathbb E\left[ \frac{1}{np+k_i}v_i^TRv_i\right]-\frac{np+k_i}{np}m(\tilde z)\right|, 
        N_i = \left|\frac{k_i}{np}m(\tilde z)\right|~, \\  \text{ and }
        \tilde L_{ij} =~& \left |\frac{1}{np}\tilde v^T\tilde R \tilde v-\mathbb E [\frac{1}{np}\tilde v^T\tilde R\tilde v]\right|~.
    \end{align*} 
    We begin with the off-diagonal terms. From \eqref{offdiagtilderewrite} and $\mathcal A_5$, we see
    \begin{align*}
        \left |2v_iRv_j \right |^2 \le \tilde L_{ij}^2 +L_i^2+L_j^2 + \tilde L_{ij}L_i+\tilde L_{ij}L_j+L_iL_j 
        \le 6(\log np)^{-2\gamma}~.
    \end{align*}
    Next, we consider the diagonal terms. From $\mathcal A_3$ and $ \mathcal A_4,$ we see
    \begin{align*}
        \left |\frac{1}{np}v_i^TRv_i-m(\tilde z)\right|^2 \le~& L_i^2 + M_i^2+N_i^2+L_iM_i+L_iN_i+M_iN_i  \\
        \le~& (\log np)^{-2\gamma}+(2\tilde C_1\tau^{-1/4})^2+(\frac{\log (np)\sqrt{np}}{np}m(\tilde z))^2+\cdots \\
        +~& (\log np)^{-\gamma}\cdot2\tilde C_1\tau^{-1/4}+(\log np)^{-\gamma}\cdot\frac{\log (np)\sqrt{np}}{np}m(\tilde z) +\cdots \\+~&2\tilde C_1\tau^{-1/4}\cdot\frac{\log (np)\sqrt{np}}{np}m(\tilde z)~.
    \end{align*}
    The dominating term depends on the value of $\gamma>0$, the sparsity of $p$, and the sparsity of $\tau$. Thus, for values of $n$ large enough, we see there is a constant $C_5>0$ such that
    \begin{align}
        \|\mathcal L \|_F^2 \le \| \Theta \|_F^2 \left(C_5r^2\max\{(\log(np))^{-2\gamma}, ~ \tau^{-1/2}\} \right) := \Lambda^2 ~.\label{Capital_Lambda}
    \end{align}
\end{proof}

\subsection{Non-distinct Signals}\label{equal_signals}
In this section, we consider the case where the matrix $\Theta$ in (\ref{model}) has signal $\theta_i$ repeated $r_0$ times for different signal vectors.
Theorem \ref{eigenvalue_theorem} would hold as there is no reliance on the signals being distinct. However, Theorem \ref{eigenvector_theorem} needs to be updated to a slightly altered result:
\begin{theorem}
Considering the model (\ref{model}) but with $\Theta$ having signal $\theta_i$ repeated $r_0$ times, we have, under Assumption~\ref{p_assumption}, the following behavior for any unit-norm eigenvector $u_i(X)$ corresponding to the eigenvalue generated by $\theta_i$ in Theorem \ref{eigenvalue_theorem}:  
\begin{align*}
    |\langle u_i(X),v_j \rangle| \xrightarrow{\P} 
    \begin{cases}
        (w_i)_j & \theta _i > 1 \\
        0 & \theta _i \leq 1 \text{ or } j \not\in \{i, i+1, \dots i+r_0 \}
    \end{cases}
\end{align*}
where $v_j$'s are defined by \eqref{eq:def_spikes}, for all $i,j\in [r]$, and $w_i\in \text{Span}_{l=i}^{i+r_0}\{e_{l} \}$ with $||w_i||^2=1-\frac{1}{\theta_i^2}$. More specifically, for $\theta_i>1$ and for any $\gamma>0$, there exist some positive constants $c_1, C_2, C_3, C_5, C_8,$ and $D$ such that
\begin{align*}
    \mathbb P \left(  \left| \sum_{l=i}^{i+r_0}\langle u_i(X), \frac{1}{\sqrt{np}}v_l \rangle^2-(1-\frac{1}{\theta_i^2})\right| \lesssim \max\{(\log(np))^{-\gamma}, ~ \tau^{-1/4}\} \right) \ge 1-\bm f_r(n)~,
\end{align*}
and, for $j\not \in \{ i, i+1, \dots i+r_0\}$, 
\begin{align*}
    \mathbb P \left( \left | \langle u_i(X), \frac{1}{\sqrt{np}}v_j \rangle^2\right| \lesssim \Lambda^2 \right) \ge 1-\bm f_r(n)~,
\end{align*}
 hold, where $\bm f_r(n)$ is defined by \eqref{eq:rate_func} and
$$\Lambda = \|\Theta\|_F\sqrt{C_5r^2\max\{(\log(np))^{-2\gamma}, ~ \tau^{-1/2}\}}~.$$
\end{theorem}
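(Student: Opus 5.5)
The plan is to rerun the proof of Theorem \ref{eigenvector_theorem}, changing only the step that uses the distinct-signal eigengap. Everything up to the reduction survives unchanged. This includes the events $\mathcal A_1,\dots,\mathcal A_6$, the convergence \eqref{matrix_convergence}, the bound $\|\mathcal L\|_F\le\Lambda$ from Appendix \ref{vanishing_terms_frob_bound}, and the identity \eqref{u_1_rewrite}, which places $\frac{1}{\sqrt{np}}V^Tu_i$ in $\mathcal N(I+\frac{1}{np}V^TR(\tilde z_i)V\Theta)$. None of these used $\theta_j\neq\theta_l$. Let $J=\{i,\dots,i+r_0\}$ be the block of indices carrying the repeated signal $\theta_i>1$. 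Then $I+m(\tilde z_i)\Theta$ is diagonal. Its entries indexed by $J$ equal $1+m(\tilde z_i)\theta_i$, which is $O(\Lambda)$ by the eigenvalue fluctuation bound \eqref{fluctuation_bound}. Every other entry $1+m(\tilde z_i)\theta_l$, $l\notin J$, is bounded away from $0$ by a constant depending only on $\min_{l\notin J}|\theta_l-\theta_i|$.

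\textbf{Step 1 (off-block coordinates).} I replace Corollary 3 of \cite{yu2015useful} with its subspace form, the Davis--Kahan $\sin\Theta$ theorem for the spectral projector onto the cluster of eigenvalues indexed by $J$. The gap $\tilde\delta$ is now the distance from this cluster to the remaining spectrum, which is of constant order. Write $P_J$ for the coordinate projection onto $\mathrm{Span}\{e_l:l\in J\}$. Let $w=\frac{1}{\sqrt{np}}V^Tu_i$, which lies in the null space, hence in an eigenspace of the perturbed matrix for the eigenvalue $0$. That eigenvalue belongs to the perturbed cluster by Weyl's inequality. The theorem then gives $\|(I-P_J)w\|^2\le 8\Lambda^2\|w\|^2/\tilde\delta^2$. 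Since $\|w\|^2\le 100r$ on $\mathcal A_6$, this yields $\langle u_i,\frac{1}{\sqrt{np}}v_j\rangle^2\lesssim\Lambda^2$ for $j\notin J$, which is the second displayed bound.

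\textbf{Step 2 (in-block mass).} I expand $\|u_i\|^2=1$ exactly as before, using the derivative estimates \eqref{eigv_fluc_2} obtained from Proposition \ref{stiel_deriv_conv_lemma}. These give $f'_{n,j,j}=m'+\mathcal M_j$ and $f'_{n,j,l}=O((\log np)^{-\gamma})$. The cross terms now include pairs $j\neq l$ with both indices in $J$, each of size $O(1)\cdot O(1)\cdot O((\log np)^{-\gamma})$. Cross terms with at least one index outside $J$ are $O(\Lambda(\log np)^{-\gamma})$ by Step 1, and the diagonal terms with $j\notin J$ are $O(\Lambda^2)$. Because $\theta_l=\theta_i$ on $J$, the diagonal terms over $J$ collapse to $\theta_i^2(m'(z_i)+O(\mathcal M))\|P_Jw\|^2$. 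This gives
\begin{align*}
\Big|\theta_i^2 m'(z_i)\sum_{l\in J}\langle u_i,\tfrac{1}{\sqrt{np}}v_l\rangle^2-1\Big|=O(\max\{(\log np)^{-\gamma},\tau^{-1/4}\}).
\end{align*}
Substituting $m'(z_i)=1/(\theta_i^2-1)$ gives the first displayed bound. The limit vector $w_i$ is then any limit point of $P_Jw$, so $\|w_i\|^2=1-\theta_i^{-2}$. The case $\theta_i\le1$ is handled as in Theorem \ref{eigenvector_theorem}: no outlier eigenvalue is associated with such a signal.

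\textbf{Main obstacle.} The hardest part is the bookkeeping in the eigenvector ordering. Claim \ref{largest_smallest_claim} matched each $\lambda_k(X)$ to a single eigenvalue $\lambda_{r-k+1}$ of the $r\times r$ matrix. Here up to $|J|$ outlier eigenvalues of $X$ cluster within $C_7\theta_i\Lambda$ of $z_i$, and each of them must be shown to correspond to the $J$-cluster. I would handle this by applying the Step 1 argument separately at each such $\tilde z$, rather than at a single $\tilde z$, since the gap $\tilde\delta$ is uniform over $\tilde z$ in a small neighbourhood of $z_i$. Probabilities are then removed with Lemma \ref{high_prob_event}, giving the same rate $\bm f_r(n)$.
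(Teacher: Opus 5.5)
Your proposal is correct and follows the paper's route: rerun the proof of Theorem \ref{eigenvector_theorem}, now with a limit matrix that has $r_0$ zero diagonal entries, and you additionally supply two details the paper leaves implicit. First, the single-vector Davis--Kahan corollary the paper cites has zero eigengap here, so your subspace version with the cluster-to-rest gap is needed. Second, the in-block cross terms are $O((\log np)^{-\gamma})$; this is absorbed by the final rate but not by the paper's stated $O(\Lambda(\log np)^{-\gamma})$. One technicality you share with the paper: $I+\frac{1}{np}V^TRV\Theta$ is not symmetric, so before applying Weyl and Davis--Kahan you should conjugate by $\Theta^{1/2}$ and work with the null vector $\Theta^{1/2}w$, which changes only constants.
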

\begin{proof}
Without loss of generality, say $v_1, v_2, \dots, v_{r_0}$ all have signal $\theta_1$. Then, \eqref{matrix_convergence} at the end of the proof of Theorem \ref{eigenvalue_theorem} would be replaced with
\begin{align}\label{eigenvalue_matrix_conv}
    I+\frac{1}{np}V^TRV\Theta \to\text{diag}\left(0,\dots, 0, 1-\frac{\theta _2}{\theta _1}, ...1-\frac{\theta _r}{\theta _1}\right)~,
\end{align}
where the matrix leads with $r_0$ zeros on the diagonal. We again choose $(\tilde z_1,u_1)$ to be a normalized eigenpair for $X$ corresponding to the recovered eigenvalue $z_1=\theta_1+\frac{1}{\theta_1}$. The proof now follows in a similar manner to the distinct spike case (Theorem \ref{eigenvector_theorem}). 

Following the same argument, $V^Tu_1$ is in the kernel of the former term of \eqref{eigenvalue_matrix_conv}, so the limit point of $V^Tu_1$ is in the kernel of the latter matrix $\text{diag}\left(0,\dots, 0, 1-\frac{\theta _2}{\theta _1}, ...1-\frac{\theta _r}{\theta _1}\right)$, i.e. $$\frac{1}{\sqrt{np}}V^Tu_1\to w_1\in\text{span}\{e_1, \dots, e_{r_0}\}~.$$ Thus, $\langle u_1, \frac{1}{\sqrt{np}}v_j \rangle \to 0$ for $j>r_0$ and $\langle u_1, \frac{1}{\sqrt{np}}v_j \rangle \to (w_1)_j$ for $j\le r_0$. Next, one would follow the same argument made in the proof of Theorem \ref{eigenvector_theorem} to achieve slightly altered convergence bounds similar to \eqref{eigv_fluc_1} and \eqref{eigv_fluc_2}:
\begin{align*}
    |\langle u_1, \frac{1}{\sqrt{np}}v_j \rangle|= O(1) \text{ for } j\le r_0 \text{ and } |\langle u_1, \frac{1}{\sqrt{np}}v_j\rangle|= O(\Lambda) \text{ for } j>r_0~, \text{ and}
\end{align*}
\begin{align*}
    |f'_{n,i,i}(z)-m'(z)|= O(\max\{(\log(np))^{-\gamma}, ~ \tau^{-1/4}\}) \text{ and } |f'_{n,i,j}(z)|= O((\log np)^{-\gamma}) ~.
\end{align*}
Then, one would repeat the decomposition of $||u_1||^2$ and use the normalization of $u_1$ to find the same result: 
$$\| \frac{1}{\sqrt{np}}\tilde \Theta V^Tu_1 \|_2^2=1+O(\Lambda(\log np)^{-\gamma })~,$$ 
where $\tilde \Theta$ is suitably updated to reflect the repeat signals. Then, the final fluctuation would conclude in a very similar manner with minor changes: 
{\small
\begin{align*}
    \left|\theta_1^2\sum_{j=1}^{r_0}\langle u_1, \frac{1}{\sqrt{np}}v_j \rangle ^2-\frac{1}{m'(z_1)} \right|=~&\frac{1}{m'(z_1)}\left|\theta_1^2\sum_{j=1}^{r_0}(m'(z_1)+\mathcal M_j)\langle u_1, \frac{1}{\sqrt{np}}v_j \rangle ^2-\theta_1^2 \sum_{j=1}^{r_0}\mathcal M_j\langle u_1, \frac{1}{\sqrt{np}}v_j\rangle ^2-1 \right| \\
    \le~&O(1)\left|\| \frac{1}{\sqrt{np}}\tilde\Theta V^T u_1\|^2-1-\sum_{j=r_0+1}^r\theta_j^2(m'(z_1)+\mathcal M_j)\langle u_1,\frac{1}{\sqrt{np}}v_j\rangle^2 \right| + O(1)|\mathcal M_{\max} |\\
    \le~&O(1)\left|\| \frac{1}{\sqrt{np}}\tilde\Theta V^T u_1\|^2-1 \right| +O(1)\cdot\left(m'(z_1)+\mathcal M_{\max}\right)\cdot O(\Lambda^2) + O(1)|\mathcal M_{\max} |\\
    \le~& O(\Lambda (\log np)^{-\gamma})+O(\Lambda^2)+O(\max\{(\log(np))^{-\gamma}, ~ \tau^{-1/4}\}\Lambda^2)+O(\max\{(\log(np))^{-\gamma}, ~ \tau^{-1/4}\}) \\
    =~&O(\max\{(\log(np))^{-\gamma}, ~ \tau^{-1/4}\})~.
\end{align*}
} where we recall $\Lambda = \|\Theta\|_F\sqrt{C_5r^2\max\{(\log(np))^{-2\gamma}, ~ \tau^{-1/2}\}}~.$

Therefore, we recover eigenvector $u_1$, which has a nontrivial correlation with the Span$\{v_1, v_2, \dots v_{r_0} \}$, i.e. 
\begin{align*}
    \begin{bmatrix}
        \langle u_1, \frac{1}{\sqrt{np}}v_1 \rangle \\ \langle u_1, \frac{1}{\sqrt{np} }v_2 \rangle \\ \vdots \\ \langle u_1, \frac{1}{\sqrt{np}}v_r \rangle
    \end{bmatrix}
    = \frac{1}{\sqrt{np}}V^Tu_1 \to w_1 \text{ for } \theta _1>1~,
\end{align*}

where $||w_1||=\sqrt{1-\frac{1}{\theta_1^2}}>0$.

\end{proof}

\subsection{Varying Spike Sparsities}

In this section, we consider the case in which each spike vector $v_i$ has a different sparsity $p_i$. We claim Theorems \ref{eigenvalue_theorem} and \ref{eigenvector_theorem} hold when Assumption \ref{p_assumption} is updated:
\begin{assumption}\label{different_p_assumption}
    For the noise sparsity, assume $q\gtrsim \frac{(\log(n))^{1+\tau}}{n}$ for some $\tau>0$. For the spike sparsity, assume $p_i\gg \frac{1}{n}$ for all $i\in [r]$ and $$\frac{np_{\min}^2}{p_{\max}\cdot(\log(np_{\min}))^{\Theta(1)}} \to\infty~.$$ Thus, we have $np_i\to\infty$ for all $i\in [r]$ and $nq\to\infty$.
\end{assumption}
The model now needs updating to remove scaling on the outside term: 

$$\Theta = \frac{1}{n}\begin{bmatrix}\frac{1}{p_1}\theta_1 & & &  \\
& \frac{1}{p_2}\theta_2 & &  \\
& & \ddots & \\
& & & \frac{1}{p_r}\theta_r
\end{bmatrix}~.$$
The preliminary lemmas \ref{semicirc_law_sparse_lemma}, \ref{lemma_2}, and \ref{fixed_eig_noise_lemma} covered in Section \ref{prelim_lemmas} would remain unchanged as they do not depend on $p$. Lemma \ref{support_size_independent} would remain unchanged. 




Within the main proof of Theorem \ref{eigenvalue_theorem}, there are two key results we must adjust for the diagonal terms. First,  (\ref{resolv_exp_control}) would adjust to: 
\begin{align*}
    \mathbb{P}\left(\exists i \in [r] : |\frac{1}{n p_i+k_i}\text{tr}(R_n(z)_{\text{supp}(v_i)})-m(z)|> \tilde C_1\tau^{-1/4} \right)\le rn^{-D}~,
\end{align*}
where the bound did not change as it does not depend on $p$. However, for the concentration term \eqref{allspikesdiagonal}, we need to adjust the Hanson-Wright Inequality to work for any scaling of $np_i$ as we use it in both the diagonal and off-diagonal concentration. As we do in the main proof, we see $\| R_{\text{supp}(v_i)}\|_F^2\le C_3C_2\cdot np_{\max}$ and thus
\begin{align*}
    \mathbb P \left(\frac{1}{np_j} \left| v_i^T R v_i - \mathbb{E}_{v_i} [ v_i^T R v_i]  \right| > \frac{t}{np_{\min}}\right) 
    \le~& \mathbb P \left( \frac{1}{np_{\min}}\left|v_i^T R v_i - \mathbb{E}_{v_i} [ v_i^T R v_i] \right| > \frac{t}{np_{\min}}\right) \\ \le~& 2 \exp\left(-c_1\min \left\{ \frac{t^2}{K^4\cdot C_3C_2 np_{\max} }, \frac{t}{K^2\sqrt{C_2}} \right\} \right)~.
\end{align*}
Thus, we choose $t=\frac{np_{\min}}{(\log(np_{\min})^\gamma}$ and perform a union bound to see 
\begin{align*}
    \mathbb P \left(\exists \{i,j\}\in[r]^2:\frac{1}{np_j} \left| v_i^T R v_i - \mathbb{E}_{v_i} [ v_i^T R v_i]  \right| > \frac{1}{(\log(np_{\min}))^\gamma} \right) \le r(r-1)\exp\left(-\frac{c_1}{C_3C_2K^4}\frac{p_{\min}}{p_{\max}}\frac{np_{\min}}{(\log(np_{\min}))^{2\gamma}} \right)~,
\end{align*}
which thus introduces our condition of $$\frac{p_{\min}}{p_{\max}}\frac{np_{\min}}{(\log(np_{\min}))^{2\gamma}} \to\infty~.$$

As for the off-diagonal concentration, the Hanson-Wright Inequality would need to be slightly updated in a similar fashion, taking the new scaling into consideration. Based on the updated model, \eqref{offdiagtilderewrite} would update to a similar decomposition of the term $\frac{2}{np_i}v_i^TRv_j$: 
\begin{align}
    \frac{1}{np_i}2v_i^TRv_j
    :=~& \frac{1}{np_i}\left[\tilde v^T \tilde R \tilde v -v_i^TRv_i-v_j^TRv_j \right] \notag \\
    =~& \frac{1}{np_i}\left[\tilde v^T \tilde R \tilde v -\mathbb E [\tilde v^T \tilde R \tilde v]+\mathbb E [\tilde v^T \tilde R \tilde v] -v_i^TRv_i-v_j^TRv_j \right] \notag \\
    =~& \frac{1}{np_i} \left[ \tilde v^T \tilde R \tilde v -\mathbb E [\tilde v^T \tilde R \tilde v] + \mathbb E[ v_i^TRv_i]-v_i^TRv_i + \mathbb E[ v_j^TRv_j]-v_j^TRv_j \right] ~.\label{offdiagtilderewrite2}
\end{align}
For the last two terms, we would use the newly computed Hanson-Wright inequality above. 

Therefore, all that is left is to show the concentration of the first two terms of \eqref{offdiagtilderewrite2} with a Hanson-Wright Inequality once again. We would proceed as we do in the original general-$p$ proof and see $||\tilde R||_F^2\le 4C_3C_2np_{\max},$ which would bring us to the Hanson-Wright Inequality: 
\begin{align*}
    \mathbb P \left( \exists \{i,j\} \in [r]^2 : \frac{1}{np_i}\left| \tilde v^T R \tilde v - \mathbb E [\tilde v^T R \tilde v]\right| > \frac{1}{(\log(np_{\min}))^\gamma}\right) \le r(r-1) \exp\left(-\frac{c_1}{4C_3C_2K^4}\frac{p_{\min}}{p_{\max}}\frac{np_{\min}}{(\log(np_{\min}))^{2\gamma}} \right)~.
\end{align*}
Based on the updated bounds above, the high probability events $\mathcal A_3, \mathcal A_4$, and $\mathcal A_5$ would be updated accordingly for their use in the fluctuation results and proof of Theorem \ref{eigenvector_theorem}. For the latter, the proof of this theorem follows in the exact same manner as written in the case of varying $p_i$ values. Theorem \ref{spike_two_norm_control} would also be adapted in a similar way, with bounds reflecting the varying $p_i$ values. Therefore, under the new Assumption \ref{different_p_assumption}, Theorem \ref{eigenvalue_theorem} and Theorem \ref{eigenvector_theorem} would remain the exact same with only updates to the fluctuation error bounds: 
\begin{align*}
    \bm f_r(n):=\max\left\{r(r-1)\exp\left(-\frac{c_1}{4C_3C_2K^4}\frac{p_{\min}}{p_{\max}}\frac{np_{\min}}{(\log(np_{\min}))^{2\gamma}} \right), ~ rn^{\max\{-2,-D\}}, ~\exp(-C_8nq)\right\}
\end{align*}
and
\begin{align*}
    \Lambda = \|\Theta\|_F\sqrt{C_5r^2\max\{(\log(np_{\min}))^{-2\gamma}, ~ \tau^{-1/2}\}}~,
\end{align*}
where the latter was updated by trivially upper-bounding by the minimum $p_{\min}$.